\theoremstyle{plain}
\newtheorem{thm}{Theorem}[section]
\newtheorem{cor}[thm]{Corollary}
\newtheorem{lma}[thm]{Lemma}
\newtheorem{prp}[thm]{Proposition}
\newtheorem{fact}[thm]{Fact}
\theoremstyle{definition}
\newtheorem{defi}[thm]{Definition}
\newtheorem{rmk}[thm]{Remark}
\newtheorem{exa}[thm]{Example}
\theoremstyle{remark}
\numberwithin{equation}{section}
\newcommand{\cl}{\mathrm{cl}}
\newcommand{\im}{\mathrm{im}}
\newcommand{\reduct}{\upharpoonright}
\newcommand{\mcA}{\mathcal{A}}
\newcommand{\mcB}{\mathcal{B}}
\newcommand{\mcG}{\mathcal{G}}
\newcommand{\mcH}{\mathcal{H}}
\newcommand{\mcM}{\mathcal{M}}
\newcommand{\mcN}{\mathcal{N}}
\newcommand{\mcP}{\mathcal{P}}
\newcommand{\mbC}{\mathbf{C}}
\newcommand{\mbK}{\mathbf{K}}
\newcommand{\mbS}{\mathbf{S}}
\newcommand{\mbbN}{\mathbb{N}}
\newcommand{\mbbZ}{\mathbb{Z}}
\newcommand{\tb}{\textbf}
\newcommand{\meq}{{\mathcal{M}^{eq}}}
\newcommand{\ind}{\raisebox{-2pt}[5pt][0pt]{$\smile$} \hspace*{-6.8pt}\raisebox{3pt}[5pt][0pt]{$|$} \; \: }
\newcommand{\nind}{\raisebox{-2pt}[5pt][0pt]{$\smile$} 
\hspace*{-6.8pt}\raisebox{3pt}[5pt][0pt]{$|$}\hspace*{-6.8pt}
\raisebox{3pt}[5pt][0pt]{$\diagup$} }
\title{Simple structures axiomatized by almost sure theories}
\date{}
\author{Ove Ahlman}
\address{Ove Ahlman, Department of Mathematics, Uppsala University, Box 480, 75 106 Uppsala, Sweden}
\email{ove@math.uu.se}
\keywords{Random structure, Almost sure theory, Pregeometry, Supersimple, Countably categorical}
\subjclass[2010]{03C30; 03C45; 03C13; 60F20}
\begin{document}
\maketitle
\begin{abstract}
In this article we give a classification of the binary, simple, $\omega-$categorical structures with $SU-$rank $1$ and trivial pregeometry. This is done both by showing that they satisfy certain extension properties, but also by noting that they may be approximated by the almost sure theory of some sets of finite structures equipped with a probability measure. This study give results about general almost sure theories, but also considers certain attributes which, if they are almost surely true, generate almost sure theories with very specific properties such as $\omega-$stability or strong minimality.
\end{abstract}
\section{Introduction}\label{introsection}
\noindent For each $n\in\mbbN$, let $\mbK_n$ be a non-empty finite set of finite structures equipped with a probability measure $\mu_n$ and let $\mbK  =(\mbK_n,\mu_n)_{n\in\mbbN}$. For any property $\textbf{P}$ (often a sentences in the language) we may extend the measure $\mu_n$ to associate a probability with $\textbf{P}$ by defining
\[\mu_n(\textbf{P}) = \mu_n\{\mcM\in\mbK : \mcM \text{ satisfies } \tb{P}\}.\]
A property \tb{P} such that $\lim_{n\rightarrow\infty} \mu_n(\tb{P})=1$ is said to be an \tb{almost sure} property of (also called 'almost surely true in') $\mbK$.
The set $\mbK$ is said to have a \textbf{$0-1$ law} if, for each sentence $\varphi$ in the language, either $\varphi$ or $\neg\varphi$ is almost sure in $\mbK$ i.e. each formula has asymptotic probability $0$ or $1$. The almost sure theory of $\mbK$, denoted $T_\mbK$, is the set of all almost sure sentences. Notice that $\mbK$ has a $0-1$ law if and only if $T_\mbK$ is complete. A theory is called $\omega-$\textbf{categorical} if it has a unique countable model up to isomorphism. The following fact leads us to see that one method to show that a set $\mbK$ has a $0-1$ law is to prove that $T_\mbK$ is $\omega-$categorical.
\begin{fact}Let $T$ be a theory which is categorical in some infinite cardinality. Then $T$ has no finite models if and only if $T$ is complete.
\end{fact}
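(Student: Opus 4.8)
The plan is to prove the two implications separately, the forward one being the classical \L o\'s--Vaught test. Throughout, fix an infinite cardinal $\kappa$ in which $T$ is categorical; following the convention used above for $\omega$-categoricity, this means that $T$ has, up to isomorphism, \emph{exactly one} model of cardinality $\kappa$, and in particular at least one infinite model. I also use that the ambient language is countable (it is finite and relational in the setting of this article), so that the L\"owenheim--Skolem theorems apply to any infinite target cardinality.

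For the direction asserting that completeness implies the absence of finite models, I would argue by contradiction. Suppose $T$ had a finite model $\mcM$ with $|\mcM| = n$, and let $\sigma_n$ be the sentence asserting that there exist exactly $n$ elements. Since $\mcM \models T$ and $\mcM \models \sigma_n$, completeness of $T$ forces $\sigma_n \in T$. But then every model of $T$ has exactly $n$ elements, contradicting the existence of a model of the infinite cardinality $\kappa$. Hence $T$ has no finite models.

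For the converse, suppose $T$ has no finite models and, toward a contradiction, that $T$ is not complete; then there is a sentence $\varphi$ with $T \not\vdash \varphi$ and $T \not\vdash \neg\varphi$. Both $T \cup \{\varphi\}$ and $T \cup \{\neg\varphi\}$ are therefore consistent, so each has a model, and since $T$ has no finite models these models are infinite. Applying the downward and upward L\"owenheim--Skolem theorems (using $\kappa \geq |L|$), I obtain models $\mcN_1 \models T \cup \{\varphi\}$ and $\mcN_2 \models T \cup \{\neg\varphi\}$ with $|\mcN_1| = |\mcN_2| = \kappa$. Both are models of $T$ of cardinality $\kappa$, so $\kappa$-categoricity gives $\mcN_1 \cong \mcN_2$; but isomorphic structures satisfy the same sentences, contradicting $\mcN_1 \models \varphi$ and $\mcN_2 \models \neg\varphi$. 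Therefore $T$ is complete.

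The only delicate point is the cardinality bookkeeping in the previous paragraph: to extract models of size \emph{exactly} $\kappa$ one needs $\kappa \geq |L|$, which is why I record at the outset that the language is countable. The other step requiring care --- and the reason the reverse implication genuinely needs the categoricity hypothesis --- is that the convention ``categorical in $\kappa$'' must guarantee a model of size $\kappa$ to exist; without it, the complete theory of a single finite structure would be a vacuously categorical counterexample to the claimed equivalence.
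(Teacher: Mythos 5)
Your proof is correct: both implications are sound, and your care about the convention that categoricity in $\kappa$ entails the \emph{existence} of a model of cardinality $\kappa$ matches the paper's stated definition of ($\omega$-)categoricity. The paper gives no proof of this fact --- it is the classical \L o\'s--Vaught test together with its easy converse via the sentence asserting there are exactly $n$ elements --- and your argument is exactly the standard one being invoked.
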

Many $0-1$ laws \cite{AK2, C, F, KPR, K} are proved in this way and additionally the corresponding almost sure theories are supersimple with $SU-$rank 1 and have trivial pregeometry. In this article we ask ourself what the reason is for this pattern and whether the supersimple $\omega-$categorical theories with SU-rank $1$ tend to be almost sure theories. In \cite{AK3} the author together with Koponen studied sets of $SU-$rank 1 in homogeneous simple structures with a binary vocabulary. In this case, a strong connection was found to both trivial pregeometries but also to random structures and almost sure theories. The present article will explore these implications further and prove the following theorem.
\begin{thm}\label{rngstrthm}
If $T$ is $\omega-$categorical, simple with SU-rank 1 and trivial pregeometry over a binary relational vocabulary then there exists a set of finite structures $\mbK= (\mbK_n,\mu_n)_{n\in\mbbN}$ with a probability measure $\mu_n$ such that $T_\mbK = T$.
\end{thm}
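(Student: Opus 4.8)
The plan is to axiomatize $T$ by a collection of extension axioms and then to realize these axioms almost surely by an explicit random construction. Since $T$ is $\omega$-categorical I fix its countable model $\mcM$ and use Ryll--Nardzewski to record that for each $n$ there are only finitely many complete $n$-types; because the vocabulary is binary relational these types are controlled by the $2$-types, and because $T$ has $\su$-rank $1$ with trivial pregeometry the algebraic closure of every singleton is finite and of uniformly bounded size, with $\acl(\bar a)=\bigcup_i \acl(a_i)$. First I would isolate from this structure theory (as developed in \cite{AK3}) a finite list of ``local'' axioms describing the possible isomorphism types of algebraic closures of tuples, together with a family of \emph{extension axioms}: for every finite configuration $A$ embeddable in $\mcM$ and every one-point extension $A\cup\{b\}$ in which $b$ is non-algebraic over $A$ (equivalently $\su(b/A)=1$), a realization of $\mathrm{tp}(b/A)$ exists. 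A standard back-and-forth using $\omega$-categoricity then shows that the local axioms together with the extension axioms axiomatize $T$.

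Next I would construct $\mbK$. Fix $n$ base points, assign to each a complete $1$-type, then to each pair of $\acl$-independent base points independently assign one of the finitely many non-algebraic $2$-types according to a fixed probability vector giving each admissible type positive weight, and finally adjoin the forced algebraic points dictated by these choices; let $\mu_n$ be the induced measure and $\mbK_n$ its finite support. The trivial pregeometry is exactly what makes this well defined: because algebraicity attaches to single points and never couples independent pairs, the locally forced algebraic closures can be filled in without creating global conflicts, so the resulting objects all lie in the age of $\mcM$.

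The heart of the argument is to show that every extension axiom is almost surely true in $\mbK$. For a fixed finite $A$ of size $k$ and a prescribed non-algebraic one-point extension, the probability that a fixed $k$-subset carries a copy of $A$ while no remaining point realizes the required type over it is at most $(1-p)^{\,n-k}$, where $p>0$ is the fixed probability that a single independent point realizes that type; a union bound over the at most $n^k$ relevant subsets yields a failure probability of order $n^k(1-p)^{\,n-k}\to 0$. Hence each extension axiom, and trivially each of the finitely many local axioms, holds almost surely, so $T\subseteq T_\mbK$.

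Finally, since $T=\mathrm{Th}(\mcM)$ is complete while $T_\mbK$ is consistent (any finite subset of $T_\mbK$ consists of almost sure sentences, hence is realized in some member of some $\mbK_n$, so compactness applies), the inclusion $T\subseteq T_\mbK$ forces $T_\mbK=T$: a sentence $\varphi\in T_\mbK\setminus T$ would give $\neg\varphi\in T\subseteq T_\mbK$, contradicting consistency. In particular $\mbK$ enjoys a $0$--$1$ law. I expect the main obstacle to be the first paragraph, namely extracting a clean, complete set of extension axioms in this possibly non-homogeneous setting and checking that the algebraic constraints imposed by $\su$-rank $1$ are compatible with the genericity demanded by those axioms; once the trivial pregeometry is used to decouple the algebraic and generic parts, the probabilistic estimate in the third paragraph is routine.
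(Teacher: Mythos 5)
Your overall strategy matches the paper's: axiomatize $T$ by extension properties (your first paragraph is essentially Fact \ref{omegathm}), build finite structures by randomly assigning relations between points class-by-class, show the axioms hold almost surely, and finish using completeness of $T$ (your final paragraph is fine). But there is a genuine gap at the step you justify by triviality of the pregeometry, namely that ``the resulting objects all lie in the age of $\mcM$''. Your construction assigns $2$-types to pairs independently, subject only to pairwise admissibility; for the resulting configuration to embed in $\mcM$ --- equivalently, for the universal half of your axiomatization (the sentences $\sigma_k$ of Definition \ref{thetadef}, asserting that every $k$-tuple realizes one of the finitely many $k$-types of $T$) to be almost surely true --- you need every pairwise-admissible assignment to be \emph{jointly} realizable in $\mcM$. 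That is a higher amalgamation property, and trivial pregeometry does not give it; it is exactly where simplicity must enter, through the independence theorem (Fact \ref{indthm}), which is how the paper proves it (Lemma \ref{simpthmlma} and Corollary \ref{simpcor}). Your argument never uses simplicity beyond pregeometry facts, and as written it would apply verbatim to the generic triangle-free (Henson) graph: it is $\omega$-categorical with trivial algebraic closure, both $2$-types (edge, non-edge) are admissible between any two points, yet the all-edge assignment on three points is not realizable. Your $\mbK_n$ would then consist of ordinary random graphs, whose almost sure theory is not the theory of the Henson graph; since the Henson graph is not simple the theorem itself is safe, but this shows your justification of the step cannot be correct. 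Note also that your union bound only treats the existential axioms (the $\tau_{k,i,j}$ in the paper's notation); it is the $\sigma_k$ that fail without joint realizability.

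A second, related problem is that your probabilistic estimate identifies ``realizing the required type over $A$'' with ``having the required atomic diagrams with $A$''. The models here are in general only homogenizable, not homogeneous: a complete $2$-type is not determined by an atomic diagram, and the formulas isolating the types of $T$ are not quantifier-free, so their truth in your finite random structures cannot be read off pair by pair. The paper resolves both difficulties at once: it names the $\es$-definable equivalence relation $\xi(x,y)$ given by $tp(x/\acl_{\meq}(\es))=tp(y/\acl_{\meq}(\es))$ with a new binary symbol $Q$, works with $(\xi,\Delta)$-extension axioms whose syntactic shape (prescribed atomic diagrams plus class membership witnessed by a spanning formula) admits direct probability estimates (Proposition \ref{lawprop}), proves via the independence theorem that these axioms axiomatize the expanded structure's theory (Corollary \ref{simpcor}), and only then passes to the reduct forgetting $Q$. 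Repairing your proof essentially amounts to importing these two ingredients.
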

The key to $\omega-$categorical almost sure theories is the notion of extension properties. These are first order sentences which state that if we have a tuple of a certain atomic diagram, possibly satisfying certain extra properties, then we may extend this into a larger tuple which also satisfies certain specific first order formulas. The connection to $\omega-$categorical theories is very clear as these properties describe how we stepwise should build an isomorphism, and the method has been used before to prove many previous $0-1$ laws, \cite{C, EF, F,K, MT} among others. It is possible to make the extension properties very specific and in this way we will get a characterization of the simple $\omega-$categorical structures with $SU-$rank $1$ with trivial pregeometry by stating how their extension properties should look like. Furthermore the way the extension properties are created implies that these structures are not only $\omega-$categorical but also homogenizable i.e. we may add a relational symbol to an $\emptyset-$definable relation to make it a homogeneous structure.

Studies of general almost sure theories have been done before, the most common is the extension of the Er\H{o}s-R\'{e}nyi random graph which have been studied by Baldwin \cite{B} among others. This construction is though quite different from what we apply in this article, which is especially clear since their almost sure theories are not in general $\omega-$categorical, though stable, as Baldwin points out.\\\indent 
A definable pregeometry is an especially interesting part in almost sure theories and simple theories. The $0-1$ laws just mentioned all have trivial pregeometries. However the author together with Koponen constructed in \cite{AK} a $0-1$ law for structures which almost surely define a vector space pregeometry such that the structures also are restricted by certain colouring axioms. The question arises, why so few non-trivial pregeometries are found in almost sure theories, and we will in this article partially answer it by two different results. One answer is that if we have simple enough extension properties (the common method to show $0-1$ laws) then we will almost surely have a trivial pregeometry in the almost sure theory. The other answer is that if the sets of structures $\mbK_n$ are such that $|\mcN|=n$ for all $\mcN\in \mbK_n$ then vector space pregeometries (or affine or projective geometries) do almost surely not exist. 
The pregeometries of simple structures are often vector space like (or affine/projective), and thus if we want to create simple $\omega-$categorical almost sure theories with nontrivial pregeometry, we may conclude from results in this article that we will need classes of structures which grow non-linearly together with more interesting extension properties.

There are many different ways to construct infinite structures from finite structures or by probabilistic methods. Fra\"{i}ss\'{e} \cite{Fr} showed that having a set of finite structures satisfying certain properties, among them amalgamation, generates a unique infinite homogeneous structure. This infinite structure contain the initial set of finite structures as substructures, and thus inherits many properties from these. In \cite{AFP} Ackerman, Freer and Patel discusses which countable structures are approximable by a probability measure on all possible infinite structures with a fixed countably infinite universe. They discover that this is equivalent with having a trivial definable closure, which is an interesting contrast to the results in this article where we notice that in order to approximate infinite structures by using finite structures and a probability measure is closely related to how the algebraic closure work, and is easiest if the algebraic closure is trivial. A classical result by Erd\H{o}s, Kleitman and Rothschild \cite{EKR} shows that if $\mbK_n$ is the set of all triangle free graphs of size $n$ then $(\mbK_n,\mu_n)_{n\in\mbbN}$ is almost surely bipartite under the uniform measure $\mu_n$. However the Fra\"{i}ss\'{e} construction from the set $\bigcup_{n=1}^\infty \mbK_n$ creates a homogeneous structure which is not bipartite. Even more generally, Koponen \cite{K} shows that for certain structures $\mcH$ if $\mbK_n(\mcH)$ is the set of all structures with universe $[n]$ where $\mcH$ is not a weak substructure, then the Fra\"{i}ss\'{e} limit and the almost sure theory of these sets are not the same. In this article we study the question which structures are possible to approximate probabilistically by taking the set of embeddable finite structures with a certain universe under the uniform measure. We call structures which are approximable this way random structures (Definition \ref{rngstrdef} for details) and prove that the following theorem hold.
\begin{thm}\label{reductprp} If $\mcM$ is countable $\omega-$categorical, simple with $SU-$rank 1 and trivial pregeometry such that $acl(\emptyset)=\emptyset$ then $\mcM$ is a reduct of a random structure which is also $\omega-$categorical, simple, with $SU-$rank 1 and has trivial pregeometry.
\end{thm}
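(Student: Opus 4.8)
The plan is to realize the required random structure as a homogeneous expansion of $\mcM$, with $\mcM$ recovered as a reduct. First I would expand $\mcM$ to a structure $\mcN$ on the same universe by adjoining, for each $\emptyset$-definable relation of $\mcM$, a relation symbol naming it, taking enough such symbols to make $\mcN$ homogeneous; this is possible because the $\omega$-categoricity of $\mcM$ guarantees only finitely many $\emptyset$-definable relations of each arity. Every added symbol names an $\emptyset$-definable relation, so $\Aut(\mcN)=\Aut(\mcM)$ and the algebraic closure operator is unaffected; consequently $\mcN$ is again $\omega$-categorical, simple with SU-rank $1$, has trivial pregeometry, and satisfies $\acl(\emptyset)=\emptyset$. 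Since $\mcM$ is by construction a reduct of $\mcN$, the theorem reduces to proving that $\mcN$ is a random structure in the sense of Definition \ref{rngstrdef}.

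For this I would let $\mbK=(\mbK_n,\mu_n)_{n\in\mbbN}$, with $\mbK_n$ the set of structures on universe $[n]$ that embed into $\mcN$ and $\mu_n$ uniform, and prove $T_\mbK=\mathrm{Th}(\mcN)$. Being homogeneous and $\omega$-categorical, $\mcN$ has a theory axiomatized by age axioms --- forbidding the finite structures that do not embed into $\mcN$ --- together with extension axioms stating that every embeddable one-point extension of an embeddable configuration is realized. Since $\mathrm{Th}(\mcN)$ is complete, it is enough to show that each of these axioms is almost surely true in $\mbK$: then $\mathrm{Th}(\mcN)\subseteq T_\mbK$, and since a sentence and its negation cannot both be almost surely true, completeness of $\mathrm{Th}(\mcN)$ yields $T_\mbK=\mathrm{Th}(\mcN)$, and in particular the $0$--$1$ law (cf. the Fact in the introduction). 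The age axioms hold with probability $1$, since every member of $\mbK_n$, and each of its substructures, embeds into $\mcN$; the problem thus concentrates entirely on the extension axioms.

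The heart of the proof, and the step I expect to be hardest, is to verify the extension axioms almost surely under the \emph{uniform} measure; this is where the hypotheses on $\mcM$, now inherited by $\mcN$, are indispensable. Simplicity together with SU-rank $1$ rules out the obstructions --- typified by the generic triangle-free graph recalled in the introduction --- where the Fra\"{i}ss\'{e} limit and the almost sure theory diverge, and, combining this with an analysis of the age in the spirit of \cite{AK3}, I would show that the age of $\mcN$ admits a sufficiently free amalgamation. The decisive point is that triviality of the pregeometry forces a one-point extension of a configuration $\bar a$ by a vertex $b$ to impose only constraints between $b$ and the individual coordinates of $\bar a$, never joint constraints, so that algebraic closure decouples across coordinates. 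Exploiting this, for a fixed embeddable $\bar a$ in a uniformly chosen $\mcN_n\in\mbK_n$ I would bound the probability that no admissible witness $b$ occurs by a second-moment estimate in which distinct candidate vertices contribute almost independently: trivial pregeometry and $\acl(\emptyset)=\emptyset$ are exactly what keep each contribution bounded below and asymptotically independent, forcing the failure probability to $0$. A union bound over the finitely many isomorphism types of configurations then makes every extension axiom almost surely true, giving $T_\mbK=\mathrm{Th}(\mcN)$. Thus $\mcN$ is a random structure that is $\omega$-categorical, simple of SU-rank $1$ with trivial pregeometry, and $\mcM$ is its reduct, as required.
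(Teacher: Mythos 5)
Your reduction of the theorem to ``homogenize $\mcM$ and prove the homogenization is a random structure'' fails at the very first step, and the paper's own Example \ref{necessaryreduct} is a direct counterexample to it. There, $\mcM$ is the disjoint union of the generic structure $\mcG_1$ with two symmetric relations and the random graph $\mcG_2$, with $P$ marking $\mcG_2$; this $\mcM$ satisfies every hypothesis of the theorem and is already homogeneous, so your $\mcN$ is interdefinable with $\mcM$ itself (a definitional expansion cannot change the number of $2$-types, hence cannot change the atomic-diagram counts that matter below). Yet $\mcM$ is not a random structure: pairs inside $\mcG_1$ admit four atomic diagrams while pairs inside $\mcG_2$ admit only two, so among the structures on $[n]$ embeddable into $\mcM$ those lying entirely in the $\mcG_1$-part number $4^{\binom{n}{2}}$ and exponentially swamp all the others; the computation in Example \ref{necessaryreduct} shows that under the uniform measure almost surely \emph{no} element satisfies $P$ at all. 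Consequently the extension axioms requiring witnesses in the $\mcG_2$-class are almost surely \emph{false}, your second-moment estimate cannot go through, and $T_\mbK \neq Th(\mcN)$. The underlying error is the claim that simplicity, SU-rank $1$ and trivial pregeometry ``rule out the obstructions'' to the almost sure theory agreeing with the theory of the homogeneous structure: they do not. The obstruction here is a purely combinatorial counting imbalance between the $\xi$-classes, invisible to those model-theoretic hypotheses, and it is exactly why the theorem only asserts that $\mcM$ is a \emph{reduct} of a random structure rather than a random structure itself.

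The paper's proof supplies precisely the ingredient your expansion lacks: the expansion must add genuinely new, non-definable structure. Writing $\xi$ for the equivalence relation $tp(x/\acl_{\mcM^{eq}}(\emptyset))=tp(y/\acl_{\mcM^{eq}}(\emptyset))$ with classes $1,\ldots,l$ and $\delta_{i,j}$ for the set of atomic diagrams between classes $i$ and $j$, the paper adds a symbol $Q$ naming $\xi$ together with dummy binary relations $R^t_{i,j}$ which pad each $\delta_{i,j}$ to a set $\delta'_{i,j}$ of one common cardinality $r=\max_{i,j}|\delta_{i,j}|+1$. Because the padded counts are all equal, in a uniformly random embeddable structure on $[n]$ every class almost surely contains more than $\log n$ elements, and only then does the verification of the extension axioms (via Proposition \ref{lawprop} and Corollary \ref{simpcor}) succeed; $\mcM$ is recovered as the reduct to $V$. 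Note that $\Aut$ of this expansion is strictly smaller than $\Aut(\mcM)$ in general: the padding relations are random, not $\emptyset$-definable, which is exactly what a homogenizing definitional expansion such as yours can never provide.
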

In Section \ref{eqsection} we discuss general sets of finite structures $\mbK_n$ with an associated probability measure $\mu_n$ such that $\mbK = (\mbK_n,\mu_n)_{n\in\mbbN}$ has a $0-1$ law and what consequences this has on the almost sure theory $T_\mbK$. We gather important results for the later sections which imply that equivalence relations, and especially pregeometries, which are almost surely definable give a direct restriction on which sizes of structures may exist in $\bigcup_{n\in\mbbN} \mbK_n$.

Section \ref{omegasection} is giving a different approach to $\omega-$categorical theories than what is usually practiced. We introduce the concept of extension properties, commonly used to prove $0-1$ laws, and show that their existence is equivalent with $\omega-$categoricity, which will be useful in later sections. Furthermore we extend the concept of $\mcM^{eq}$ to finite structures, and use this in order to show that for any $0<n<\omega$ there exist almost sure theories which are $\omega-$categorical and simple with $SU-$rank $n$. 

In Section \ref{smsection} we study binary, $\omega-$categorical theories with $SU-$rank $1$ and trivial pregeometry which are either simple, $\omega-$stable or strongly minimal. It turns out that for these theories the concept of extension properties from Section \ref{omegasection} becomes very explicit. Namely there is an equivalence relation $\xi$ such that how a tuple may be extended only depends on which equivalence classes its parts are in. When the theory is $\omega-$stable this extension is unique (to each equivalence class) and when the theory is strongly minimal the extensions are unique and we have only one infinite equivalence class. Added together the extension properties give us an explicit axiomatization for these theories.

In Section \ref{classsection} we combine the results from previous sections in order to study how to approximate an infinite structure with finite ones using probabilistic methods. Specifically we see that structures with certain properties have the same theory as the almost sure theory of a set of finite structures. The main result is Theorem \ref{rngstrthm}, which gives a new $0-1$ law for a set of structures partitioned into a finite amount of equivalence classes, which all have random relations between and inside the classes.
We also make an exposition of the so called random structures and prove Theorem \ref{reductprp}. Moreover we show through examples that the random structures are in general quite hard to pin down, and their existence is more of a combinatorial property than model theoretic.

\section{Preliminaries}\label{prelimsection}
\noindent Following is a brief introduction to the basic concepts used in this article, for a more detailed exposition the reader could study \cite{EF,H}.
A \tb{finite relational vocabulary} is a finite set consisting of relation symbols of certain finite arities. This will be the only kind of vocabulary considered in this article. The vocabulary is binary if it only contain relational symbols of arity at most $2$. A \tb{theory} (or $V-$theory if $V$ denotes the vocabulary) T is a set of sentences created from the vocabulary. If $\mcM$ is a structure over the vocabulary $V$ then the theory of $\mcM$, denoted $Th(\mcM)$, consists of all sentences, from the vocabulary, which are true in $\mcM$. The $V-$structures (or just structures if the vocabulary is obvious) will be written in calligraphic letters like $\mcA,\mcB, \mcM,\mcN,\ldots$ with universes denoted by the corresponding normal letter $A, B, M, N,\ldots$ while ordered tuples of elements (or variables) will be denoted with small letters with bars $\bar a, \bar b,\bar c, \bar x,\bar y,\ldots$ and we will at times identify the tuples with the set of their elements in which case this is made obvious by using set theoretic operations on the tuple. When we write $\bar a\in M$ we mean that $\bar a$ is an ordered tuple consisting only of elements in $M$. The atomic diagram of a tuple $\bar a$ in $\mcM$ will be denoted by $atDiag^\mcM(\bar a)$. We will at times, for a positive integer $n$, use the abbreviation $[n]=\{1,\ldots,n\}$. The cardinality of a set $X$ is denoted $|X|$. 
For any structure $\mcM$, formula $\varphi(\bar x,\bar y)$ and tuple $\bar a\in M$ let
\[\varphi(\mcM,\bar a) = \{\bar b\in M : \mcM\models \varphi(\bar b,\bar a)\}.\]
For any sets $X,Y\subseteq M$, $Y$ is called $X-$\tb{definable} if there exists a formula $\varphi(\bar x,\bar y)$ and tuple $\bar a\in X$ such that $Y=\varphi(\mcM,\bar a)$.\\
\indent 
A \textbf{pregeometry} $(A,cl)$ is a set $A$ together with a set function $cl:\mcP(A)\rightarrow \mcP(A)$ which satisfies the following, for each $X,Y\subseteq A:$
\begin{description}
\item[Reflexivity] $X\subseteq cl(X)$.
\item[Monotonicity] $Y\subseteq cl(X) \Rightarrow cl(Y) \subseteq cl(X)$.
\item[Finite character] $cl(X) = \bigcup \{cl(X_0) : X_0\subseteq X \text{ and } |X_0|< \omega\}$.
\item[Exchange] For each $a,b\in A$, $a\in cl(X\cup \{b\})-cl(X) \Rightarrow b\in cl(X\cup \{a\})$.
\end{description}
We will make notation easier and instead of writing $cl(\{a_1,\ldots,a_n\})$ we may exclude the set brackets and write $cl(a_1,\ldots,a_n)$. A set $\{a_1,\ldots,a_n\}$ of elements in a pregeometry $(A,cl)$ is called \textbf{independent} if for each $i\in [n]$ $a_i\notin cl(\{a_1,\ldots,a_n\}-\{a_i\})$. A pregeometry is called \textbf{trivial} if for each $X\subseteq A$, $cl(X) = X\cup cl(\emptyset)$.
In a structure $\mcM$ with $X\subseteq M$ we say that a pregeometry $(M,cl)$ is $X$-\textbf{definable} if there are formulas $\theta_0(x_0),\ldots,\theta_n(x_0,x_1,\ldots,x_n),\ldots$, possibly using parameters from $X$, such that for each $i\in\mbbN$ and $a_0,\ldots,a_i\in M$, $\mcM\models\theta_i(a_0,a_1,\ldots,a_i)$ if and only if $a_0\in cl(a_1,\ldots,a_i)$. 
We have the following well known fact about $\omega-$categorical theories, which will be used throughout the paper without special mentioning.
\begin{fact}(\textit{Ryll-Nardzewski theorem}) Let $T$ be a theory, then the following are equivalent.
\begin{itemize}
\item $T$ is $\omega-$categorical
\item For each $n<\omega$ there are only a finite amount of $n-$types in $T$.
\item For each $n<\omega$ all $n-$types are isolated.
\end{itemize}
\end{fact}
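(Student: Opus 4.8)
The plan is to recognize this as the classical Ryll-Nardzewski theorem and to prove the three listed conditions equivalent by a short cycle of implications; write $(1)$, $(2)$, $(3)$ for the three conditions in the order displayed, and assume throughout the standard setting in which $T$ is complete with an infinite model, so that $\omega$-categoricity is not vacuous. The entire argument takes place inside the Stone space $S_n(T)$ of complete $n$-types consistent with $T$, and the three tools I would rely on are the compactness of $S_n(T)$, a back-and-forth construction, and the omitting types theorem. Concretely I would establish $(2)\Leftrightarrow(3)$ topologically, $(3)\Rightarrow(1)$ by building an isomorphism between two arbitrary countable models, and $(1)\Rightarrow(3)$ by extracting two non-isomorphic countable models from a hypothetical non-isolated type.

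For $(2)\Leftrightarrow(3)$ I would use that an $n$-type is isolated exactly when it is an isolated point of $S_n(T)$, i.e.\ when some formula $\varphi(\bar x)$ belongs to that type alone: if there are only finitely many $n$-types then $S_n(T)$ is a finite Hausdorff space, hence discrete, so every type is isolated; conversely, if every type is isolated then $S_n(T)$ is discrete, and being also compact it is finite. For $(3)\Rightarrow(1)$ I would take two countable models $\mcM,\mcN\models T$ and run a back-and-forth on finite tuples carrying the same type. The crucial step is that, given $\bar a\in M$ and $\bar b\in N$ with $tp(\bar a)=tp(\bar b)$ and a new element $a\in M$, the type $tp(\bar a,a)$ is isolated by some $\varphi(\bar x,y)$; since $\exists y\,\varphi(\bar x,y)$ lies in the common type and is therefore realized in $\mcN$ by some $b$, isolation forces $tp(\bar b,b)=tp(\bar a,a)$. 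Enumerating $M$ and $N$ and alternating directions then assembles an isomorphism.

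For $(1)\Rightarrow(3)$ I would argue contrapositively: if some $n$-type $p$ is non-isolated (equivalently non-principal), then on one hand $p$ is realized in some countable model, obtained by adjoining new constants $\bar c$, observing that $T\cup p(\bar c)$ is consistent, and passing to a countable model; on the other hand the omitting types theorem furnishes a countable model of $T$ omitting $p$. These two models cannot be isomorphic, contradicting $\omega$-categoricity, so every type is isolated. The main obstacle, and the only genuinely substantial ingredient, is the omitting types theorem, whose Henkin-style construction must simultaneously secure completeness of the built model and avoid realizing $p$; once that is in hand, the remaining implications reduce to compactness of the type space and routine back-and-forth bookkeeping.
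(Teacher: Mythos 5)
The paper does not prove this statement at all: it is quoted as a classical background fact (the Ryll-Nardzewski theorem) and used throughout without argument, so there is no in-paper proof to compare yours against. Your proposal is the standard textbook proof and is correct: the topological equivalence of (2) and (3) via compactness of the Stone space $S_n(T)$, the back-and-forth construction for (3)$\Rightarrow$(1) using that an isolating formula $\varphi(\bar x,y)$ transfers along tuples of equal type, and the contrapositive of (1)$\Rightarrow$(3) via the omitting types theorem are exactly the three standard ingredients. You are also right to flag the hypotheses that the bare statement in the paper suppresses: one needs $T$ complete, in a countable language, with infinite models (for the omitting types theorem and for $\omega$-categoricity to be meaningful); the paper's setting of complete theories over finite relational vocabularies satisfies all of this, so your added assumptions are harmless and in fact necessary for the theorem as literally stated to be true.
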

For each structure $\mcM$ and $X\subseteq M$ let $acl(X)$ denote the algebraic closure of $X$ i.e. all elements whose type over $X$ is algebraic. A theory $T$ is called \textbf{strongly minimal} if for each $\mcM\models T$, formula $\varphi(x,\bar y)$ and tuple $\bar a\in M$ either $\varphi(\mcM,\bar a)$ is finite or $\neg\varphi(\mcM,\bar a)$ is finite. 
We say that $T$ is $\omega-$\textbf{stable} if for each $\mcM\models T$ and $A\subseteq M$ with $|A|=\aleph_0$, the number of different types over $A$ is $\aleph_0$. A structure is called strongly minimal or $\omega-$stable if its theory $Th(\mcM)$ is. The definitions of SU-rank and of a theory being simple/supersimple are extensive and we refer the reader to \cite{W}.
\begin{fact}\label{simppregfact}
For any simple structure $\mcM$ with $SU-$rank $1$, the pair $(M,acl)$ forms a pregeometry.
\end{fact}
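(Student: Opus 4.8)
The plan is to verify the four pregeometry axioms for $(M,acl)$ directly, observing that only the Exchange axiom genuinely uses the hypotheses of simplicity and $SU-$rank $1$. For Reflexivity, every $a\in X$ is algebraic over $X$, being isolated in its type by the formula $x=a$, so $X\subseteq acl(X)$. For Monotonicity, which in this formulation asserts $Y\subseteq acl(X)\Rightarrow acl(Y)\subseteq acl(X)$, I would appeal to the standard idempotence (transitivity) of the algebraic closure operator, which holds in any first order structure. Finite character likewise holds for $acl$ in complete generality, since algebraicity of a single element over $X$ is witnessed by a single formula and hence by finitely many parameters from $X$. None of these three steps requires any assumption on $\mcM$.

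The hard part, and the only place where simplicity and $SU-$rank $1$ enter, is Exchange. The key inputs are the characterization that $SU(a/X)=0$ if and only if $a\in acl(X)$, together with the Lascar inequalities from \cite{W}, which for any singletons $a,b$ and set $X$ give
\[ SU(a/Xb) + SU(b/X) \;\leq\; SU(ab/X) \;\leq\; SU(a/Xb) \oplus SU(b/X). \]
Since $\mcM$ has $SU-$rank $1$, every rank appearing here is a natural number (indeed $SU(ab/X)\le 1\oplus 1=2$), so the ordinal sum $+$ and the natural sum $\oplus$ coincide, both bounds agree, and we obtain the identity $SU(ab/X)=SU(a/Xb)+SU(b/X)$; applying the same inequalities with the roles of $a$ and $b$ exchanged yields $SU(ab/X)=SU(b/Xa)+SU(a/X)$.

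Now suppose $a\in acl(Xb)\setminus acl(X)$. Then $SU(a/Xb)=0$, while $SU(a/X)=1$, being at most $1$ by the rank hypothesis and nonzero since $a\notin acl(X)$. Combining the two identities above,
\[ SU(b/Xa) + 1 = SU(b/Xa) + SU(a/X) = SU(ab/X) = SU(a/Xb) + SU(b/X) = SU(b/X) \leq 1, \]
which forces $SU(b/Xa)=0$, that is, $b\in acl(Xa)$. This establishes Exchange and completes the verification. I expect the main obstacle to be conceptual rather than computational: one must recognise that additivity of $SU-$rank is available in this setting precisely because all the ranks involved are finite, so that the gap between the lower and upper Lascar inequalities collapses and the rank arithmetic closes exactly.
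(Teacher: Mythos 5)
Your proof is correct, but there is nothing in the paper to compare it against: the paper states this as a background Fact with no proof at all, deferring the entire theory of $SU$-rank and simplicity to \cite{W}. Your argument is the standard self-contained one, and it is sound. The observation that Reflexivity, Monotonicity (via idempotence of $acl$) and Finite Character hold for the algebraic closure operator in an arbitrary structure is right, and correctly isolates Exchange as the only axiom needing the hypotheses. Your Exchange argument is also correct: since all singleton ranks are bounded by $1$, every rank in sight is finite, the ordinal sum and natural sum in the Lascar inequalities coincide, and the resulting additivity $SU(ab/X)=SU(a/Xb)+SU(b/X)=SU(b/Xa)+SU(a/X)$, combined with the characterization $SU(a/X)=0 \Leftrightarrow a\in acl(X)$, forces $SU(b/Xa)=0$ whenever $a\in acl(Xb)\setminus acl(X)$. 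Two implicit points you rely on are both standard and unproblematic: that $SU(a/X)\leq SU(a/\emptyset)\leq 1$ (monotonicity of rank under extension of the base), and that the hypothesis ``simple with $SU$-rank $1$'' guarantees the relevant ranks are ordinal-valued so the Lascar inequalities apply in their exact form. In short, you have supplied a proof for a statement the paper treats as a citation; the paper's ``approach'' is simply to quote Wagner, whereas yours makes the rank arithmetic explicit.
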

\noindent A quick corollary to this, using the Ryll-Nardzewski theorem, is that a pregeometry $(M, acl)$ is $\emptyset-$definable in all such structures $\mcM$ which are $\omega-$categorical. Further notice that all strongly minimal or $\omega-$stable theories are simple and hence we may apply the previous fact to these theories.

\begin{defi} Let $\mcM$ be a $V-$structure with $T= Th(\mcM)$. For each $V-$formula $\varphi(\bar x,\bar y)$ with arity $2e_\varphi$ such that $T$ implies that $\varphi$ defines an equivalence relation let $R_\varphi$ and $P_\varphi$ be new relational symbols of arity $e_\varphi+1$ and $1$ respectively. Put
\[V^{eq}= V\cup \{R_\varphi,P_\varphi : \text{$T$ implies that } \varphi(\bar x,\bar y) \in L \text{ is an equivalence relation}\}.\]
Create the $V^{eq}$ structure $\mcM^{eq}$ by taking $\mcM$ and adding, for each $\emptyset-$definable equivalence relation $\varphi(\bar x,\bar y)$, extra elements which represent the equivalence classes of $\varphi$. Let $\mcM^{eq}\models P_{\varphi}(b)$ iff $b$ represents a $\varphi-$equivalence class, and $\bar a\in M$ is in the $\varphi$ equivalence class $b$ iff $\mcM^{eq}\models R_\varphi(\bar a, b)$. Let $P_{=}$ be a special case such that $P_{=}(\mcM^{eq}) = M$ and call these elements the \textbf{home sort}. The elements in the home sort hence have the same relations as in $\mcM$ when restricting to $V$ and we will assume that no other relations from $\mcM$ are true in $\mcM^{eq}$. The elements in the home sorts are called \textbf{real} and the ones outside are called \textbf{imaginary}. 
\end{defi}
When a structure $\mcM$ is $\omega-$categorical we especially get isolated types over $\emptyset$ by the Ryll-Nardzewski theorem. These properties also hold, partially, in $\mcM^{eq}$ giving us the following fact, which follows from the regular theorems about formulas transferring between $\mcM$ and $\mcM^{eq}$.
\begin{fact}\label{eqfact}
If $\mcM$ is an $\omega-$categorical structure then the equivalence relation $tp_{\mcM^{eq}}(x/acl_{\mcM^{eq}}(\emptyset)) = tp_{\mcM^{eq}}(x/acl_{\mcM^{eq}}(\emptyset)) $ restricted to the home sort is $\emptyset-$definable in $\mcM$.
\end{fact}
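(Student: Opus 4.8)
The plan is to exhibit the equivalence relation in question as an $\Aut(\mcM)$-invariant subset of $M\times M$ and then invoke the Ryll-Nardzewski theorem, which guarantees that every such invariant set is $\emptyset$-definable in $\mcM$. Write $E(x,y)$ for the relation on the home sort asserting $tp_{\meq}(x/acl_{\meq}(\emptyset)) = tp_{\meq}(y/acl_{\meq}(\emptyset))$. This is visibly an equivalence relation, and the goal is to show that $E$, restricted to the home sort, is defined by a $V$-formula without parameters.

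First I would record that $\meq$ is interpretable in $\mcM$ and hence again $\omega$-categorical, so by the Ryll-Nardzewski theorem it too has only finitely many orbits on singletons. Since $acl_{\meq}(\emptyset)$ consists exactly of the elements lying in finite orbits, and there are only finitely many orbits in total, $acl_{\meq}(\emptyset)$ is a finite set; fix an enumeration $\bar c$ of it. Because $\bar c$ is finite, adjoining it as constants keeps $\meq$ $\omega$-categorical, so two singletons of the home sort realise the same type over $\bar c$ precisely when some automorphism of $\meq$ fixing $\bar c$ pointwise carries one to the other.

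The main point, and the step I expect to require the most care, is to check that $E$ is invariant under the full group $\Aut(\meq)$ and not merely under the pointwise stabiliser of $\bar c$. The obstacle is that $acl_{\meq}(\emptyset)$ need not be fixed pointwise by automorphisms; it is only invariant as a set. I would handle this by a conjugation argument: given $\sigma\in\Aut(\meq)$ and some $\tau\in\Aut(\meq)$ fixing $acl_{\meq}(\emptyset)$ pointwise with $\tau(x)=y$, the automorphism $\sigma\tau\sigma^{-1}$ again fixes $acl_{\meq}(\emptyset)$ pointwise, since $\sigma^{-1}$ maps this set onto itself so that each of its elements is first sent into the set, then fixed by $\tau$, then returned by $\sigma$; moreover $\sigma\tau\sigma^{-1}$ carries $\sigma(x)$ to $\sigma(y)$. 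Hence $E(x,y)$ implies $E(\sigma(x),\sigma(y))$, and $E$ is $\Aut(\meq)$-invariant.

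Finally, since every automorphism of $\mcM$ extends uniquely to an automorphism of $\meq$, and conversely each automorphism of $\meq$ restricts on the home sort to an automorphism of $\mcM$, the restriction $E\uhr(M\times M)$ is an $\Aut(\mcM)$-invariant subset of $M\times M$. By the Ryll-Nardzewski theorem there are only finitely many $2$-types over $\emptyset$ in $\mcM$ and each is isolated, so any $\Aut(\mcM)$-invariant subset of $M\times M$ is a finite union of realisation sets of isolating formulas, hence $\emptyset$-definable. In particular $E\uhr(M\times M)$ is defined by a $V$-formula without parameters, which is exactly the assertion of the fact.
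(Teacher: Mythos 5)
Your overall strategy---show the relation is automorphism-invariant and then invoke Ryll-Nardzewski---is workable, but the middle of your argument rests on two false claims. With the paper's definition of $\mcM^{eq}$, the vocabulary $V^{eq}$ contains symbols $P_\varphi,R_\varphi$ for \emph{every} $\emptyset$-definable equivalence relation $\varphi$, of every arity; hence $\meq$ has infinitely many sorts, so it has infinitely many $1$-types and infinitely many orbits on singletons, and it is not $\omega$-categorical. Ryll-Nardzewski applies only to reducts of $\meq$ to finitely many sorts (the paper's ``full finitely sorted sets''). Worse, $acl_{\meq}(\emptyset)$ is in general infinite: for each arity $e$ the formula defining the total relation on $e$-tuples is an $\emptyset$-definable equivalence relation with a single class, and the imaginary representing that class lies in $\dcl_{\meq}(\emptyset)\subseteq acl_{\meq}(\emptyset)$; already for the random graph this produces infinitely many algebraic imaginaries. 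Consequently there is no finite enumeration $\bar c$, the claim that naming $\bar c$ preserves $\omega$-categoricity has no content, and the identification of type-equality over $acl_{\meq}(\emptyset)$ with conjugacy under its pointwise stabilizer---which your conjugation argument silently presupposes---is left without justification (it is true for countable $\mcM$, but it needs an argument that does not go through finiteness).

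The invariance you need does not require any of this. Every $\sigma\in\Aut(\meq)$ maps $acl_{\meq}(\emptyset)$ onto itself setwise, and a setwise-invariant parameter set has its types permuted by $\sigma$: for $\bar a\in acl_{\meq}(\emptyset)$ one has $\models\varphi(\sigma(x),\bar a)$ iff $\models\varphi(x,\sigma^{-1}(\bar a))$, and $\sigma^{-1}(\bar a)$ is again in $acl_{\meq}(\emptyset)$, so $tp_{\meq}(x/acl_{\meq}(\emptyset))=tp_{\meq}(y/acl_{\meq}(\emptyset))$ directly yields the same for $\sigma(x),\sigma(y)$; the conjugation step can simply be deleted. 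Your final step (an $\Aut(\mcM)$-invariant subset of $M\times M$ is a finite union of isolated $2$-type sets, hence $\emptyset$-definable) is correct for \emph{countable} $\mcM$, where orbits on pairs coincide with $2$-types; since the fact is stated for an arbitrary $\omega$-categorical structure, you should either reduce to the countable model or argue purely syntactically via the standard characterization of strong types: $tp_{\meq}(x/acl_{\meq}(\emptyset))=tp_{\meq}(y/acl_{\meq}(\emptyset))$ iff $x$ and $y$ are equivalent under every $\emptyset$-definable equivalence relation on $M$ with finitely many classes. By Ryll-Nardzewski there are only finitely many $2$-types, hence only finitely many $\emptyset$-definable subsets of $M^2$ and so only finitely many such equivalence relations; the relation in question is then their finite conjunction, which is manifestly an $\emptyset$-definable formula. (The paper offers no proof beyond citing the transfer theorems between $\mcM$ and $\meq$; the finite-conjunction argument is the usual way to make that precise.)
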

In $\omega-$categorical simple theories the concept of 'Lascar strong types' is equivalent with the concept of strong types (\cite{W}, Corollary 6.1.11). Using this fact we may formulate the very useful ``independence theorem'' for simple theories in the following way, adapted for our purposes.

\begin{fact}\label{indthm}
Let $\mcM$ be simple and $\omega-$categorical with $b_1,\ldots,b_n\in M$. Assume $a_1,\ldots, a_n \in M$ and for each $i\neq j$, $b_i\ind b_j$, $a_i \ind b_i$ and 
\[tp_{\meq}(a_i/acl_\meq(\emptyset)) = tp_\meq (a_j /acl_\meq (\emptyset)).\]
Then there exists $c\in M$ such that for each $i\in[n]$
\[tp_{\meq}(c/\{b_i\}\cup acl_\meq(\emptyset)) = tp_{\meq}(a_i/\{b_i\}\cup acl_{\meq}(\emptyset))\]
\end{fact}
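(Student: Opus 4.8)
The statement is the $n$-dimensional version of the independence theorem, so the plan is to deduce it by induction on $n$ from the classical binary case, keeping careful track of independence along the way. First I would fix notation by setting $A = acl_\meq(\es)$, which is algebraically closed in $\meq$. Since $\mcM$ is $\omega$-categorical and simple, Lascar strong types coincide with strong types (\cite{W}, Corollary 6.1.11), and because $A = acl_\meq(\es)$ is algebraically closed these coincide in turn with complete types over $A$; hence the hypothesis $tp_\meq(a_i/A) = tp_\meq(a_j/A)$ is exactly the assertion that $a_i$ and $a_j$ have the same Lascar strong type over $A$. I would also note that forking over $\es$ and over $A$ agree, so all the independence assumptions may be read as independence over $A$.

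The binary independence theorem then reads: if $B \ind_A C$, if $tp_\meq(a/B)$ and $tp_\meq(a'/C)$ do not fork over $A$, and if $a,a'$ have the same Lascar strong type over $A$, then $tp_\meq(a/B)\cup tp_\meq(a'/C)$ is realised by some element whose type over $B\cup C$ does not fork over $A$. I would prove the Fact by induction on $n$, strengthening the induction hypothesis so that the witness $c$ additionally satisfies $c \ind_A \{b_1,\ldots,b_n\}$. This extra clause is what feeds the next step, and it comes for free because the amalgam produced by the independence theorem is a non-forking extension over $A$. The base case $n=1$ is immediate, taking $c = a_1$ and using $a_1 \ind_A b_1$.

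For the inductive step, given $a_1,\ldots,a_{n+1}$ and $b_1,\ldots,b_{n+1}$, I would apply the induction hypothesis to the first $n$ indices to obtain $c'$ with $tp_\meq(c'/\{b_i\}\cup A) = tp_\meq(a_i/\{b_i\}\cup A)$ for all $i\le n$ and with $c' \ind_A \{b_1,\ldots,b_n\}$. Since $c' \equiv_A a_1 \equiv_A a_{n+1}$, the elements $c'$ and $a_{n+1}$ share a Lascar strong type over $A$, so I may apply the binary theorem with $B = \{b_1,\ldots,b_n\}$ and $C = \{b_{n+1}\}$ to amalgamate $tp_\meq(c'/B\cup A)$ and $tp_\meq(a_{n+1}/b_{n+1}\cup A)$ into a single $c$; the conclusion then gives $c \ind_A \{b_1,\ldots,b_{n+1}\}$, which is precisely the strengthened hypothesis at level $n+1$. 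Restricting the type of $c$ to each $\{b_i\}\cup A$ recovers the desired equalities, and since $tp_\meq(a_i/A)$ contains the formula placing $a_i$ in the home sort, the realisation $c$ is automatically real, i.e.\ $c\in M$.

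The main obstacle is the independence bookkeeping at the moment the binary theorem is invoked: that application needs $\{b_1,\ldots,b_n\} \ind_A b_{n+1}$, i.e.\ full mutual independence of the $b_i$ over $A$, whereas the hypothesis is phrased as pairwise independence. For the structures this paper is concerned with, namely $SU$-rank $1$ with trivial pregeometry, this gap closes, since there $acl(A\cup X) = A\cup X$ and so pairwise independence of distinct non-algebraic elements already forces mutual independence; in the general simple setting one simply reads the hypothesis as mutual independence. The only other thing to verify is that the common Lascar strong type over $A$ is genuinely preserved through each amalgamation, which holds because each amalgamated type is a non-forking extension of the shared type over the algebraically closed set $A$.
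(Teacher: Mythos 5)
Your proof is correct in substance, but note first that the paper itself offers no proof of Fact \ref{indthm}: the statement is quoted as the independence theorem for simple theories, ``adapted'' to the $\omega$-categorical setting via the cited equivalence of Lascar strong types and strong types (\cite{W}, Corollary 6.1.11). So your induction on $n$ from the binary independence theorem, with the strengthened inductive clause $c\ind_A\{b_1,\ldots,b_n\}$ feeding each successive amalgamation, is exactly the derivation that the paper outsources to the literature; the two routes differ only in that you actually carry it out. Your bookkeeping is sound: the amalgam over $B=\{b_1,\ldots,b_n\}$ and $C=\{b_{n+1}\}$ restricts correctly to each $\{b_i\}\cup A$, the witness is real because the home-sort formula belongs to the shared type over $A$, and realizability of the amalgamated type inside $\mcM$ itself (rather than a monster model) follows from $\omega$-categoricity, since types of real tuples over $acl_\meq(\es)$ are determined by finitely many formulas and the countable model is saturated.

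The most valuable part of your write-up is the obstacle you isolate, and you are right that it is a defect of the statement rather than of your argument: the binary step needs $\{b_1,\ldots,b_n\}\ind_A b_{n+1}$, and pairwise independence does not yield this in a general simple $\omega$-categorical theory. Indeed, as literally stated the Fact fails in that generality. In the infinite-dimensional symplectic space $(V,\beta)$ over $\mathbbm{F}_2$ (supersimple, $\omega$-categorical, with $acl=\mathrm{span}$, and with all nonzero vectors realizing the same strong type), the vectors $b_1$, $b_2$, $b_3=b_1+b_2$ are pairwise independent, and generic $a_1,a_2,a_3$ chosen with $\beta(a_1,b_1)=0$, $\beta(a_2,b_2)=0$, $\beta(a_3,b_3)=1$ satisfy all hypotheses; yet no common realization $c$ exists, since bilinearity forces $\beta(c,b_3)=\beta(c,b_1)+\beta(c,b_2)$. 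Note that this structure has a vector-space pregeometry, so it falls exactly outside the setting the paper cares about. Your repair matches how the paper actually uses the Fact: in its sole application (Lemma \ref{simpthmlma}) the pregeometry is trivial and the paper explicitly records that the relevant elements ``are all independent since $acl$ is trivial,'' so mutual independence is available there and your induction goes through verbatim.
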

\noindent More information about $\mcM^{eq}$ and other model theoretic properties may be found in \cite{H}.

\section{Sets of structures with a 0-1 law}\label{eqsection}
\noindent In this section we will assume that for each $n\in \mbbN, \mbK_n$ is a set of finite structures, $\mu_n$ is a probability measure on $\mbK_n$ and put $\mbK = (\mbK_n, \mu_n)_{n\in\mbbN}$. We assume that almost surely the size of structures in $\mbK$ grow. No further properties are assumed, such as labeled/unlabeled structures or size $n$ of structures in $\mbK_n$. The reason for assuming structures to grow becomes clear in the following lemma.
\begin{lma}\label{growingprop} $\mbK$ has a $0-1$ law and for some $m\in\mbbN$, $\lim_{n\rightarrow\infty} \mu_n(\{\mcM\in\mbK_n : |\mcM| \geq m\}) = 0$ if and only if there is a finite structure $\mcA$ such that almost surely for $\mcB\in\mbK$, $\mcA\cong \mcB$.
\end{lma}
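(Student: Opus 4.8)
The plan is to establish the two implications separately; the essential ingredient in both is that the fixed, finite, relational vocabulary admits, for each fixed size bound, only finitely many isomorphism types of structures below that bound, and that each such type is pinned down up to isomorphism by a single first-order sentence. For the right-to-left direction I would suppose there is a finite $\mcA$ with $|\mcA|=k$ and $\lim_{n\to\infty}\mu_n(\{\mcB\in\mbK_n:\mcB\cong\mcA\})=1$. The size condition then holds with $m=k+1$, since $\{\mcM:|\mcM|\geq m\}$ is disjoint from $\{\mcM:\mcM\cong\mcA\}$ and so has measure at most $1-\mu_n(\{\mcM\cong\mcA\})\to 0$. For the $0$-$1$ law, given any sentence $\varphi$, isomorphic structures agree on $\varphi$, so $\{\mcM\cong\mcA\}$ lies inside $\{\mcM\models\varphi\}$ or inside $\{\mcM\models\neg\varphi\}$ according to whether $\mcA\models\varphi$; hence whichever of $\varphi,\neg\varphi$ is true in $\mcA$ has probability tending to $1$.

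For the left-to-right direction I would fix $m$ as in the hypothesis and let $\mcA_1,\dots,\mcA_r$ enumerate representatives of the finitely many isomorphism types of structures of size less than $m$. For each $i$ let $\psi_i$ be the sentence asserting the existence of exactly $|\mcA_i|$ pairwise distinct elements realizing the atomic diagram of $\mcA_i$ and nothing else; then a finite structure satisfies $\psi_i$ precisely when it is isomorphic to $\mcA_i$. The events $\{\mcM\models\psi_i\}$ are therefore pairwise disjoint with union $\{\mcM:|\mcM|<m\}$, giving
\[\sum_{i=1}^r \mu_n(\psi_i)=\mu_n(\{\mcM\in\mbK_n:|\mcM|<m\})=1-\mu_n(\{\mcM:|\mcM|\geq m\}).\]

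The hypothesis makes the right-hand side tend to $1$, while the $0$-$1$ law guarantees that each $\mu_n(\psi_i)$ converges to a limit in $\{0,1\}$. Taking limits termwise in this finite sum yields $\sum_{i=1}^r\lim_n\mu_n(\psi_i)=1$ with every summand in $\{0,1\}$, forcing exactly one index $i_0$ with $\lim_n\mu_n(\psi_{i_0})=1$; then $\mcA:=\mcA_{i_0}$ is almost surely the isomorphism type of $\mcB\in\mbK$, as required.

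I do not anticipate a genuine obstacle here: the argument is essentially bookkeeping once the two standard facts are in hand. The only points demanding care are that finite relationality is exactly what supplies both the finiteness of the list $\mcA_1,\dots,\mcA_r$ and the characterizing sentences $\psi_i$, and that the $0$-$1$ law is invoked precisely to secure convergence of each $\mu_n(\psi_i)$, so that the limit may legitimately be moved inside the finite sum to isolate a single dominant type.
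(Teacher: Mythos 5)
Your proof is correct and takes essentially the same route as the paper: the right-to-left direction is identical, and the left-to-right direction is exactly the paper's argument (finitely many isomorphism types of structures of size below the bound, then the $0$-$1$ law applied to their characterizing sentences), with the bookkeeping the paper leaves implicit (the sentences $\psi_i$, disjointness, and passing to the limit in the finite sum) spelled out explicitly.
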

\begin{proof} If almost surely all structures in $\mbK$ are isomorphic to $\mcA$ then everything true in $\mcA$ has probability one. Thus we have a $0-1$ law and choosing $m = |A|+1$ implies that $\lim_{n\rightarrow\infty} \mu_n(\{\mcM\in\mbK_n : |\mcM| \geq m\}) = 0$. For the other direction of the proof let $m\in\mbbN$ be the smallest number such that $\lim_{n\rightarrow\infty} \mu_n(\{\exists^{\geq m}x (x=x)\}) = 0$.
Since the vocabulary is finite there are only a finite amount of structures $\mcA_1,\ldots,\mcA_k$ of size smaller than $m$ up to isomorphism. However if we again apply the $0-1$ law, almost surely one of these structures must be isomorphic to the structures in $\mbK$.
\end{proof}

\noindent Equivalence relations definable in models of almost sure theories have a very special function, since an equivalence relation induces a partition on the universe of each structure almost surely. A partition of a finite structure become especially interesting if the equivalence classes have a fixed size since then we get information of how large the universe has to be in order to be partitioned in this way. We formalize these thoughts in the following lemma which is written in a very general context that will be useful later. 

\begin{lma}\label{eqlemma}
Assume $\mbK$ has a $0-1$ law, $\mcM\models T_\mbK$ and let $D=\xi(\mcM)$ (possibly empty) be an $\emptyset-$definable subset of $M^t$. 
Assume that for each $\bar a\in D$ the formula $\psi(x, y,\bar a)$ defines an equivalence relation $E$ on a set $A=\varphi(\mcM,\bar a)$ in $\mcM$ such that the equivalence classes $E_1,\ldots,E_m,\ldots$ are finite, only attain a finite amount of sizes and the amount of equivalence classes and their size are the same for each $\bar a\in D$. 
Then almost surely for $\mcN\in\mbK$ we have that $gcd(|E_1|,\ldots,|E_m|,\ldots)$ (the greatest common divisor) divides $|\varphi(\mcN,\bar b)|$ for each $\bar b$ such that $\mcN\models \xi(\bar b)$.
\end{lma}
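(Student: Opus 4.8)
The plan is to compress the whole claim into a single first-order sentence $\chi$ which holds in $\mcM$ and is therefore, by completeness of $T_\mbK$, almost surely true in $\mbK$; the conclusion then drops out of elementary divisibility arithmetic in each finite $\mcN$ satisfying $\chi$. First I would record that arithmetic. Let $s_1,\ldots,s_k$ be the finitely many distinct values occurring among the class sizes $|E_1|,|E_2|,\ldots$ (finite by hypothesis), and set $g=\gcd(s_1,\ldots,s_k)$. Since the gcd of a multiset equals the gcd of its distinct entries, $g=\gcd(|E_1|,|E_2|,\ldots)$, and, most importantly, $g$ divides each individual $s_i$. Because the number of classes and their sizes are the same for every $\bar a\in D$, the number $g$ is determined by $\mcM$ alone, independently of the parameter.

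Next I would write down $\chi$, asserting two things that are true in $\mcM$ by hypothesis: (i) for every $\bar a$ with $\xi(\bar a)$, the formula $\psi(x,y,\bar a)$ defines an equivalence relation on $\varphi(\mcM,\bar a)$; and (ii) for every such $\bar a$ and every $x$ with $\varphi(x,\bar a)$, the equivalence class of $x$ has size one of $s_1,\ldots,s_k$. Both clauses are expressible using the counting quantifiers $\exists^{=s_i}$, the size clause reading
\[\forall \bar a\,\Big(\xi(\bar a)\to\forall x\,\big(\varphi(x,\bar a)\to\textstyle\bigvee_{i=1}^{k}\exists^{=s_i} y\,(\varphi(y,\bar a)\wedge\psi(x,y,\bar a))\big)\Big),\]
so $\chi$ is genuinely a first-order sentence over the (finite) vocabulary.

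Then the model-theoretic step is immediate. Since $\mbK$ has a $0-1$ law, $T_\mbK$ is complete, and $\mcM\models T_\mbK$ forces $T_\mbK=Th(\mcM)$; as $\mcM\models\chi$ we obtain $\chi\in T_\mbK$, i.e.\ $\chi$ is almost surely true. Finally I would unwind $\chi$ in a finite $\mcN$ satisfying it: for any $\bar b$ with $\mcN\models\xi(\bar b)$, the set $\varphi(\mcN,\bar b)$ is partitioned by $\psi(\cdot,\cdot,\bar b)$ into finitely many classes, each of size one of $s_1,\ldots,s_k$. Hence $|\varphi(\mcN,\bar b)|$ is a sum of integers each divisible by $g$, so $g$ divides $|\varphi(\mcN,\bar b)|$, which is exactly the assertion.

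I do not anticipate a genuine obstacle, since the core of the argument is simply transferring a first-order fact from $\mcM$ to $\mbK$ through the $0-1$ law. The only points needing care are the two degenerate situations: if $D=\xi(\mcM)=\es$, then $\forall\bar a\,\neg\xi(\bar a)\in T_\mbK$, so almost surely no $\bar b$ satisfies $\xi$ and the conclusion holds vacuously; and the bookkeeping that lets the (possibly infinitely many) finite classes of $\mcM$ be controlled by the \emph{finitely many} size values $s_i$ — this finiteness is precisely what makes the size clause of $\chi$ a first-order sentence rather than an infinite disjunction.
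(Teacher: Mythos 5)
Your proposal is correct and follows essentially the same route as the paper: encode the hypothesis as a single first-order sentence (equivalence relation plus class sizes among the finitely many values, via counting quantifiers $\exists^{=s_i}$), transfer it into $T_\mbK$ by completeness coming from the $0$-$1$ law, and finish with the divisibility arithmetic of the induced partition in each finite $\mcN$. Your handling of the degenerate case $D=\es$ (vacuous truth, since $\forall\bar a\,\neg\xi(\bar a)$ transfers almost surely) is if anything slightly cleaner than the paper's remark that the sentence can be rewritten without the parameter.
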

\begin{proof}
Assume that the equivalence classes of $E$ have sizes $e_1,\ldots,e_n$ and $D\neq \emptyset$. Since $E$ is defined by $\psi(x,y,\bar a)$ for each $\bar a\in D$ there is a (parameter free) sentence $\psi_E$ which says that for each parameter in $D$, $E$ is an equivalence relation on $A$ and its equivalence classes only attain the fixed finite sizes. The sentence $\psi_E$ will look as follows, where $\psi_{eq}(\bar z)$ states that $\psi(x,y,\bar z)$ is an equivalence relation,
\[\forall \bar z  \Big(\xi(\bar z) \rightarrow \psi_{eq}(\bar z) \wedge \forall x \Big( \varphi(x,\bar z) \rightarrow \bigvee_{i=1}^n\exists^{=e_i}y(\varphi(y,\bar z)\wedge\psi (x,y,\bar z)) \Big)\Big).\]
Since $T_\mbK$ is complete and $\mcM\models \psi_E$ we see that $\psi_E\in T_\mbK$ and so $\psi_E$ is almost surely true in $\mbK$. Hence almost surely $\mcN\in\mbK$ defines, for each parameter $\bar b\in \xi(\mcN)$, an equivalence relation relation $\psi(x,y,\bar b)$ on the set defined by $\varphi(x,\bar b)$, with equivalence classes of size $e_1,\ldots,e_n$. But $\psi(x,y,\bar b)$ partitions $\varphi(\mcN,\bar b)$ so we see that if $c_i>0$ (we know this almost surely since it is true in $\mcM$) is the number of equivalence classes of size $e_i$ in $\varphi(\mcN,\bar b)$ then
\[|\varphi(\mcN,\bar b)| = \sum_{i=1}^n c_i e_i = gcd(e_1,\ldots,e_n)\frac{\sum_{i=1}^n c_ie_i }{gcd(e_1,\ldots,e_n)}.\]
 Hence $gcd(e_1,\ldots,e_n)$ divides $|\varphi(\mcN,\bar b)|$ almost surely and the lemma now follows. Note that if $D=\emptyset$ then the whole proof can be done in the same way except that we do not mention any parameter $\bar z$ in the first equation, and remove each instance of mentioning $\xi$, thus $\psi_E$ becomes simpler.
\end{proof}
\noindent By using the parameter free formula $x=x$ to define our target set $A$ and choosing an $\emptyset-$definable equivalence relation $E$, we get the following corollary.
\begin{cor}\label{eqcor1}
Assume $\mbK$ has a $0-1$ law and there is an $\emptyset-$definable equivalence relation $E$ on $\mcM\models T_\mbK$ such that the equivalence classes $E_1,\ldots,E_n,\ldots$ are finite, and only attain a finite amount of sizes. Then almost surely for $\mcN\in\mbK$ we have that $gcd(|E_1|,\ldots,|E_n|,\ldots)$ divides $|\mcN|$.
\end{cor} 
\noindent If we have an $\emptyset-$definable pregeometry, such as in simple $\omega-$categorical structures with SU-rank 1 (compare Fact \ref{simppregfact}) then there exists an equivalence relation on all objects outside $cl(\emptyset)$ by relating closed sets of a certain dimension.  
The next proposition, which is an application of Lemma \ref{eqlemma}, could be generalized even more to pregeometries definable using some parameter set which is $\emptyset-$definable (a formulation like in Lemma \ref{eqlemma}). This is however not necessary for our later applications and hence we write it in a bit more readable format.
\begin{lma}\label{cllemma}Assume $\mbK$ has a $0-1$ law, $\mcM\models T_\mbK$ and $n\in\mbbZ^+$. If a pregeometry is $\emptyset-$definable on an $\emptyset-$definable set $A = \varphi(\mcM)$ such that for each set $\{a_1,\ldots,a_n\},\{b_1,\ldots,b_n\}\subseteq A$ of independent elements $|cl(a_1,\ldots,a_n)| = |cl(b_1,\ldots,b_n)|$ and $|cl(a_1,\ldots,a_{n-1})| = |cl(b_1,\ldots,b_{n-1})|$. \\\indent  Then $|cl(a_1,\ldots,a_n)|-|cl(a_1,\ldots,a_{n-1})|$ almost surely divides \\$|\varphi(\mcN)|-|\cl(a_1,\ldots,a_{n-1})| $ for $\mcN\in\mbK$ and any independent set $\{a_1,\ldots,a_n\}\subseteq\varphi(\mcM)$. \end{lma}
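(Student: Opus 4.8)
The plan is to reduce this statement to an application of Lemma \ref{eqlemma}, by constructing an appropriate $\emptyset$-definable equivalence relation whose classes are the ``shells'' added when extending a fixed independent $(n-1)$-set by one more independent element. First I would fix an independent set $\{a_1,\ldots,a_{n-1}\}\subseteq\varphi(\mcM)$ and work relative to the closed set $C=cl(a_1,\ldots,a_{n-1})$. By the hypothesis the cardinality $|C|$ depends only on the number of parameters $n-1$, not on the choice of independent elements, and the same is true of $|cl(a_1,\ldots,a_n)|$ for an independent extension. The quantity of interest, $|cl(a_1,\ldots,a_n)|-|C|$, is therefore a well-defined constant, call it $d$, counting the elements added to $C$ when one further independent element is thrown in.

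Next I would set up the equivalence relation on the relevant parameter-defined set. Since the pregeometry is $\emptyset$-definable, there are formulas $\theta_i$ witnessing closure, and using $\omega$-categoricity (Ryll-Nardzewski) I may take a finite $\emptyset$-definable formula $\xi(\bar z)$ isolating the type of an independent $(n-1)$-tuple $\bar z=(a_1,\ldots,a_{n-1})$; let $D=\xi(\mcM)$. For $\bar a\in D$, restrict attention to the set $A_{\bar a}=\varphi(\mcM)\setminus cl(\bar a)$, which is $\bar a$-definable, and define $\psi(x,y,\bar a)$ to hold when $cl(\bar a,x)=cl(\bar a,y)$ (equivalently, when $x\in cl(\bar a,y)\setminus cl(\bar a)$, expressible via the $\theta_i$). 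This is an equivalence relation on $A_{\bar a}$ whose classes are exactly the sets $cl(\bar a,x)\setminus cl(\bar a)$ for $x$ ranging over independent extensions; by the exchange property and the cardinality hypothesis each such class has size exactly $d$, so all classes attain the single size $d$, which is certainly a finite amount of finitely many sizes, and by $\omega$-categoricity the number of classes and their sizes are constant across $\bar a\in D$.

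With this data Lemma \ref{eqlemma} applies with $t=n-1$, target set $A_{\bar a}$, and equivalence relation $\psi$: it yields that almost surely $\gcd$ of the class sizes, which here is simply $d$, divides $|A_{\bar b}|=|\varphi(\mcN)|-|cl(b_1,\ldots,b_{n-1})|$ for every $\bar b$ with $\mcN\models\xi(\bar b)$. Since $d=|cl(a_1,\ldots,a_n)|-|cl(a_1,\ldots,a_{n-1})|$ and, by hypothesis, $|cl(b_1,\ldots,b_{n-1})|=|cl(a_1,\ldots,a_{n-1})|$, this is exactly the claimed divisibility. The main obstacle I anticipate is bookkeeping in the definability of $A_{\bar a}$ and of $\psi$: one must check that ``$x$ lies in an independent extension of $\bar a$'' and ``$cl(\bar a,x)=cl(\bar a,y)$'' are genuinely first-order in the home sort using only the closure formulas $\theta_i$ (which requires the $\emptyset$-definability of the pregeometry together with Ryll-Nardzewski to bound the relevant arities), and that the exchange axiom really forces every class to have the uniform size $d$ rather than merely bounding it. Once these definability and uniformity points are settled, the divisibility is immediate from Lemma \ref{eqlemma}.
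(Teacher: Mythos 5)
Your proof is correct and takes essentially the same route as the paper's: both let $D$ be the $\emptyset$-definable set of independent $(n-1)$-tuples in $\varphi(\mcM)$, define the equivalence relation $\psi(x,y,\bar z)\Longleftrightarrow cl(x,\bar z)=cl(y,\bar z)$ on the $\bar z$-definable set $\varphi(\mcM)\setminus cl(\bar z)$, note via exchange and the cardinality hypothesis that every class has the constant finite size $|cl(a_1,\ldots,a_n)|-|cl(a_1,\ldots,a_{n-1})|$, and then apply Lemma \ref{eqlemma}. The only cosmetic difference is your appeal to Ryll-Nardzewski, which is unnecessary (and not available, since $\omega$-categoricity is not a hypothesis of the lemma): the assumed $\emptyset$-definability of the pregeometry already provides the formulas defining $D$ and $\psi$.
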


\begin{proof}
Let $D$ be the $\emptyset-$definable set of $(n-1)$-tuples which consist of independent elements in $\varphi(\mcM)$. Then the formula
\[\psi(x,y,\bar z) \Longleftrightarrow cl(y,\bar z) = cl(x,\bar z) \]
defines an equivalence relation, for each tuple $\bar z\in D$, on the $\bar z-$definable set $A_0=\{a\in \varphi(\mcM) : a\notin cl(\bar z)\}$. Notice that each equivalence class of this relation has size $|cl(x,\bar z)| - |cl(\bar z)|$, a number which does not depend on $\bar z\in D$ or $x\in A_0$ by our assumptions in this lemma. Each equivalence class is finite. Lemma \ref{eqlemma} implies that $|cl(x,\bar z)| - |cl(\bar z)|$ almost surely divides $|A_0| = |\varphi(\mcN)|-|cl(\bar z)|$ for any independent $(n-1)-$tuple $\bar z$ of elements from $A$.
\end{proof}
\noindent In practice, many pregeometries found in structures are vector space pregeometries (or affine/projective). This gives us even more information to use and we may thus create the following corollary.
\begin{cor}\label{vectcor}
Assume $T_\mbK$ is $\omega-$categorical, $\mcM\models T_\mbK$ and $(M,acl)$ is a pregeometry isomorphic to the pregeometry of a vector space $V$ over a finite field $\mathbbm{F}$ equipped with the linear $span$ operator. Then there are $p,m\in\mbbZ^+$ such that $p$ is prime and for each $n\in\mbbZ^+$, $p^{mn}$ almost surely divides $|N|$ for $\mcN\in\mbK$. \end{cor}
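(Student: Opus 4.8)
The plan is to realize this corollary as a direct application of Lemma~\ref{cllemma} taken over the whole universe. First I would fix notation and record the two facts needed to invoke that lemma. As $\mathbbm{F}$ is a finite field its cardinality is a prime power, so write $|\mathbbm{F}| = p^m$ with $p$ prime and $m\in\mbbZ^+$; these are the $p,m$ of the statement. Since $T_\mbK$ is $\omega-$categorical, the Ryll-Nardzewski theorem provides only finitely many types in each arity, so for every $i$ the relation ``$x_0\in acl(x_1,\ldots,x_i)$'' is defined by a single formula over $\emptyset$ (a disjunction of the isolating formulas of the algebraic $(i+1)-$types). Hence $(M,acl)$ is an $\emptyset-$definable pregeometry on the $\emptyset-$definable set $A = \varphi(\mcM) = M$, where $\varphi(x)$ is $x=x$.

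Next I would transport the sizes of closed sets across the pregeometry isomorphism. Let $f\colon M\to V$ be a bijection witnessing $(M,acl)\cong (V,\mathrm{span})$, so that $f(acl(X)) = \mathrm{span}(f(X))$ for all $X\subseteq M$. Then a set $\{a_1,\ldots,a_k\}\subseteq M$ is independent exactly when $\{f(a_1),\ldots,f(a_k)\}$ is linearly independent, and in that case $f$ restricts to a bijection of $acl(a_1,\ldots,a_k)$ onto the $k-$dimensional subspace $\mathrm{span}(f(a_1),\ldots,f(a_k))$. The latter has $|\mathbbm{F}|^k = p^{mk}$ elements, so $|acl(a_1,\ldots,a_k)| = p^{mk}$ for every independent $k-$set. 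In particular this number does not depend on the chosen independent set, which is precisely the uniformity hypothesis of Lemma~\ref{cllemma}.

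It remains to check that $M$ carries independent sets of each finite size and then to run the divisibility bookkeeping. Since structures in $\mbK$ grow almost surely, the sentence $\exists^{\geq k}x\,(x=x)$ lies in $T_\mbK$ for every $k$, so $\mcM$ is infinite; then $V$ is an infinite vector space over a finite field, hence infinite dimensional, and $M$ admits independent sets of every finite cardinality. Now fix $n\in\mbbZ^+$, pick an independent $(n+1)-$set $\{a_1,\ldots,a_{n+1}\}$, and apply Lemma~\ref{cllemma} with its $n$ replaced by $n+1$. Using the computation above,
\[ |cl(a_1,\ldots,a_{n+1})| - |cl(a_1,\ldots,a_n)| = p^{m(n+1)} - p^{mn} = p^{mn}(p^m-1), \]
so the lemma yields that $p^{mn}(p^m-1)$ almost surely divides $|\varphi(\mcN)| - |cl(a_1,\ldots,a_n)| = |N| - p^{mn}$ for $\mcN\in\mbK$. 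As $p^{mn}$ divides $p^{mn}(p^m-1)$, it divides $|N|-p^{mn}$ as well, and hence $p^{mn}$ almost surely divides $|N|$, which is the assertion.

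I do not expect a genuine obstacle here, since the corollary is a translation of Lemma~\ref{cllemma} into the arithmetic of finite-dimensional subspaces. The only points needing care are the index shift---applying the lemma at dimension $n+1$ rather than $n$ so that the class-size difference $p^{mn}(p^m-1)$ carries the full factor $p^{mn}$---together with the elementary step turning $p^{mn}(p^m-1)\mid |N|-p^{mn}$ into $p^{mn}\mid|N|$, and the preliminary remark that an infinite geometry over a finite field really does supply independent sets of every size, without which the conclusion could not be obtained for large $n$.
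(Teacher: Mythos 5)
Your proposal is correct and follows essentially the same route as the paper's proof: both use $\omega$-categoricity to get $\emptyset$-definability of the pregeometry, the prime-power cardinality $p^m$ of the finite field to compute $|acl(\text{independent }k\text{-set})| = p^{mk}$, and then apply Lemma~\ref{cllemma} at dimension $n+1$ so that the class-size difference $p^{mn}(p^m-1)$ divides $|N| - p^{mn}$, whence $p^{mn}$ divides $|N|$. The only difference is presentational: you spell out the Ryll-Nardzewski definability argument and the existence of independent sets of every size (via the standing assumption that structures almost surely grow), details the paper leaves implicit.
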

\begin{proof}
Since $T_\mbK$ is $\omega-$categorical, the pregeometry which $(M, acl)$ defines is $\emptyset-$definable. By a well known characterization, a finite field has size $p^m$ for some prime $p$ and number $m$. In a vector space $V$ over $\mathbbm{F}$ we have that $|span(v_1,\ldots,v_r)| =|span(w_1,\ldots,w_r)|$ for any $r\in\mbbN$ and independent vectors $v_1,\ldots,v_r,w_1,\ldots,w_r\in V$, hence this is also true for the pregeometry $(M,acl)$. For each $n\in\mbbN$, Lemma \ref{cllemma} gives us that $|acl_\mcM(a_1,\ldots,a_{n+1})|-|acl_\mcM(a_1,\ldots,a_{n})|$ almost surely divides $|N| - |acl_\mcM(a_1,\ldots,a_{n})|$ for $\mcN\in\mbK$ and independent elements $a_1,\ldots,a_{n+1}\in M$. Thus
\[|acl_\mcM(a_1,\ldots,a_{n+1})|- |acl_\mcM(a_1,\ldots,a_{n})| = (p^m)^{n+1} - (p^m)^{n} = p^{mn}(p^m-1).\]
We conclude that some $k\in\mbbN$, $p^{mn}(p^m-1)\cdot k = |N|-|acl_\mcM(a_1,\ldots,a_n)| = |N|- p^{mn}$ which we can rewrite as $p^{mn}((p^m-1)\cdot k + 1) = |N|$, hence $p^{mn}$ divides $|N|$. 
\end{proof} 
\noindent If we are in the case of Corollary \ref{vectcor}, then we will in $\mbK$ not just have a growth, but structures will asymptotically grow in larger and larger steps. 
 This motivates why using a measure which depends on dimension and not size, as in \cite{AK,K}, is necessary if we want an almost sure theory with interesting pregeometries. We finish this section with yet an other corollary regarding almost sure theories.
\begin{cor}
Assume $T_\mbK$ is $\omega-$categorical, $\mcM\models T_\mbK$ and for each $n\in\mbbN$ and $\mcN\in\mbK_n$ we have that $|N|=n$. Then each $\emptyset-$definable equivalence relation in $\mcM$ with only finite sized equivalence classes has only a finite amount of different sizes and the sizes are relatively prime .
\end{cor}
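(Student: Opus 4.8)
The plan is to separate the statement into its two halves---finitely many distinct class sizes, and relative primality of those sizes---handling the first by the Ryll-Nardzewski theorem and the second by Corollary \ref{eqcor1} together with the hypothesis $|N|=n$. First I would record that $\mbK$ genuinely has a $0$-$1$ law: since $|N|=n$ for every $\mcN\in\mbK_n$ the sizes grow, so no sentence asserting the existence of exactly $m$ elements is almost surely true and $T_\mbK$ has no finite models; recalling the introductory fact that an $\omega-$categorical theory without finite models is complete, we conclude that $\mbK$ has a $0$-$1$ law and $Th(\mcM)=T_\mbK$. In particular $\mcM$ is itself $\omega-$categorical, so Ryll-Nardzewski applies to it.

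For the finiteness of the number of sizes, let $E$ be an $\emptyset-$definable equivalence relation on $\mcM$ all of whose classes are finite. I would observe that ``$x$ lies in a class of size exactly $k$'' is expressed by the formula $\exists^{=k}y\,E(x,y)$, whose truth depends only on $tp_\mcM(x/\emptyset)$; since Ryll-Nardzewski yields only finitely many $1-$types, only finitely many values of $k$ can occur. Denote the distinct sizes by $e_1,\ldots,e_r$.

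For the relative primality I would feed this $E$ into Corollary \ref{eqcor1}, obtaining that $d:=\gcd(e_1,\ldots,e_r)$ almost surely divides $|N|$ for $\mcN\in\mbK$. The crux, and the only genuinely non-routine step, is to exploit that each $\mbK_n$ is homogeneous in size: because $|N|=n$ for every $\mcN\in\mbK_n$ (and $\mbK_n\neq\es$), the event ``$d\mid|N|$'' is either all of $\mbK_n$ or none of it, so its $\mu_n-$probability is $0$ or $1$; as this probability tends to $1$ we must have $d\mid n$ for all sufficiently large $n$. Finally, choosing two consecutive large values $n$ and $n+1$ forces $d\mid(n+1)-n=1$, i.e.\ $d=1$. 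I expect this last passage---the all-or-nothing dichotomy forcing divisibility for large $n$, followed by the consecutive-integers trick---to be the only delicate point, the finiteness of the number of sizes being immediate from Ryll-Nardzewski.
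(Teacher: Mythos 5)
Your proposal is correct and follows essentially the same route as the paper: Ryll-Nardzewski for the finiteness of the class sizes, Corollary \ref{eqcor1} to get that the gcd almost surely divides $|N|$, and then the hypothesis $|N|=n$ to force the gcd to divide all sufficiently large integers, hence to equal $1$ (the paper concludes via distinct primes rather than consecutive integers, a cosmetic difference). Your explicit justification that $\mbK$ has a $0$-$1$ law (growth of sizes, hence no finite models, hence completeness of the $\omega$-categorical $T_\mbK$) is a step the paper leaves implicit, and it is a welcome addition.
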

\begin{proof}
From the $\omega-$categoricity and Ryll-Nardzevskis theorem it follows that an equivalence relation with only finite sized equivalence classes has to have a finite amount of different sizes.
Let $e_1,\ldots,e_n$ be the different sizes of equivalence classes. We may apply Corollary \ref{eqcor1} to see that $gcd(e_1,\ldots,e_n)$ divides $|N|$ almost surely for $\mcN\in\mbK$. But since $\mcN\in\mbK_m$ implies that $|N|=m$ for each $m\in\mbbN$, we see that $gcd(e_1,\ldots,e_n)$ has to divide more than two prime numbers, which is impossible unless $gcd(e_1,\ldots,e_n)=1$.\end{proof}

\section{$\omega-$categorical theories}\label{omegasection}
\noindent In this section we study the $\omega-$categorical theories from the view of extension properties, as defined bellow. These concepts are inspired by the method used to prove $0-1$ laws, originally used by Fagin \cite{F}, and we will extend the concepts in order to give further general results regarding $0-1$ laws and $\omega-$categorical almost sure theories.
\begin{defi}\label{thetadef} Let $T$ be a theory. For each $k\in\mbbZ^+$ assume that there are formulas $\theta_{k,1}(x_1,\ldots,x_k),\ldots,\theta_{k,i_k}(x_1,\ldots,x_k)$ such that if 
\[\sigma_k \equiv \forall x_1,\ldots,x_k \Big((\bigwedge_{1\leq i<j\leq k} x_i\neq x_j) \rightarrow \bigvee_{n=1}^{i_k} \theta_{k,n}(x_1,\ldots,x_k)\Big)\]
then $T\models \sigma_k$. Furthermore assume that for each $i\in\{1,\ldots,i_k\}$ there are \textbf{associated numbers} $j_1,\ldots,j_{m}\in\{1,\ldots,i_{k+1}\}$ for some $1\leq m\leq i_{k+1}$ such that if we, for each $j\in\{j_1,\ldots,j_m\}$, define formulas
\[ \tau_{k,i,j}  \equiv \forall x_1,\ldots,x_k (\theta_{k,i}(x_1,\ldots,x_k)\rightarrow \exists z \theta_{k+1,j}(x_1,\ldots,x_k,z)) \text{   and}\]
\[\xi_{k,i}\equiv\forall x_1,\ldots,x_k \Big(\theta_{k,i} (x_1,\ldots,x_k) \rightarrow \forall y \big(\bigwedge_{n=1}^k y\neq x_n  \rightarrow \]\[\bigvee_{n=1}^m \theta_{k+1,j_n} (x_1,\ldots,x_k, y)\big)\Big)\]
with the special case 
\[\tau_{0,1,j} \equiv \exists z\theta_{1,j}(z),\]
then $T\models \xi_{k,i}$ and $T\models \tau_{k,i,j}$. 
If all the above assumptions are true, for each $k$ and $i\in [i_k]$
\[T\models \forall x_1,\ldots,x_k (\theta_{k,i}(x_1,\ldots,x_k)\rightarrow\bigwedge_{1\leq\alpha < \beta\leq k} x_\alpha\neq x_\beta) \]
and for each $k$ and $i\in [i_k]$ there is an atomic diagram $R$ such that
\begin{equation}~\label{thetastrong} T\models\forall x_1,\ldots,x_k\Big (\theta_{k,i}(x_1,\ldots,x_k)\rightarrow R(x_{1},\ldots,x_{k})\Big), \end{equation}
then we say that $T$ satisfies \textbf{extension properties}. The formulas $\theta_{k,i}$ are called \textbf{extension axioms}.
\\\indent If $\mbK= (\mbK_n,\mu_n)_{n\in\mbbN}$ are sets of finite structures with an associated probability measure we say that $\mbK$ \textbf{almost surely satisfies extension properties} if its almost sure theory $T_\mbK$ satisfy extension properties.
\end{defi}
The notion of general extension properties is not new, they have been seen before in, for instance, Spencer's book \cite{S}. However when extension properties are formulated in the specific manner of Definition \ref{thetadef} they are closely connected to $\omega-$categorical theories which we will show in the following fact using the Ryll-Nardzevski theorem.
\begin{fact}\label{omegathm} Let $T$ be a theory. $T$ is $\omega-$categorical if and only if $T$ satisfies extension properties.
\end{fact}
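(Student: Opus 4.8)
The plan is to prove both directions using the Ryll-Nardzewski theorem, which characterizes $\omega$-categoricity via finiteness of the number of $n$-types for each $n$.

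\textbf{Direction 1: extension properties imply $\omega$-categoricity.} Suppose $T$ satisfies extension properties. The formulas $\theta_{k,i}(x_1,\ldots,x_k)$ for $i\in[i_k]$ are, by construction, a finite list covering every tuple of distinct elements (that is what $\sigma_k$ guarantees), and each $\theta_{k,i}$ forces a fixed atomic diagram $R$ on $(x_1,\ldots,x_k)$ by \eqref{thetastrong}. The key claim is that each consistent $\theta_{k,i}$ isolates a complete $k$-type: given two realizations $\bar a,\bar b$ of $\theta_{k,i}$ in models of $T$, I would build an isomorphism between their generated substructures by a back-and-forth argument, using the formulas $\tau_{k,i,j}$ and $\xi_{k,i}$ as the inductive step. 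Concretely, $\xi_{k,i}$ says that any new element $y$ extending an $i$-tuple must satisfy one of the $\theta_{k+1,j_n}$ among the associated numbers, so the possible one-point extensions are controlled by a finite menu; $\tau_{k,i,j}$ guarantees each such extension is actually realized. This is exactly the setup for a standard back-and-forth / Ehrenfeucht-Fra\"iss\'e style argument showing any two models of $T$ are partially isomorphic, hence (being countable, if we pass to countable models) isomorphic. Since there are only finitely many $\theta_{k,i}$ for each $k$, there are only finitely many $k$-types, and Ryll-Nardzewski gives $\omega$-categoricity.

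\textbf{Direction 2: $\omega$-categoricity implies extension properties.} Suppose $T$ is $\omega$-categorical. By Ryll-Nardzewski, for each $k$ there are only finitely many $k$-types, and each is isolated; let $\theta_{k,1},\ldots,\theta_{k,i_k}$ be the isolating formulas for the complete $k$-types of tuples of distinct elements. Then $\sigma_k$ holds because every distinct $k$-tuple realizes exactly one such type, and the requirement that $\theta_{k,i}$ force distinctness and a fixed atomic diagram holds because a complete type decides all equalities and all atomic formulas among the variables. For the associated numbers: given $\theta_{k,i}$, I would let $j_1,\ldots,j_m$ enumerate exactly those $(k+1)$-types that extend the type $\theta_{k,i}$ by one further distinct element. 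Then $\xi_{k,i}$ holds because any $y\neq x_1,\ldots,x_k$ produces a $(k+1)$-tuple whose type must be one of these extensions, and $\tau_{k,i,j}$ holds because the type $\theta_{k+1,j}$ is consistent with $T$ and extends $\theta_{k,i}$, so the existential witness exists in every model. The base case $\tau_{0,1,j}$ just records that each $1$-type is realized.

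\textbf{The main obstacle} is bookkeeping in Direction 1, specifically verifying that the back-and-forth built from $\tau_{k,i,j}$ and $\xi_{k,i}$ genuinely produces a well-defined embedding rather than merely matching types locally: one must check that when an element is added, the atomic diagram on the \emph{whole} extended tuple (not just the new pair) is pinned down, which is why the hypothesis \eqref{thetastrong} forcing a full atomic diagram $R$ on all of $(x_1,\ldots,x_k)$ is essential. I would emphasize that it is precisely this strong atomic-diagram clause that upgrades the finite menu of one-point extensions into control over the isomorphism type of the generated substructure, closing the back-and-forth. The finiteness counts on both sides then feed directly into Ryll-Nardzewski, so the two implications are genuinely dual applications of the same theorem.
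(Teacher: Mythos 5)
Your proposal is correct and follows essentially the same route as the paper: isolating formulas for the finitely many complete types (via Ryll-Nardzewski) serve as the extension axioms in one direction, and a back-and-forth construction using $\tau_{k,i,j}$ and $\xi_{k,i}$ (anchored by the atomic-diagram clause \eqref{thetastrong}) handles the other. The only cosmetic difference is that the paper's back-and-forth directly produces an isomorphism between any two countable models, concluding $\omega$-categoricity by definition, whereas you use the same back-and-forth to show each $\theta_{k,i}$ isolates a type and then re-invoke Ryll-Nardzewski.
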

\begin{proof}
 Assume $T$ is $\omega-$categorical. For each $n<\omega$, there exists a finite amount of complete $n-$types over $\emptyset$ of distinct tuples and they are isolated by formulas $\theta_{k,1}(\bar x),\ldots,\theta_{k,i_k}(\bar x)$. It is clear that property (\ref{thetastrong}) is satisfied and that for any $k-$tuple of distinct elements there exists a number $i$ (i.e. a complete type) such that the tuple satisfies $\theta_{k,i}$, thus $\sigma_k$ is satisfied. For each type $p(\bar x)$ there exists a distinct set of types $q_1,\ldots,q_r$ such that adding a distinct element to the tuple $\bar x$ implies that the tuple satisfies exactly one of $q_1,\ldots,q_r$. It is thus clear that there exist associated numbers $j_1,\ldots j_m$ such that $T\models\xi_{k,i}$ and for $j\in\{j_1,\ldots,j_m\} $, $T\models \tau_{k,i,j}$. \\\indent
For the other direction assume $T$ satisfies extension properties and let $\mcM,\mcN\models T$ be countable. Using the extension properties we will build an isomorphism $f$ between $\mcM$ and $\mcN$ in a back and forth way with functions $f_1,\ldots,f_n,\ldots$ such that $f_i$ is a partial isomorphism and if the domain of $f_k$ is $\bar x$, $\mcM\models\theta_{k,i}(\bar x)$ and $\mcN\models \theta_{k,i}(f_k(\bar x))$. For the base step, $f_1$ if we choose $a\in M$ then $\mcM\models \sigma_1$ implies that for some $i\in [i_1]$, $\mcM\models \theta_{1,i}(a)$, however as $\mcN\models \tau_{0,1,i}$ there is an element $b\in N$ such that $\mcN\models \theta_{1,i}(b)$. Define $f_1:\mcM\reduct \{a\}\rightarrow \mcN\reduct\{b\}$.\\\indent
Assume $f_k:\mcM\reduct\{a_1,\ldots,a_k\}\rightarrow\mcN\reduct\{b_1,\ldots,b_k\}$ respects $\theta_{k,i}$ as we described above and choose any $a_{k+1}\in M-\{a_1,\ldots,a_k\}$ (parallel reasoning if we chose $b_{k+1}\in N-\{b_1,\ldots,b_k\}$ first instead). As $T\models\xi_{k,i}$ there is a $j$ such that $\mcM\models\theta_{k+1,j}(a_1,\ldots,a_{k+1})$ and $\mcN\models\tau_{k,i,j}$. Thus there exists an element $b_{k+1}\in N$ such that $\mcN\models\theta_{k+1,j}(b_1,\ldots,b_{k+1})$. It is clear from property (\ref{thetastrong}) that if we extend $f_k$ to a map $f_{k+1}$ which takes $a_{k+1}$ to $b_{k+1}$ then this also is a partial isomorphism.
\end{proof}
The back and forth way of proving the previous fact implies that we may create an automorphism inside a structure satisfying extension properties such that any two tuples satisfying the same extension axioms are mapped to each other. Thus we conclude the following corollary.
\begin{cor}\label{exttpcor}
If $T$ satisfies extension properties, $\mcM\models T$ and $\bar a,\bar b\in M$ are such that for some extension axiom $\mcM\models \theta_{k,i}(\bar a)\wedge \theta_{k,i}(\bar b)$ then $tp(\bar a) = tp(\bar b)$.
 \end{cor}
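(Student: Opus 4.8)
The plan is to produce an automorphism of $\mcM$ carrying $\bar a$ to $\bar b$; since automorphisms preserve the truth of every formula with parameters among the mapped coordinates, this immediately yields $tp(\bar a)=tp(\bar b)$. The automorphism will be built by exactly the back-and-forth procedure used in the proof of Fact \ref{omegathm}, but run inside the single structure $\mcM$ (taking $\mcN=\mcM$) and \emph{started} from the partial map $\bar a\mapsto\bar b$ rather than from the empty map.

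First I would check that $\bar a\mapsto\bar b$ is a legitimate starting point. Because $T\models\forall x_1,\ldots,x_k(\theta_{k,i}(x_1,\ldots,x_k)\to\bigwedge_{1\leq\alpha<\beta\leq k}x_\alpha\neq x_\beta)$, both $\bar a$ and $\bar b$ are tuples of distinct elements, and by property (\ref{thetastrong}) there is a single atomic diagram $R$ with $T\models\forall \bar x(\theta_{k,i}(\bar x)\to R(\bar x))$. Hence $atDiag^\mcM(\bar a)=R=atDiag^\mcM(\bar b)$, so $\bar a\mapsto\bar b$ is a partial isomorphism meeting precisely the inductive invariant of that construction at stage $k$: domain and range satisfy one and the same extension axiom $\theta_{k,i}$.

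Next I would run the extension step verbatim. Given the current partial isomorphism whose domain and range both satisfy $\theta_{k',i'}$, and any new element to be added to the domain, $\xi_{k',i'}$ supplies the index $j$ of the extension axiom satisfied by the enlarged domain tuple, and $\tau_{k',i',j}$ (applied in $\mcM$ in the role of $\mcN$) supplies a witness enlarging the range so that the invariant is preserved; the symmetric argument handles new range elements. For countable $\mcM$ this alternation exhausts $M$ and produces an automorphism $f$ with $f(\bar a)=\bar b$, as desired.

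The only genuine obstacle is that $\mcM$ need not be countable, whereas the construction in Fact \ref{omegathm} is phrased for countable models. I would remove it by passing to a countable elementary substructure: by downward L\"owenheim--Skolem choose $\mcM_0\preceq\mcM$ with $\bar a,\bar b\in M_0$. Then $\mcM_0\models T$ and, by elementarity, $\mcM_0\models\theta_{k,i}(\bar a)\wedge\theta_{k,i}(\bar b)$, so the countable back-and-forth yields an automorphism of $\mcM_0$ taking $\bar a$ to $\bar b$; thus $tp_{\mcM_0}(\bar a)=tp_{\mcM_0}(\bar b)$, and $\mcM_0\preceq\mcM$ gives $tp_\mcM(\bar a)=tp_\mcM(\bar b)$. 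Alternatively one could carry out the back-and-forth transfinitely in $\mcM$ itself, enumerating $M$ in order type $|M|$ and taking unions at limit stages, since partial isomorphisms are closed under unions of chains; I would favor the L\"owenheim--Skolem route as the cleaner write-up.
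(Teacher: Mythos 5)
Your proof is correct and takes essentially the same approach as the paper: the paper justifies this corollary precisely by observing that the back-and-forth construction from Fact \ref{omegathm}, run inside a single structure starting from the partial map $\bar a\mapsto\bar b$ (legitimate by property (\ref{thetastrong}) and distinctness), produces an automorphism carrying $\bar a$ to $\bar b$. Your treatment of the uncountable case via downward L\"owenheim--Skolem fills in a detail the paper leaves implicit, but it is a refinement of, not a departure from, the paper's argument.
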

On the other hand, regarding $0-1$ laws, we get the following corollary.
\begin{cor}\label{01cor}
Let $\mbK=(\mbK_n,\mu_n)_{n\in\mbbN}$. $T_\mbK$ is $\omega-$categorical and $\mbK$ has a $0-1$ law if and only if $\mbK$ almost surely satisfies extension properties
\end{cor}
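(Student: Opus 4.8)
The plan is to reduce everything to Fact \ref{omegathm} together with the elementary Fact from the introduction relating categoricity, finite models, and completeness. First I would unwind the two pieces of terminology in the statement. By Definition \ref{thetadef}, the phrase ``$\mbK$ almost surely satisfies extension properties'' means precisely that the almost sure theory $T_\mbK$ satisfies extension properties; and by the equivalence recorded in Section \ref{introsection}, ``$\mbK$ has a $0-1$ law'' means precisely that $T_\mbK$ is complete. So the statement to be proved is the equivalence
\[
\big(T_\mbK \text{ is } \omega\text{-categorical and complete}\big) \iff \big(T_\mbK \text{ satisfies extension properties}\big).
\]

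For the direction $(\Rightarrow)$ I would simply discard the completeness hypothesis: if $T_\mbK$ is $\omega$-categorical, then Fact \ref{omegathm} immediately yields that $T_\mbK$ satisfies extension properties, that is, $\mbK$ almost surely satisfies extension properties. No probabilistic input is needed in this direction.

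For the direction $(\Leftarrow)$, suppose $\mbK$ almost surely satisfies extension properties, so that $T_\mbK$ satisfies extension properties. By Fact \ref{omegathm} this gives that $T_\mbK$ is $\omega$-categorical, which is half of what we want. It then remains to produce the $0-1$ law, i.e.\ to show that $T_\mbK$ is complete. Here I would invoke the standing assumption that the structures in $\mbK$ almost surely grow in size: for every $m$ the sentence $\exists^{\geq m} x\,(x=x)$ is almost surely true and hence lies in $T_\mbK$, so every model of $T_\mbK$ is infinite and $T_\mbK$ has no finite models. Since $T_\mbK$ is $\omega$-categorical it is categorical in the infinite cardinal $\aleph_0$, and the Fact of Section \ref{introsection} (a theory categorical in some infinite cardinality is complete if and only if it has no finite models) forces $T_\mbK$ to be complete. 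Thus $\mbK$ has a $0-1$ law.

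The only point requiring care, and the main (mild) obstacle, is the passage from $\omega$-categoricity to completeness in the $(\Leftarrow)$ direction: $\omega$-categoricity of an a priori incomplete theory does not by itself guarantee completeness. This is exactly where the growth assumption on $\mbK$ is essential, as it excludes finite models and lets the classical Fact do the work. Everything else is a direct unwinding of the definitions of ``$0-1$ law'' and ``almost surely satisfies extension properties'' combined with Fact \ref{omegathm}.
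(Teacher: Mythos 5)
Your proposal is correct and follows essentially the route the paper intends: the paper states this as an immediate consequence of Fact \ref{omegathm} together with the definitions (a $0-1$ law being equivalent to completeness of $T_\mbK$, and ``almost surely satisfies extension properties'' meaning $T_\mbK$ does), with no further proof given. Your only addition is to spell out the completeness step in the $(\Leftarrow)$ direction via the growth assumption and the categoricity-implies-completeness Fact, which is exactly the implicit justification the paper relies on.
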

\noindent This corollary gives a context to many previous results about $0-1$ laws, and shows that the method of using extension properties always works when proving a $0-1$ law if you have an $\omega-$categorical almost sure theory. There are though classes with $0-1$ laws without an $\omega-$categorical almost sure theory, such as if we let $\mbK_n$ consist of the graph with $n$ nodes in a cycle.
\begin{exa}\label{omegaex}
Let $\mbK_n$ be all relational structures of size $n$ over a vocabulary $V$ equipped with the uniform measure $\mu_n$, then $(\mbK_n,\mu_n)_{n\in\mbbN}$ has a $0-1$ law and the almost sure theory is $\omega-$categorical. Fagin \cite{F} proved this using extension axioms, in which he has $\theta_{k,i}(\bar x)$ as the quantifier free formula which describes the isomorphism type of a finite $V-$structure, letting $\theta_{k,1},\ldots,\theta_{k,i_k}$ enumerate all $V-$structures with universe $[k]$.
\end{exa}
Further examples, with more interesting properties, will be provided in Example \ref{partexa} and \ref{nonrigidexa}.

\begin{defi}
If $A\subseteq M^{eq}$ and there are only finitely many $\emptyset-$definable equivalence relations $E$ on $\mcM$ such that $P_E(\mcM^{eq})\cap A \neq \emptyset$ and for each such equivalence relation $P_E(\mcM^{eq})\subseteq A$  then we say that $A$ is a \textbf{full finitely sorted set}.
\end{defi}
Finite model theory does not have a counterpart to $\mcM^{eq}$ as $\mcM^{eq}$ always is an infinite structure. We will now however construct a way in which finite models may approximate parts of $\mcM^{eq}$  if its theory is an $\omega-$categorical almost sure theory.
\begin{defi}
Let $\mbK_n$ be a set of finite $V$-structures with a probability measure $\mu_n$, let $\mbK = (\mbK_n,\mu_n)_{n\in\mbbN}$ and let $E =\{E_1,\ldots,E_n\}$ be a set of $V-$formulas with even arities $2e_1,\ldots,2e_n$ respectively and define the vocabulary $V' = V \cup \{R_{E_i}, P_{E_i} : 1\leq i\leq n\}$ where $P_{E_i}$ is unary and $R_{E_i}$ has arity $e_i+1$. Notice that we may consider $V'\subseteq V^{eq}$. Then for each $n$ and $\mcN\in\mbK_n$ associate a structure $\mcN'$ in the following way: 
\begin{itemize}
\item If there is a formula in $E$, such that $\mcN$ does not interpret it as an equivalence relation, then expand $\mcN$ to $\mcN'$ as a $V'-$structure by interpreting each new relation symbol as $\emptyset$.
\item If each formula in $E$ is interpreted as an equivalence relation in $\mcN$ then let $A\subseteq \mcN^{eq}$ be the full finitely sorted set which contains the home sort and all equivalence classes of the formulas in $E$. Let $\mcN'=(\mcN^{eq}\upharpoonright A)\upharpoonright V'$. So $\mcN'\models R_{E_i}(\bar x,y)$ iff $\bar x$ is in the $E_i-$equivalence class $y$, and $\mcN'\models P_{E_i}(y)$ iff $y$ is an $E_i-$equivalence class.
\end{itemize} 
Let for each $n\in\mbbN$, $\mbK^E_n = \{\mcN' : \mcN\in \mbK_n\}$ and equip $\mbK^E_n$ with a probability measure $\mu^E_n$ by inducing it from the probability measure of $\mbK_n$ i.e. $\mu_n^E(\mcN') = \mu_n(\mcN)$.
\end{defi}
\noindent We define $\mbK^E_n$ so that it can use a finite slice of $\mcM^{eq}$, adding the equivalence classes to the structures in $\mbK_n$. In case $(\mbK_n,\mu_n)_{n\in\mbbN}$ has a zero-one law but at least one formula in $E$ is not almost surely an equivalence relation then $\mbK^E$ will almost surely be $\mbK$ (i.e. if $\mcN' \in\mbK^E$ then almost surely $\mcN\upharpoonright V \in\mbK$).
\begin{prp}\label{01eqthm}
Let $\mbK = (\mbK_n,\mu_n)_{n\in\mbbN}$ be a set of finite relational structures with almost sure theory $T_\mbK$. For a finite set of $\emptyset-$definable equivalence relations $E = \{E_1,\ldots,E_r\}$ on $\mcM\models T_\mbK$ let $\mbK^E = (\mbK_n^E,\mu_n^E)_{n\in\mbbN}$. Then the following are equivalent :
\begin{itemize}
\item[i)]$\text{$\mbK$ has a } 0-1 \text{ law and } $ $T_\mbK \text{ is } \omega-\text{categorical} $
\item[ii)] $\mbK^E$ has a $0-1$ law and $T_{\mbK^E}$ is $\omega-$categorical.
\end{itemize}
\end{prp}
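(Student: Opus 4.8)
The plan is to prove the equivalence by using the characterization of $\omega$-categoricity via extension properties (Fact~\ref{omegathm}) together with Corollary~\ref{01cor}, reducing everything to a transfer of extension properties between $\mbK$ and $\mbK^E$. By Corollary~\ref{01cor}, condition (i) says exactly that $\mbK$ almost surely satisfies extension properties, and condition (ii) says exactly that $\mbK^E$ almost surely satisfies extension properties, so it suffices to show that the almost sure theories $T_\mbK$ and $T_{\mbK^E}$ are $\omega$-categorical at the same time, and that each has a $0$-$1$ law precisely when the other does.

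The heart of the argument is the model-theoretic relationship between $\mcM\models T_\mbK$ and the structure one obtains by adjoining the finitely many sorts coming from $E$. First I would observe that, since $E=\{E_1,\ldots,E_r\}$ is a \emph{finite} set of $\emptyset$-definable equivalence relations, the full finitely sorted set $A\subseteq M^{eq}$ containing the home sort together with all the $E_i$-classes is itself an $\omega$-categorical structure exactly when $\mcM$ is: the relations $R_{E_i}$ and $P_{E_i}$ are $\emptyset$-definable from $V$, so every $n$-type of $(\mcM^{eq}\uhr A)\uhr V'$ is determined by finitely many $V$-types of tuples from the home sort, and conversely. This is the standard fact that passing to finitely many imaginary sorts preserves $\omega$-categoricity in both directions, via Ryll-Nardzewski. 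Thus the countable model of $T_{\mbK^E}$ (if the latter is complete) is interdefinable with the corresponding finite-sort expansion of the countable model of $T_\mbK$, and $\omega$-categoricity of one theory is equivalent to $\omega$-categoricity of the other.

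Next I would handle the $0$-$1$ law and the identification of the almost sure theories. For the direction (i)$\Rightarrow$(ii): assuming $\mbK$ has a $0$-$1$ law with $\omega$-categorical $T_\mbK$, every sentence $\psi$ in $V'$ is logically equivalent, modulo the $\emptyset$-definability of the new relations, to a $V$-sentence $\psi^\ast$ asserting the same fact about the home sort and the $E_i$-classes; here one uses that whether each $E_i$ is an equivalence relation is itself decided almost surely (a $V$-sentence), so almost surely the second bullet of the construction of $\mcN'$ applies and the interpretation of $R_{E_i},P_{E_i}$ is forced. Then $\mu_n^E(\psi)=\mu_n(\psi^\ast)\to 0$ or $1$, giving the $0$-$1$ law for $\mbK^E$, and $T_{\mbK^E}$ is the expansion of $T_\mbK$ by the definitional axioms for the new sorts, hence $\omega$-categorical by the previous paragraph. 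For (ii)$\Rightarrow$(i): the reduct map $\mcN'\mapsto \mcN\uhr V$ pushes the measure $\mu_n^E$ forward to $\mu_n$, and any $V$-sentence $\varphi$ satisfies $\mu_n(\varphi)=\mu_n^E(\varphi)$ since $V\subseteq V'$ and the truth of a $V$-sentence is unchanged by the expansion; so a $0$-$1$ law for $\mbK^E$ restricts to one for $\mbK$, and $\omega$-categoricity of $T_{\mbK^E}$ yields $\omega$-categoricity of the reduct $T_\mbK$ because a reduct of an $\omega$-categorical theory is again $\omega$-categorical.

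The step I expect to be the main obstacle is the bookkeeping in the case where some $E_i$ fails to be an equivalence relation, since the construction then interprets the new symbols as $\emptyset$ rather than as genuine imaginary sorts. I would dispose of this by invoking the remark preceding the proposition: whether each $E_i$ is an equivalence relation is expressible by a $V$-sentence, so under a $0$-$1$ law this is decided with asymptotic probability $0$ or $1$. If all $E_i$ are almost surely equivalence relations, the genuine-sort case applies almost surely and the transfer above goes through cleanly; if some $E_i$ is almost surely \emph{not} an equivalence relation, then almost surely $\mcN'$ is just $\mcN$ with the new symbols empty, so $\mbK^E$ is almost surely (a definitional trivial expansion of) $\mbK$ and the equivalence is immediate. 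The only genuinely delicate point is ensuring completeness of $T_{\mbK^E}$ forces the almost-sure-equivalence-relation dichotomy rather than allowing a positive-probability mixture; but this follows because completeness of $T_{\mbK^E}$ restricts to completeness of its $V$-reduct, i.e. a $0$-$1$ law for $\mbK$, under which the $V$-sentence ``$E_i$ is an equivalence relation'' cannot have probability strictly between $0$ and $1$.
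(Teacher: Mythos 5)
Your proposal is correct in overall strategy but follows a genuinely different route from the paper. The paper works entirely through extension axioms: starting from the axioms $\theta_{k,i}$ for $T_\mbK$ given by Fact \ref{omegathm}, it uses Corollary \ref{exttpcor} to assume each $\theta_{k,i}$ decides the $E$-relations among its variables, relativizes quantifiers to the home sort, adjoins conjuncts of $P_{E_\alpha}$ and $R_{E_\alpha}$ describing how imaginaries attach to home-sort tuples, and then argues (with the almost-sure verification explicitly left to the reader as ``technical and tedious'') that the resulting formulas are almost surely extension axioms for $\mbK^E$; Fact \ref{omegathm} then yields (ii). You instead transfer sentences: quantification over an imaginary of sort $E_i$ becomes quantification over representative tuples, giving for each $V'$-sentence $\psi$ a $V$-sentence $\psi^*$ with $\mcN'\models\psi$ iff $\mcN\models\psi^*$ whenever all $E_i$ are equivalence relations on $\mcN$ --- which, as you note, holds almost surely under (i) because ``each $E_i$ is an equivalence relation'' is a $V$-sentence belonging to the complete theory $T_\mbK$. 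This gives the $0$--$1$ law for $\mbK^E$ at once, identifies $T_{\mbK^E}$ with $Th((\mcM^{eq}\reduct A)\reduct V')$ (so Corollary \ref{meqcor} falls out directly, whereas the paper extracts it from its explicit axiomatization), and $\omega$-categoricity follows from the standard preservation of $\omega$-categoricity under adding finitely many imaginary sorts. Your route replaces the paper's unwritten almost-sure verification by the standard $\mcM$-to-$\mcM^{eq}$ translation lemma; the paper's route, in exchange, produces concrete extension axioms for $\mbK^E$, which is what its later examples and applications exploit.

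One claim in your (ii)$\Rightarrow$(i) direction is false as stated (the paper declares this direction obvious, but your justification needs repair): you assert $\mu_n(\varphi)=\mu_n^E(\varphi)$ for every $V$-sentence $\varphi$ because ``the truth of a $V$-sentence is unchanged by the expansion.'' The passage from $\mcN$ to $\mcN'$ is not merely an expansion by new relation symbols: in the generic case it adds new elements (the imaginaries), over which the quantifiers of $\varphi$ also range. For instance, a $V$-sentence bounding the number of elements, or asserting that any two distinct elements satisfy some $V$-relation, can change truth value from $\mcN$ to $\mcN'$. The repair is routine and uses the same relativization idea you already employ in the other direction: the home sort is uniformly definable in every $\mcN'$ by $\neg\bigvee_{i}P_{E_i}(x)$ (in the degenerate case where the new symbols are interpreted as $\emptyset$ this defines all of $N'$, which is then the home sort anyway), so $\mu_n(\varphi)=\mu_n^E(\varphi^h)$ where $\varphi^h$ relativizes all quantifiers of $\varphi$ to that formula; the $0$--$1$ law and completeness then transfer as you intend. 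The same correction applies to your appeal to ``a reduct of an $\omega$-categorical theory'': $T_\mbK$ is not a vocabulary reduct of $T_{\mbK^E}$, but the theory of the structure induced on the $\emptyset$-definable home sort, and $\omega$-categoricity passes to induced structures on $\emptyset$-definable sets by Ryll-Nardzewski.
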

\begin{proof}
The direction ii) implies i) is obvious so we focus on the case when $T_\mbK$ is $\omega-$categorical and $\mbK$ has a $0-1$ law.  
By Fact \ref{omegathm} we then know that there are $\theta_{k,i}$, $\xi_{k,i}$,$\sigma_k$ and $\tau_{k,i,j}$ so that $\mbK$ satisfies the extension properties using these. In order to prove this theorem we will modify the extension axioms on $\mbK$ so that we get new extension axioms which prove that $\mbK^E$ also satisfies extension properties. Then we use Fact \ref{omegathm} again to get the $0-1$ law and $\omega-$categoricity.\\\indent
Notice that from Corollary \ref{exttpcor} we may assume that for each formula $\theta_{k,i}$, $n_0\in[r]$, tuple $\bar x$ and elements in the tuple $x_{\alpha_1},\ldots,x_{\alpha_m}$,$x_{\alpha_{m+1}},\ldots,x_{\alpha_{2m}}$ we have either
\begin{equation}\label{omeq}
\mcM\models \theta_{k,i}(\bar x)\rightarrow E_{n_0}(x_{\alpha_1},\ldots,x_{\alpha_m},x_{\alpha_{m+1}},\ldots,x_{\alpha_{2m}}) \text{ or}
\end{equation}
\[\mcM\models \theta_{k,i}(\bar x)\rightarrow \neg E_{n_0}(x_{\alpha_1},\ldots,x_{\alpha_m},x_{\alpha_{m+1}},\ldots,x_{\alpha_{2m}}). \]
For each formula $\theta_{k,i}$ create a formula $\theta_{k,i}'$ by for each $\forall x \varphi(x)$ in $\theta_{k,i}$ change it to $\forall x (P_= (x) \rightarrow \varphi(x))$ and for each $\exists x\varphi(x)$ change it to $\exists x (P_=(x)\wedge\varphi(x))$. These new $\theta_{k,i}'$ make $\mbK^E$ satisfy extension properties on the home sort, but we still need something which considers the newly added imaginary sorts. 
Let $A\subseteq M^{eq}$ be the full finitely sorted set which contains all elements representing the equivalence relations in $E$ and the home sort. 
Each tuple $\bar c\in \mcM^{eq}\upharpoonright A$ may be written as $\bar x = \bar a\bar b_1\ldots\bar b_r$ up to permutation where all elements in $\bar a$ are from the home sort, and each element in $b\in\bar b_i$ satisfies $\mcM^{eq}\models P_{E_i}(b)$. From the $\omega-$categoricity we know that there are only a finite amount of tuples $\bar a$ in the home sort, up to type. For each type of a tuple $\bar a$ there are only a finite amount of ways the elements may stand in a relation to an imaginary element.
Hence the number of ways, which we have elements from the home sort satisfying $\theta_{l,j}$ and imaginary elements in relation to the elements in the home sort, is finite for $k-$tuples $\bar a\bar b_1\ldots\bar b_r\in\mcM^{eq}$.
This gives us the ability to create a finite amount of $\theta_{k,i}^e$ extension axioms by letting it stand for the formula which is a conjunction of $\theta_{l,j}', P_{E_\alpha}$  and $R_{E_\alpha}$ for the appropriate parts of a tuple.
Now what we have left to prove in this theorem is that these new formulas $\theta^e$ and corresponding $\tau^e_{k,i,j},\xi^e_{k,i}$ and $\sigma^e_k$ are almost surely true in $\mbK^E$. This may be shown through technical and tedious yet straight forward arguments, where the zero-one law of $\mbK$ and equation (\ref{omeq}) are the key elements in order to handle the imaginary elements. The details are left for the reader.
\end{proof}
\noindent It is clear from the axiomatization in the proof that we get the following corollary.
\begin{cor}\label{meqcor}
Assume $\mbK=(\mbK_n,\mu_n)_{n\in\mbbN}$, $T_\mbK$ is $\omega-$categorical with $\mcM\models T_\mbK$ and $A\subseteq \mcM^{eq}$ is a full finitely sorted set with equivalence relations $E =\{E_1,\ldots,E_r\}$ having classes represented in $A$. Then $Th((\mcM^{eq}\reduct A)\reduct V') = T_{\mbK^E}$.
\end{cor}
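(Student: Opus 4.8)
The plan is to leverage the explicit extension-axiom construction carried out in the proof of Proposition \ref{01eqthm} and to combine it with the characterization of $\omega$-categoricity via extension properties (Fact \ref{omegathm}). First I would observe that since $T_\mbK$ is $\omega$-categorical and, because the structures in $\mbK$ almost surely grow, $T_\mbK$ has no finite models, the Fact from the introduction gives that $T_\mbK$ is complete, so $\mbK$ has a $0$-$1$ law. Proposition \ref{01eqthm} then yields that $\mbK^E$ also has a $0$-$1$ law and that $T_{\mbK^E}$ is $\omega$-categorical; in particular $T_{\mbK^E}$ is complete. On the other side, $Th((\mcM^{eq}\reduct A)\reduct V')$ is complete, being the theory of a single structure. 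Two complete theories in the same vocabulary coincide as soon as they share a model, so it suffices to prove that $(\mcM^{eq}\reduct A)\reduct V'$ is a model of $T_{\mbK^E}$.

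Next I would recall that the proof of Proposition \ref{01eqthm} produces formulas $\theta^e_{k,i}$ (together with the associated $\tau^e_{k,i,j}$, $\xi^e_{k,i}$ and $\sigma^e_k$) which are shown to be almost surely true in $\mbK^E$, hence belong to $T_{\mbK^E}$. These formulas were built directly from the type-isolating formulas $\theta_{k,i}$ of $\mcM$: the quantifiers are relativised to the home sort via $P_=$, and one records, through conjunctions of the predicates $P_{E_\alpha}$ and $R_{E_\alpha}$, exactly how the imaginary elements of the full finitely sorted set $A$ attach to the real tuples. Since the set $A$ in the corollary is precisely the full finitely sorted set adjoining the classes of $E$ to the home sort, the structure $(\mcM^{eq}\reduct A)\reduct V'$ is nothing other than the $\mcN'$-construction of the definition applied to the countable $\mcM$. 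Consequently each $\theta^e_{k,i}$ isolates the type in $(\mcM^{eq}\reduct A)\reduct V'$ of the corresponding mixed real/imaginary tuple, and $(\mcM^{eq}\reduct A)\reduct V'$ satisfies every one of the extension axioms together with the structural sentences $\sigma^e_k$, $\tau^e_{k,i,j}$ and $\xi^e_{k,i}$. In other words, $(\mcM^{eq}\reduct A)\reduct V'$ satisfies the very same extension properties that almost surely hold in $\mbK^E$.

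Finally I would invoke Fact \ref{omegathm}: since both $(\mcM^{eq}\reduct A)\reduct V'$ and the unique countable model of $T_{\mbK^E}$ satisfy the same extension properties, the back-and-forth argument in the proof of that fact produces an isomorphism between them, using that $(\mcM^{eq}\reduct A)\reduct V'$ is countable because we may take $\mcM$ to be the countable model of $T_\mbK$ and adjoining finitely many imaginary sorts keeps it countable. Hence $(\mcM^{eq}\reduct A)\reduct V'$ is a model of $T_{\mbK^E}$, and by completeness of both theories $Th((\mcM^{eq}\reduct A)\reduct V') = T_{\mbK^E}$. The one point requiring genuine care, and the main obstacle, is verifying that the $\theta^e_{k,i}$ really do isolate \emph{all} types of the mixed tuples in $(\mcM^{eq}\reduct A)\reduct V'$; this is exactly where the finiteness observations of the proof of Proposition \ref{01eqthm} (only finitely many real tuples up to type, and only finitely many ways an imaginary can stand in relation to a real tuple) are needed, and where equation (\ref{omeq}) guarantees that the $E_\alpha$-relations among the real coordinates are already decided by $\theta_{k,i}$, so that the reduct to $V'$ loses no type-distinguishing information.
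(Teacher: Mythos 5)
Your proposal is correct and follows essentially the same route as the paper: the paper's proof of this corollary is the single remark that ``it is clear from the axiomatization in the proof'' of Proposition \ref{01eqthm}, and your argument simply unpacks that remark --- both $T_{\mbK^E}$ and $Th((\mcM^{eq}\reduct A)\reduct V')$ are complete and satisfy the very extension axioms $\theta^e_{k,i}$, $\sigma^e_k$, $\tau^e_{k,i,j}$, $\xi^e_{k,i}$ constructed there from the types of mixed real/imaginary tuples, so the back-and-forth of Fact \ref{omegathm} identifies their countable models. Your flagged subtlety (that the $\theta^e_{k,i}$ isolate the types of mixed tuples) is precisely the detail the paper itself defers to the ``technical and tedious yet straightforward'' part of Proposition \ref{01eqthm}, so no new gap is introduced.
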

\begin{rmk}Using the previous proposition we can show the existence of sets of structures whose almost sure theory has an arbitrarily large finite SU-rank.
Assume $(\mbK_m,\mu_m)_{m\in\mbbN}$ has a $0-1$ law with an $\omega-$categorical simple almost sure theory with SU-rank $1$. Examples of such are, among others (see Section \ref{smsection} for more), the random triangle free graph or the random graph. For some $n\in\mbbN$ let $E_n$ be the equivalence relation $(x_1,..,x_n)E_n (y_1,\ldots,y_n)$ if and only if 
\[\big(\bigwedge_{i=1}^n\bigwedge_{j=1}^n (x_i=x_j \wedge y_i=y_j)\big) \vee \big(\bigwedge_{i=1}^n\bigwedge_{j=1}^n (x_i\neq x_j \wedge y_i\neq y_j \wedge x_i=y_i)\big)\]
Let $A\subseteq \mcM^{eq}$ be the full finitely sorted set which contain only the home sort and the equivalence classes for $E_n$. It is easy to verify that $\mcM^{eq}\reduct A$ have SU-rank $n$, since the elements representing the equivalence classes of $E_n$ will have SU-rank $n$. Corollary \ref{meqcor} thus implies that for each $m$, there exists a set of structures $\mbC_m$ with a probability measure $\tau_m$ such that $\mbC= (\mbC_m,\tau_m)_{m\in\mbbN}$ has a $0-1$ law and $T_\mbC$ is $\omega-$categorical and simple with SU-rank $n$. 
\end{rmk}
Proposition \ref{01eqthm} may of course also be used in order to prove $0-1$ laws or get nicer extension axioms for the almost sure theories by, after finding an almost sure equivalence relation $E$, converting from $\mbK$ to $\mbK^E$. 
\begin{exa} Let $\mbK_n$ consist of all labeled bipartite graphs with universe $[n]$ under the uniform measure $\mu_n$. Then $\mbK = (\mbK_n,\mu_n)_{n\in\mbbN}$ has a $0-1$ law and its almost sure theory is $\omega-$categorical by Kolaitis, Prömel, Rothschild \cite{KPR}, but the extension axioms are a bit complicated and speak about an almost surely $\emptyset-$definable equivalence relation $E$ which defines the two parts of a bipartite graph. If we instead extend $\mbK$ to $\mbK^E$, so each bipartite graph $\mcN\in\mbK$ gets two elements which points at the equivalence classes, then the extension axioms suddenly become very simple. We only need to check for elements $x,y\in N'$ if $\mcN'\models \forall z (R_E(x,z) \rightarrow R_E(y,z))$ holds or not. If it holds then no edges can exist between $x$ and $y$, and if it does not hold then we may have edges between them. \end{exa} We may also work in the opposite way. If we have a set of finite structures $\mbK=(\mbK_n,\mu_n)_{n\in\mbbN}$ and can identify some almost sure equivalence relations $E_1,\ldots,E_n$, then in order to find out if there is a $0-1$ law or not, we can transform $\mbK$ in to $\mbK^{\{E_1,\ldots,E_n\}}$ in order to possibly get an easier class to discuss and find out if there is convergence and $\omega-$categoricity or not.

\section{$\omega-$categorical simple theories with $SU-$rank 1}\label{smsection}
\noindent We assume, unless stated otherwise, that the vocabulary in this section is binary and relational.
The main goal of this section is to explore the $\omega-$categorical theories which in addition are simple or $\omega-$stable with SU$-$ rank $1$ and put these theories in the context of the extension properties of the previous section. The equivalence relation defined by $tp_{\mcM^{eq}}(x/acl_{\mcM^{eq}}(\emptyset)) = tp_{\mcM^{eq}}(y/acl_{\mcM^{eq}}(\emptyset)) $ is very important for these theories, however we will consider the abstract properties of it and use it in a more general form.
\begin{defi}\label{bgdef} Let T be a theory. We say that a formula $\xi$ is a \textbf{restricted equivalence relation} for $T$ if for some $k,t\in\mbbN$, $T$ implies that $\xi$ defines an equivalence relation with $k+t$ equivalence classes such that $k$ of the equivalence classes are infinite and $t$ of the equivalence classes have size $1$. The equivalence classes of $\xi$, if any, which have size $1$ are called \textbf{base sets}.
\end{defi}
For restricted equivalence relations we want to be able to fix what atomic diagrams are possible between and inside the classes. To do this we introduce the concept of a spanning formula, which is a formula stating the existence of all possible binary atomic diagrams with respect to equivalence classes.
\begin{defi}\label{spanningdef}
Let $T$ be a theory and $\xi$ a restricted equivalence relation for $T$ with $l$ equivalence classes. We say that a sentence $\gamma$ is \textbf{spanning $\xi$} if $T\models \gamma$ and the following holds: There are numbers $t_1,\ldots,t_l$ and a formula $\gamma_0$ such that $\gamma$ is equivalent with
\[\exists x_{1,1},\ldots,x_{1,t_1},\ldots,x_{l,1},\ldots,x_{l,t_l} \gamma_0.\]
The formula $\gamma_0$ in turn implies that if $m\neq p$ or $i\neq j$ then $x_{p,i}\neq x_{m,j}$. The $\xi-$equivalence class of $x_{m,j}$ is the same as $x_{p,i}$ if and only if $m=p$. Furthermore $\gamma_0$ implies that the atomic diagram of $x_{1,1},\ldots,x_{l,t_l}$ is fixed and for any elements $y,z$ there exists $m,p,i,j$ such that $y$ and $z$ are in the same $\xi-$equivalence class as $x_{m,i}$ and $x_{p,j}$, respectively, and $atDiag(x_{m,i},x_{p,j})= atDiag(y,z)$. 
\end{defi}
The following lemma is a direct consequence of the finiteness and the definitions.
\begin{lma}\label{spanninglma}
If $T$ is a complete theory over a finite vocabulary, then for each restricted equivalence relation $\xi$, there exists a formula $\gamma$ which is spanning $\xi$.
\end{lma}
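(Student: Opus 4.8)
The plan is to read off the required configuration from a single model and then promote it to a consequence of $T$ using completeness. First I would fix a model $\mcM\models T$. By Definition \ref{bgdef}, $\xi$ partitions $M$ into exactly $l=k+t$ classes, of which $k$ are infinite and $t$ are singletons (the base sets); crucially the number of classes is finite. Since the vocabulary is finite and binary, there are only finitely many atomic diagrams of pairs, so as $y,z$ range over $M$ the data consisting of the $\xi$-class of $y$, the $\xi$-class of $z$, and $atDiag(y,z)$ takes only finitely many values.

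Next I would, for each such value that is realized in $\mcM$, pick one witnessing pair, and let $S\subseteq M$ be the (finite) set of all elements occurring in these pairs, enlarged by one representative of each $\xi$-class so that every class is met. Enumerating $S$ class by class gives a tuple $x_{1,1},\dots,x_{1,t_1},\dots,x_{l,1},\dots,x_{l,t_l}$ in which $x_{m,\cdot}$ lists precisely the elements of $S$ in the $m$-th class. These are pairwise distinct, $\xi(x_{m,i},x_{p,j})$ holds iff $m=p$, and by the choice of $S$, for every $y,z\in M$ some pair $(x_{m,i},x_{p,j})$ lies in the same two classes as $(y,z)$ and satisfies $atDiag(x_{m,i},x_{p,j})=atDiag(y,z)$.

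I would then take $\gamma_0$ to be the conjunction of the inequalities $x_{m,j}\neq x_{p,i}$ for $(m,j)\neq(p,i)$; the clauses $\xi(x_{m,i},x_{p,j})$ when $m=p$ and $\neg\xi(x_{m,i},x_{p,j})$ when $m\neq p$; the quantifier-free formula recording the now-fixed atomic diagram of $x_{1,1},\dots,x_{l,t_l}$; and the spanning clause
\[ \forall y\,\forall z \bigvee_{(m,i),(p,j)} \big(\xi(y,x_{m,i}) \wedge \xi(z,x_{p,j}) \wedge \delta_{(m,i),(p,j)}(y,z)\big), \]
where the finite disjunction ranges over all index pairs and $\delta_{(m,i),(p,j)}(y,z)$ is the quantifier-free formula saying that $atDiag(y,z)$ agrees with $atDiag(x_{m,i},x_{p,j})$. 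Putting $\gamma\equiv\exists x_{1,1}\cdots x_{l,t_l}\,\gamma_0$, the formula $\gamma_0$ contains verbatim each condition required by Definition \ref{spanningdef}, so it implies all of them, while the tuple coming from $S$ witnesses $\mcM\models\gamma$. Since $T$ is complete and $\gamma$ holds in $\mcM\models T$, it follows that $T\models\gamma$, so $\gamma$ is spanning $\xi$.

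I expect no genuine obstacle here: the whole argument rests on the two finiteness inputs, namely finitely many classes from Definition \ref{bgdef} and finitely many pair-diagrams from the finite binary vocabulary, which together bound the number of witnesses needed, with completeness doing the final transfer from $\mcM$ to $T$. The one place calling for care is the base sets: a singleton class supplies only a single witness, so I would check that each of the finitely many diagrams it forms with another class is realized by pairing that lone element with a suitable (distinct) witness drawn from the relevant class, which is possible for the infinite classes precisely because they are infinite.
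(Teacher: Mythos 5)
Your proposal is correct and takes precisely the approach the paper intends: the paper gives no written proof, stating only that the lemma is ``a direct consequence of the finiteness and the definitions,'' and your argument---finitely many $\xi$-classes from Definition \ref{bgdef}, finitely many atomic diagrams of pairs from the finite relational vocabulary, a finite witness set chosen in one model, and completeness to transfer the resulting sentence to $T$---is exactly that finiteness argument written out. (Your closing worry about base sets, and the appeal to binarity, are both unnecessary: witnesses are actual pairs from the model, and any finite relational vocabulary admits only finitely many atomic diagrams of pairs.)
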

We will now define the important concept of $\xi-$extension properties. This is essentially saying that between and inside equivalence classes we roll a die to determine binary atomic diagrams among pairs from a predetermined set. The trivial case with $\xi$ only having a single equivalence class would just be a random structure and if we look at only a single symmetric, anti-reflexive relation we get the random graph. 
\begin{defi}\label{xiextax}
Let $\xi(x,y)$ be a formula, $l\in\mbbN$ and $\Delta=\{\delta_{i,j}\}_{i,j\in [l]}$ where each $\delta_{i,j}$ is a non-empty set of binary atomic diagrams. For each $k\in\mbbN$ let $i_k\in\mbbN$. The formulas $\{\theta_{k,i}(y_1,\ldots,y_k) : k\in\mbbN, i\in [i_k]\}$ are called \textbf{$(\xi,\Delta)-$extension axioms} if the following requirements are satisfied. There is a formula $\gamma$ equivalent to
$\exists x_{1,1},\ldots,x_{1,t_1},\ldots,x_{l,1},\ldots,x_{l,t_l} \gamma_0(x_{1,1},\ldots,x_{l,t_l})$ such that $\gamma_0$ implies that for each $i,j\in [l]$,  
$\{atDiag(x_{i,\alpha}, x_{j,\beta}) : \alpha \in [t_i], \beta\in [t_j]\} = \delta_{i,j}$
and $\xi(x_{i,\alpha},x_{j,\beta})$ holds if and only if $i=j$.
For each $k\in \mbbN, j_1,\ldots,j_k\in [l]$ and collection $\{\eta_{\alpha,\beta}\}_{\alpha,\beta\in \{j_1,\ldots,j_k\}}$ such that $\eta_{\alpha,\beta}\in\delta_{\alpha,\beta}$ there is $j\in [i_k]$ and a formula $\theta_{k,j}'$ such that $\theta_{k,j}'(y_1,\ldots,y_k,x_{1,1},\ldots x_{l,t_l})$ implies that for each $r,s\in [k]$, $atDiag(y_{r},y_{s}) = \eta_{i_{r},i_{s}}$ and $\xi(y_{r},x_{r,1})$ hold.
For each $k$ and $i\in[i_k]$, $\theta_{k,i}(y_1,\ldots,y_k)$ is equivalent to the formula 
\[\exists x_{1,1},\ldots,x_{1,t_1},\ldots,x_{l,1},\ldots,x_{l,t_l} \big(\gamma_0(x_{1,1},\ldots, x_{l,t_l}) \wedge \]\[\theta_{k,i}'(y_1,\ldots,y_k,x_{1,1},\ldots,x_{l,t_l})\big).\] 
We say that a theory $T$ satisfies \textbf{$(\xi,\Delta)-$extension properties} if $T$ implies that $\xi$ is a bounded equivalence relation with $l$ equivalence classes and $T$ satisfies extension properties using the $(\xi,\Delta)-$extension axioms as extension axioms according to Definition \ref{thetadef} with $\theta_{k,i}$ associated to $\theta_{k+1,j}$ if $\theta_{k,i}'$ is a subformula of $\theta_{k+1,j}'$. We may use the term \textbf{$\xi-$extension properties} to indicate $(\xi,\Delta)-$extension properties for some set $\Delta$ containing sets of binary atomic diagrams.
\end{defi}
Although the definition may seem overly technical, these kind of extension properties have been used before. We give a few examples to showcase this and to display how the three previous definitions work in practice.
\begin{exa}\label{partexa}
In \cite{C} Compton looked at $\mbK_n$ as consisting of all (labeled) partial orders of size $n$ and showed that $\mbK =(\mbK_n,\mu_n)_{n\in\mbbN}$ has a $0-1$ law if $\mu_n$ is the uniform measure. This proof was done by first using a result by Kleitman and Rothschild \cite{KR}, who proved that almost surely all partial orders have height exactly $3$ i.e. we may divide the partial orders in to a top, a bottom and a middle layer of elements. This property may be described by an $\emptyset-$definable equivalence relation $\xi$ for $T_\mbK$, which thus is restricted with no base sets. Compton then used the following properties:
\begin{itemize}
\item For any finite disjoint sets $X,Y$ of middle elements, there there are elements $a,b$ such that $a$ is greater than each element in $X$, but unrelated to $Y$ and $b$ is less than each element in $X$, but unrelated to $Y$.
\item For each disjoint set $X_0,Y_0$ of top elements and $X_1,Y_1$ of bottom elements there is an element $c$ such that $c$ is in the middle layer between $X_0$ and $X_1$, but unrelated to $Y_0$ and $Y_1$.
\end{itemize}
Put into the terms of this article, Compton showed that $\mbK$ almost surely satisfy $\xi-$extension properties. It thus becomes clear from Theorem \ref{simpthm} that $T_\mbK$ is $\omega-$categorical and simple with SU-rank $1$. This is a sharp contrast to the homogeneous partial order, generated by taking the Fra\"{i}ss\'e limit of $\mbK$, which is clearly not simple since it satisfies the strict order property. The same phenomena has been noted in the sets of structures studied by Koponen \cite{K} and Mubayi and Terry \cite{MT}, however the general question when and why the Fra\"iss\'e limit and the probabilistic limit are the same remains open.
\end{exa}
\begin{exa}\label{nonrigidexa}
In \cite{AK2} the author together with Koponen showed that the set of all finite non-rigid structures $\mbK = (\mbK_n,\mu_n)_{n\in\mbbN}$ (structures with non-trivial automorphism group) equipped with the uniform measure $\mu_n$, do not have a $0-1$ law but a convergence law. Let $\mbS(\mcA, H)\subseteq \mbK$ be all structures in which the nonrigid finite structure $\mcA$ is embeddable into and which have an automorphism group containing $H$ as a subgroup such that all the elements in $\mcA$ are moved by some automorphism. $\mbS(\mcA, H)$ is shown to have a $0-1$ law by proving that $\mcA$ is almost surely definable and then creating $\xi-$extension axioms. The formula $\xi$ in this case will describe wether what relation it has to $\mcA$, hence distinguishing elements in $\mcA$. Thus a structure $\mcM$ satisfying the almost sure theory of $\mbS(\mcA, H)$ is $\omega-$categorical, simple with $SU-$rank 1, with trivial pregeometry and $acl(\emptyset)= \mcA$. Moreover if $X$ is the union of all infinite equivalence classes of $\xi$ then $\mcM\reduct X$ forms the structure which satisfies the almost sure theory of $\mbC_n$ consisting of all structures of size $n$ under the uniform measure. 
\\\indent The convergence law of $\mbK$ is then determined by looking at appropriate different $\mcA$ and $H$ and take the union of these $\mbS(\mcA, H)$. Thus the convergence law is determined by, in the almost sure theory of $\mbS(\mcA, H)$, what structure there is in $acl(\emptyset)$ and what atomic diagrams there are between $acl(\emptyset)$ and the rest of the structure.
\end{exa}

\begin{thm}\label{simpthm}
If V is binary and T is a complete theory then the following are equivalent.
\begin{itemize}
\item[(i)] $T$ is $\omega-$categorical, supersimple with SU-rank $1$ and has trivial pregeometry
\item[(ii)] There is a restricted equivalence relation $\xi$ for $T$
such that $T$ satisfies $\xi-$extension properties.
\end{itemize}
\end{thm}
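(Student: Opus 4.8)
The plan is to prove the two implications separately, treating $\omega$-categoricity, trivial pregeometry, and supersimplicity of rank $1$ as three features to be transferred back and forth, with type-amalgamation (the independence theorem, Fact \ref{indthm}, in one direction and a Kim--Pillay verification in the other) as the common engine. For (i) $\Rightarrow$ (ii) I would first take $\xi$ to be the $\es$-definable equivalence relation $tp_\meq(x/acl_\meq(\es))=tp_\meq(y/acl_\meq(\es))$ on the home sort, which exists by Fact \ref{eqfact}. By Ryll--Nardzewski there are finitely many classes; a class is a singleton exactly when its elements lie in $acl_\meq(\es)$ (each such element is isolated over $acl_\meq(\es)$ by a formula naming it), and is infinite otherwise, so $\xi$ is a restricted equivalence relation in the sense of Definition \ref{bgdef}, the singletons being the base sets. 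I would then read off $\Delta=\{\delta_{i,j}\}$ by letting $\delta_{i,j}$ be the set of binary atomic diagrams realized by pairs $(a,b)$ with $a$ in class $i$, $b$ in class $j$ (finite by $\omega$-categoricity), obtain a spanning formula $\gamma$ by Lemma \ref{spanninglma}, and define $\theta_{k,i}$ as in Definition \ref{xiextax} to assert the classes and all pairwise atomic diagrams of $y_1,\dots,y_k$ together with the spanning witnesses. Because the vocabulary is binary, each $\theta_{k,i}$ fixes the full atomic diagram of the tuple, so property (\ref{thetastrong}) and the covering sentences $\sigma_k$ hold once I list exactly the realized class-and-diagram combinations.

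The heart of this direction is the existence clauses $\tau_{k,i,j}$. Here trivial pregeometry does the preparatory work: since $acl$ is trivial, any two distinct elements outside $acl(\es)$ are independent, so every diagram in $\delta_{i,j}$ between infinite classes is already realized by an \emph{independent} pair, and each requested diagram $\eta$ to a given $y_r$ can be witnessed by some $z_r\ind y_r$ in the target class over $acl_\meq(\es)$. Absorbing the base-set coordinates into the parameter set $acl_\meq(\es)$, the surviving $y_r$ are pairwise independent (again by triviality), and all the $z_r$ share a type over $acl_\meq(\es)$ (they lie in a single target class), so Fact \ref{indthm} applied with $b_r=y_r$, $a_r=z_r$ yields a single $z$ with $tp_\meq(z/y_r\cup acl_\meq(\es))=tp_\meq(z_r/y_r\cup acl_\meq(\es))$ for every $r$. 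Since binary vocabulary reduces the tuple's atomic diagram to its pairwise parts, controlling each $atDiag(z,y_r)$ suffices to place $z$ into the pattern $\theta_{k+1,j}$, which is exactly the demanded extension; the clauses $\xi_{k,i}$ are immediate as any new element falls into some listed pattern. Thus $T$ satisfies $(\xi,\Delta)$-extension properties.

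For (ii) $\Rightarrow$ (i), $\omega$-categoricity is immediate from Fact \ref{omegathm}, since $\xi$-extension properties are in particular extension properties. Next I would prove the combinatorial identity $acl(X)=X\cup acl(\es)$ directly: the $\es$-definable predicate ``lies in a size-$1$ $\xi$-class'' has precisely the base-set elements as its finitely many solutions, so these lie in $acl(\es)$; and for $a$ in an infinite class with $a\notin X$, the extension axioms let me repeatedly adjoin fresh elements of the same class with the same atomic diagrams to $X$, producing infinitely many realizations of $tp(a/X)$, whence $a\notin acl(X)$. This identity makes $(M,acl)$ a trivial pregeometry outright (exchange being vacuous). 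Finally, for supersimplicity of rank $1$ I would set $A\ind_C B$ to mean $acl(AC)\cap acl(BC)=acl(C)$ and verify the Kim--Pillay axioms: invariance, monotonicity, symmetry, transitivity, finite and local character are routine for a trivial pregeometry, while the one substantial axiom, independent amalgamation over models, is supplied by the same use of the $\xi$-extension axioms as above (the ``die-rolling'' realizes the amalgamated type in the countable model). By Kim--Pillay $T$ is simple with $\ind$ its nonforking relation; since $a\ind_C B$ holds exactly when $a\notin acl(BC)$, every forking extension is algebraic, forcing $SU$-rank exactly $1$.

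I expect the main obstacle to be precisely this amalgamation step in its two guises: in (i) $\Rightarrow$ (ii), lining up the hypotheses of Fact \ref{indthm} so that the prescribed pairwise data $\{\eta_{\alpha,\beta}\}$ can be realized simultaneously by one independent element, while tracking the spanning witnesses and all distinctness constraints; and in (ii) $\Rightarrow$ (i), deriving the full independence theorem for the combinatorially defined $\ind$ from the purely syntactic extension axioms so that Kim--Pillay applies. The careful treatment of the base sets as parameters throughout, and the reduction of tuple-types to pairwise atomic diagrams, which is exactly where the binary hypothesis is indispensable, are the points where the argument is delicate rather than deep.
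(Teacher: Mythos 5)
Your proposal is correct, and its first half coincides with the paper's own argument: the paper's Lemma \ref{simpthmlma} proves (i) $\Rightarrow$ (ii) exactly as you do, taking $\xi$ to be equality of types over $acl_{\meq}(\es)$, reading off $\Delta$ from realized pairwise atomic diagrams, and using triviality of the pregeometry to make the candidate witnesses pairwise independent so that the independence theorem (Fact \ref{indthm}) produces the single extending element; the binary hypothesis enters precisely where you say it does. The difference lies in (ii) $\Rightarrow$ (i). The paper (Lemma \ref{simplma}) gets $\omega$-categoricity and triviality of $acl$ just as you do, but it establishes supersimplicity with $SU$-rank $1$ \emph{directly}: assuming some $\varphi(\bar x,\bar b)$ in $tp(\bar a/A)$ divides over $A_0=\bar a\cap acl(A)$, it takes realizations $\bar c_j\models\varphi(\bar x,\bar b_j)$ along the indiscernible sequence, applies pigeonhole to find $r$ of them with identical atomic diagrams and $\xi$-classes, and then uses the $\xi$-extension axioms together with Corollary \ref{exttpcor} to produce one tuple $\bar c$ realizing all $r$ instances, contradicting $r$-inconsistency. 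You instead define $A\ind_C B$ by $acl(AC)\cap acl(BC)=acl(C)$ and invoke Kim--Pillay. Both routes work, and both ultimately rest on the same amalgamation power of the extension axioms; the paper's version is more elementary and self-contained (no Kim--Pillay machinery, dividing is refuted by hand), while yours is more systematic, identifies forking explicitly as algebraicity (which makes the rank computation immediate), and transfers to other settings. One caveat on your version: the independence theorem over a model $M$ cannot in general be witnessed ``in the countable model'' as you write, since the amalgamated type is over an infinite set and will typically be omitted there; the correct statement is that every finite fragment is realized in $M$ via the extension axioms, so the union type is consistent by compactness and is realized in an elementary extension, which is what Kim--Pillay requires. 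With that repair, and with the extension (existence) axiom added to your list of ``routine'' verifications -- it too needs the extension properties, not just triviality -- your argument goes through.
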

We will prove this theorem through the direct application of Lemma \ref{simpthmlma} and Lemma \ref{simplma}.
\begin{lma}\label{simpthmlma}
Assume $V$ is binary, $T$ is $\omega-$categorical, supersimple with SU-rank $1$ and with trivial pregeometry. Let $\mcM\models T$. If $\xi(x,y)$ is the equivalence relation defined by $tp(x/ acl_{\mcM^{eq}}(\emptyset))= tp(y/ acl_{\mcM^{eq}}(\emptyset))$ then $\xi$ is restricted and $T$ satisfies $\xi-$extension properties.
\end{lma}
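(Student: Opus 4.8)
The plan is to fix $\mcM \models T$ and the equivalence relation $\xi$ defined by $tp_{\meq}(x/acl_\meq(\es)) = tp_{\meq}(y/acl_\meq(\es))$, and verify the two things we need: first that $\xi$ is restricted in the sense of Definition \ref{bgdef}, and second that $T$ satisfies $\xi$-extension properties in the sense of Definition \ref{xiextax}. By Fact \ref{eqfact} the relation $\xi$, restricted to the home sort, is $\es$-definable in $\mcM$, so it is genuinely a formula over our binary vocabulary as required. Since $T$ is $\omega$-categorical, Ryll-Nardzewski gives only finitely many $1$-types over $\es$, hence $\xi$ has finitely many classes, say $l$ of them.

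\textbf{Verifying that $\xi$ is restricted.}
I must show each $\xi$-class is either infinite or a singleton, with the singletons being exactly the base sets. The key point is the trivial pregeometry together with $SU$-rank $1$: an element $a$ lies in $acl(\es)$ precisely when its type over $\es$ is algebraic, and by trivial pregeometry $acl(X) = X \cup acl(\es)$ for all $X$. A class that meets $acl(\es)$ must (by homogeneity of types under $\Aut(\mcM)$, via Corollary \ref{exttpcor} applied through Fact \ref{omegathm}) consist of a single algebraic element, giving a base set. A class disjoint from $acl(\es)$ consists of non-algebraic elements; since $\mcM$ is countable and $\omega$-categorical such a non-algebraic $1$-type is realized infinitely often, so the class is infinite. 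Thus every class is a singleton base set or infinite, and $\xi$ is restricted with $t$ base sets and $k$ infinite classes for appropriate $k,t$.

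\textbf{Constructing the $(\xi,\Delta)$-extension axioms.}
For the infinite classes I would read off, for each pair $(i,j)$ of class indices, the set $\delta_{i,j}$ of all binary atomic diagrams $atDiag(a,b)$ that occur for some $a$ in class $i$ and $b$ in class $j$; Lemma \ref{spanninglma} produces a spanning formula $\gamma$ witnessing that all these diagrams are realized, and $\gamma_0$ records the fixed atomic diagram on a set of representatives. The candidate extension axioms $\theta_{k,i}$ are then exactly the formulas of Definition \ref{xiextax}: they assert, relative to such a spanning tuple, a consistent choice $\eta_{\alpha,\beta} \in \delta_{\alpha,\beta}$ of diagrams among the new points. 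Because $T$ is $\omega$-categorical these formulas isolate the $k$-types (again via Fact \ref{omegathm} and Corollary \ref{exttpcor}), so $T \models \sigma_k$ and property (\ref{thetastrong}) holds automatically. The substantive verification is that $T \models \tau_{k,i,j}$ and $T \models \xi_{k,i}$, i.e. that every consistent combinatorial pattern can actually be realized by a new element in $\mcM$.

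\textbf{The main obstacle: realizing extensions via the independence theorem.}
The heart of the argument, and where I expect the real work, is establishing the extension statements $\tau_{k,i,j}$. Given a tuple satisfying $\theta_{k,i}$ and a prescribed pattern $\{\eta_{\alpha,\beta}\}$ of diagrams to a new point, I would realize the new element using Fact \ref{indthm}, the independence theorem. The $SU$-rank $1$, trivial-pregeometry hypothesis forces any two distinct non-algebraic points to be independent (since $a \notin acl(b) = \{b\} \cup acl(\es)$ whenever $a \neq b$ and $a \notin acl(\es)$), so the independence hypotheses $b_i \ind b_j$ of Fact \ref{indthm} hold for the existing points. Over each individual point one realizes the single desired atomic diagram $\eta$, and the type-equality hypothesis of Fact \ref{indthm} is exactly membership in a fixed $\xi$-class; the independence theorem then amalgamates these one-point requirements into a single $c$ realizing all of them simultaneously and lying in the correct class. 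Verifying that the binary atomic diagram between $c$ and each $b_i$ is controlled purely by the two $\xi$-classes involved is where binarity of the vocabulary is essential, since it guarantees that the quantifier-free type of $c$ over $\{b_1,\ldots,b_k\}$ is determined by the pairwise diagrams. Once $\tau_{k,i,j}$ and the companion statement $\xi_{k,i}$ are in hand, Definition \ref{xiextax} is satisfied and the lemma follows.
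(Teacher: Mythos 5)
Your proposal is correct and takes essentially the same route as the paper's own proof: let $\Delta$ consist of the atomic diagrams actually occurring between the $\xi$-classes, observe that $\sigma_k$, $\xi_{k,i}$ and property (\ref{thetastrong}) are immediate from the construction, and prove $\tau_{k,i,j}$ by first realizing the prescribed diagram over each point of the tuple separately and then amalgamating these one-point extensions with the independence theorem (Fact \ref{indthm}), with triviality of the pregeometry supplying the independence hypotheses and common $\xi$-class membership supplying the type-equality hypothesis, exactly as in the paper. One minor caution: your appeals to Corollary \ref{exttpcor} (for the singleton classes and for isolation of types) would be circular at that stage, but they are also unnecessary, since an element of $acl(\emptyset)$ lies in $acl_{\mcM^{eq}}(\emptyset)$ and is therefore alone in its $\xi$-class, and every tuple in $\mcM$ satisfies one of the constructed axioms simply because its pairwise diagrams belong to the corresponding sets $\delta_{i,j}$ by definition.
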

\begin{proof}
Note that, since $SU(\mcM) = 1$, $\xi$ has only a finite amount of equivalence classes where all elements are inside an equivalence class which is either infinite or of size one. Thus $\xi$ is a restricted equivalence relation in $T$. For the rest of this proof assume $\xi$ has $l$ equivalence classes and enumerate them from $1$ to $l$. For each $i,j\in [l]$ let $\delta_{i,j}$ be the set of all binary atomic diagrams existing between elements in class $i$ and class $j$ (or between elements inside class $i$ if $i=j$) and put $\Delta=\{\delta_{i,j}\}_{i,j\in[l]}$. Using $\Delta$ and $\xi$ we may now create $(\Delta,\xi)-$extension axioms and it thus remains to prove that $Th(\mcM)$ satisfies $(\Delta,\xi)-$extension properties. We will use the terminology from Definition \ref{thetadef} in order to do the proof.
\\\indent It is clear from the definition of $\theta_{k,i}$ that for each $k\in\mbbN$ and $i\in [i_k]$,  $\mcM\models\sigma_{k} \wedge \xi_{k,i}$. Assume that $\mcM\models\theta_{k,i}(a_1,\ldots,a_k)$ and $j$ is an associated number to $i$. If $a \in acl(\emptyset)$ and $d_1,d_2\notin acl(\emptyset)$ but $\mcM\models \xi(d_1,d_2)$ then $d_1$ and $d_2$ have the same atomic diagram to $a$, and this fact is expressed by $\gamma$. We may thus assume without loss of generality that $\theta_{k+1,j}(y_1,\ldots, y_k,y_{k+1})$ implies that none of $y_1,\ldots,y_{k+1}$ is in the base set of $\xi$ i.e. in $acl(\emptyset)$. Further assume that $p$ is such that $\theta_{k+1,j}(y_1,\ldots,y_{k+1})$ implies that $\xi(x_{p,1},y_{k+1})$ hold.
That $\mcM\models\theta_{k,i}(a_1,\ldots,a_k)$ holds implies that for some element $d_{p,1}$, witnessing $x_{p,1}$ and each $s\in [k]$ there are elements $d_{m,\alpha_s}, d_{p,\beta_s}$ (witnessed by $\gamma$) such that $\mcM\models \xi(d_{p,\beta_s}, d_{p,1})\wedge \xi(a_s,d_{m,\alpha_s})$ and $atDiag(d_{m,\alpha_s},d_{p,\beta_s})= atDiag(y_s,y_{k+1})$, as implied by $\theta_{k+1,j}$. For each $s\in[k]$, $tp(a_s/ acl_{\mcM^{eq}}(\emptyset))= tp(d_{m,\alpha_s}/ acl_{\mcM^{eq}}(\emptyset))$. Thus there exists $c_1,\ldots,c_k$ such that for each $s\in[k]$, $atDiag(a_s,c_s) = atDiag(d_{m,\alpha_s},d_{p,\beta_s}) = atDiag(y_s,y_{k+1})$ and $\mcM\models \xi(c_s,d_{m,1})$. We may conclude that for each $s,r\in[k]$, $tp(c_s/ acl_{\mcM^{eq}}(\emptyset))= tp(c_r/ acl_{\mcM^{eq}}(\emptyset))$ and the distinct elements $c_1,\ldots,c_k,a_1,\ldots,a_k$ are all independent since $acl$ is trivial. The independence theorem (Fact \ref{indthm}) then implies that there exists an element $c$ such that $\mcM\models \xi(c,d_{m,1})$ and for each $s\in [k]$,$atDiag(a_s,c_s) = atDiag(a_s,c) = atDiag(y_s,y_{k+1})$. It follows that $\mcM\models \theta_{k+1,j}(a_1,\ldots,a_k,c)$ and hence we have shown that $\mcM\models \tau_{k,i,j}$, thus $T$ satisfies the $\xi-$extension properties.
\end{proof}From the previous proof we may deduce the following corollary which will be useful later.
\begin{cor}\label{simpcor}
Assume $V$ is binary, T is simple, $\omega-$categorical, SU(T)=1 and $acl$ is trivial.  Let $\mcM\models T$, let $\xi$ be the equivalence relation defined by $tp(x/acl_{\mcM^{eq}}(\emptyset)) = tp(y/acl_{\mcM^{eq}}(\emptyset))$, and $\Delta$ be the set of all sets $\delta_{i,j}$ of atomic diagrams between equivalence class $i$ and $j$. Then the set of $(\xi,\Delta)-$extension axioms axiomatizes $T$.
\end{cor}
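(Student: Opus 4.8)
The statement that ``the set of $(\xi,\Delta)$-extension axioms axiomatizes $T$'' should be read as follows: let $\Sigma$ be the $V$-theory whose axioms are the sentences witnessing that $\xi$ is a restricted equivalence relation with the prescribed numbers of infinite and singleton classes, the spanning sentence $\gamma$, together with the sentences $\sigma_k$, $\xi_{k,i}$ and $\tau_{k,i,j}$ (in the sense of Definitions \ref{thetadef} and \ref{xiextax}) built from the $(\xi,\Delta)$-extension axioms $\theta_{k,i}$. The plan is to show that $\Sigma$ and $T$ have the same logical consequences, and the route is the standard one for deducing completeness from $\omega$-categoricity: prove that every axiom of $\Sigma$ is a consequence of $T$, prove that $\Sigma$ is itself complete, and then invoke the elementary fact that a complete theory contained in a consistent theory must coincide with it.

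First I would observe that the inclusion $\Sigma\subseteq T$ (meaning that each axiom of $\Sigma$ is a consequence of $T$) is already contained in the proof of Lemma \ref{simpthmlma}. Run with the very $\xi$ and $\Delta$ named in the corollary, that proof verifies $\mcM\models\sigma_k\wedge\xi_{k,i}$ for all $k,i$ and, using the independence theorem (Fact \ref{indthm}) together with triviality of $acl$, that $\mcM\models\tau_{k,i,j}$ for each associated number $j$; the distinctness requirement, the strongness clause (\ref{thetastrong}) and the spanning sentence $\gamma$ hold by the very construction of the $(\xi,\Delta)$-extension axioms. Since $T$ is complete and $\mcM\models T$, each of these sentences lies in $T$, so every consequence of $\Sigma$ is a consequence of $T$.

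Next I would establish completeness of $\Sigma$. By construction every model of $\Sigma$ satisfies the extension properties of Definition \ref{thetadef} with the $\theta_{k,i}$ as extension axioms, so Fact \ref{omegathm} yields that $\Sigma$ is $\omega$-categorical. Moreover, iterating the extension sentences $\tau_{k,i,j}$, starting from the base case $\tau_{0,1,j}$, produces in any model of $\Sigma$ an infinite sequence of pairwise distinct elements, so $\Sigma$ has no finite models. The elementary fact recorded in the introduction, that a theory categorical in some infinite power and without finite models is complete, then shows $\Sigma$ is complete.

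Finally I would combine the two steps: $\Sigma$ is complete and all of its consequences lie in $T$, while $T$ is consistent, so if some $\varphi\in T$ failed to follow from $\Sigma$ then $\neg\varphi$ would follow from $\Sigma$ and hence lie in $T$, contradicting consistency. Thus $\Sigma$ and $T$ have the same consequences, i.e.\ the $(\xi,\Delta)$-extension axioms axiomatize $T$. I expect the only genuinely delicate point to be bookkeeping rather than mathematics: one must check that $\Sigma$, as an abstract theory, really meets every clause of Definition \ref{thetadef} (in particular the distinctness and atomic-diagram clauses and the existence of associated numbers), so that Fact \ref{omegathm} applies to arbitrary models of $\Sigma$ and not merely to $\mcM$. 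All of this is immediate from the shape of the $(\xi,\Delta)$-extension axioms, and the substantive work was already carried out in Lemma \ref{simpthmlma}.
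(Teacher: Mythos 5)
Your proof is correct and follows essentially the same route the paper intends: the paper deduces this corollary directly from the proof of Lemma \ref{simpthmlma} (which shows every $(\xi,\Delta)$-extension sentence is a consequence of $T$), combined with Fact \ref{omegathm} and the Vaught-test fact from the introduction to see that these sentences already form a complete theory, exactly as you argue. Your explicit construction of $\Sigma$, the check that it has no finite models, and the bookkeeping verification of the clauses of Definition \ref{thetadef} merely spell out what the paper leaves implicit.
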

To prove the second direction of Theorem \ref{simpthm} we create a small lemma. It is clear from the proof of this lemma that $acl(\emptyset)$ of any structure satisfying $\xi-$extension properties coincide with the base sets of $\xi$. Note that we do not use that we are working over a binary vocabulary explicitly in the proof and thus if we had defined extension properties for general vocabularies then this Lemma would still hold.
\begin{lma}\label{simplma} If there exists a restricted equivalence relation $\xi$ for $T$ such that $T$ satisfies $\xi-$extension axioms then $T$ is $\omega-$categorical, supersimple with $SU-$rank 1 and has trivial pregeometry.
\end{lma}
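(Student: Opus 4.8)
The plan is to read off the four conclusions in the order $\omega$-categoricity, triviality of the algebraic closure, supersimplicity with $SU$-rank $1$, and finally triviality of the pregeometry. First, $\omega$-categoricity is immediate: by Definition \ref{xiextax} the hypothesis that $T$ satisfies $\xi$-extension properties includes that $T$ satisfies extension properties in the sense of Definition \ref{thetadef}, with the $(\xi,\Delta)$-extension axioms serving as the $\theta_{k,i}$, so Fact \ref{omegathm} applies. I would also record, via Corollary \ref{exttpcor}, that the complete type of a finite tuple is determined by the extension axiom it satisfies; and since the $(\xi,\Delta)$-extension axioms are constructed so that $\theta_{k,i}$ prescribes the $\xi$-class of each $y_r$ together with the pairwise atomic diagram $atDiag(y_r,y_s)$ of every pair, it follows that $\mathrm{tp}(\bar a)$ is determined by the $\xi$-class of each entry of $\bar a$ and these pairwise atomic diagrams. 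This \emph{reduction of types to $2$-types} is the structural fact everything else rests on, and it is here (and only here) that binarity enters, through Definition \ref{xiextax} rather than through the ambient vocabulary.

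Next I would compute the algebraic closure and show it is trivial. A base set is a singleton $\xi$-class; since there are only finitely many $\xi$-classes and each type over $\emptyset$ is isolated, every base-set element is fixed by $\Aut(\mcM)$ and hence lies in $\acl(\emptyset)$. Conversely, for $a$ in an infinite $\xi$-class and any finite $C$ (enumerated by $\bar c$), I would produce infinitely many realizations of $\mathrm{tp}(a/C)$: given finitely many such elements $a'_1,\dots,a'_n$, the $\xi$-extension axioms permit adjoining a further $a'_{n+1}$ lying in the same $\xi$-class as $a$ and with the same atomic diagram to each element of $C$ as $a$ has, whereupon Corollary \ref{exttpcor} gives $\mathrm{tp}(\bar c\,a'_{n+1})=\mathrm{tp}(\bar c\,a)$, so $a'_{n+1}\models\mathrm{tp}(a/C)$. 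Thus $a\notin\acl(C)$, and with finite character this yields $\acl(X)=X\cup\acl(\emptyset)$ for every $X$, where $\acl(\emptyset)$ is exactly the union of the base sets.

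The main work is supersimplicity with $SU$-rank $1$. Working in a monster model I would declare $\bar a$ and $\bar b$ \emph{independent over} $C$ when $\acl(C\bar a)\cap\acl(C\bar b)=\acl(C)$, which by the previous paragraph amounts to $\bar a\cap\bar b\subseteq C\cup\acl(\emptyset)$, and verify the Kim--Pillay axioms so as to identify this relation with non-forking. Invariance, monotonicity, base monotonicity, transitivity, symmetry, and finite character are formal consequences of the $\acl$-intersection description; local character even holds with a \emph{finite} base (take $C=\bar a\cap\bar b$), which is what will deliver supersimplicity. The extension axiom and the independence (amalgamation) axiom are where the genericity of $\xi$-extension properties is needed: given algebraically closed $C$, tuples $\bar a_1\equiv_C\bar a_2$, and $\bar b_1,\bar b_2$ independent over $C$ with each $\bar a_i$ independent from $\bar b_i$ over $C$, I must find $\bar a$ with $\bar a\equiv_{C\bar b_1}\bar a_1$, $\bar a\equiv_{C\bar b_2}\bar a_2$, and $\bar a$ independent from $\bar b_1\bar b_2$ over $C$. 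Here the reduction to $2$-types is decisive: I only have to prescribe the atomic diagram of each pair drawn from $\bar a\cup C\cup\bar b_1\cup\bar b_2$, the prescription is consistent because $\bar a_1\equiv_C\bar a_2$ and $\bar b_1,\bar b_2$ are disjoint over $C$, and the $\xi$-extension axioms assert precisely that such a generic configuration is realized. I expect this amalgamation step to be the main obstacle, and I would stress that it must be established directly from the extension axioms, \emph{not} by invoking Fact \ref{indthm}, which already presupposes the simplicity we are trying to prove.

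Granting the Kim--Pillay criterion, $T$ is simple and the relation above is non-forking, and $SU$-rank $1$ then falls out of triviality of $\acl$: if $\mathrm{tp}(a/C)$ is non-algebraic and $D\supseteq C$, a forking extension forces $a\in\acl(D)\setminus\acl(C)=D\setminus\acl(C)$, i.e.\ $a\in D$, so the extension is algebraic. Hence every non-algebraic $1$-type has $SU$-rank exactly $1$, and $T$ is supersimple of $SU$-rank $1$. Finally, Fact \ref{simppregfact} now guarantees that $(M,\acl)$ is a pregeometry, and the computation $\acl(X)=X\cup\acl(\emptyset)$ shows it is trivial, with $\acl(\emptyset)$ equal to the union of the base sets, as announced.
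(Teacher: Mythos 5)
Your architecture for the hard part---verify the Kim--Pillay axioms for the relation $acl(C\bar a)\cap acl(C\bar b)=acl(C)$---is a genuinely different route from the paper's, and your $\omega$-categoricity and trivial-$acl$ arguments do match the paper's. However, your amalgamation step is false as stated, and this is a real gap, not a presentational one. Consider the random bipartite graph with named parts: the vocabulary is $\{Q,E\}$, $Q$ is an equivalence relation with two infinite classes $P_1,P_2$, there are no edges inside a class, and cross-class pairs generically realize both edge and non-edge. This theory satisfies $(Q,\Delta)$-extension properties (with $\delta_{1,1}=\delta_{2,2}$ a single atomic diagram and $\delta_{1,2}$ containing two), and $acl(\emptyset)=\emptyset$, so $C=\emptyset$ is algebraically closed in your sense. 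Choose distinct $a_1,b_1,b_2\in P_1$ and $a_2\in P_2$. Then $tp(a_1)=tp(a_2)$ (an automorphism interchanges the two classes), $b_1$ and $b_2$ are independent over $C$ in your sense, and each $a_i$ is independent from $b_i$ over $C$; yet no element $a$ can realize both $tp(a_1/b_1)$, which contains $Q(x,b_1)$, and $tp(a_2/b_2)$, which contains $\neg Q(x,b_2)$, because $Q(b_1,b_2)$ holds. The precise point of failure is your sentence ``the prescription is consistent because $\bar a_1\equiv_C\bar a_2$'': equality of types over a (real) algebraically closed set does \emph{not} force corresponding entries of $\bar a_1$ and $\bar a_2$ into the same $\xi$-classes, because the classes are coded in $acl_{\meq}(\emptyset)$, not in $acl(\emptyset)$. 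This is exactly the strong-type versus type distinction; note that the paper's Fact \ref{indthm} is carefully stated with the hypothesis $tp_\meq(a_i/acl_\meq(\emptyset))=tp_\meq(a_j/acl_\meq(\emptyset))$ rather than $tp(a_i)=tp(a_j)$.

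The gap is repairable within your framework. The Kim--Pillay criterion only requires the independence theorem over \emph{models}, and over a model $M$ the obstruction disappears: the theory says $\xi$ has exactly $l$ classes, so every $\xi$-class of the monster meets $M$, and then $\bar a_1\equiv_M\bar a_2$ does force corresponding entries into the same classes, after which your two-type prescription is consistent and is realized by the extension axioms. (Equivalently, you could add ``corresponding entries lie in the same $\xi$-class'' as a hypothesis of your amalgamation step, which is the strong-type formulation.) For comparison, the paper avoids the Kim--Pillay machinery entirely: it proves non-dividing directly by taking an $A_0$-indiscernible sequence $(\bar b_i)$ purportedly witnessing $r$-inconsistency of $\{\varphi(\bar x,\bar b_i)\}$, choosing witnesses $\bar c_i\models\varphi(\bar x,\bar b_i)$, pigeonholing them into the finitely many atomic-diagram-and-$\xi$-class patterns, and then using the extension axioms together with Corollary \ref{exttpcor} to produce a single $\bar c$ with $\mcM\models\varphi(\bar c,\bar b_{i_j})$ for $r$ indices, a contradiction. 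That argument is shorter because it never introduces an abstract independence relation; your route, once the amalgamation is restricted to models, has the advantage of exhibiting forking independence explicitly as disjointness over $C\cup acl(\emptyset)$.
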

\begin{proof}
It is clear from Fact \ref{omegathm} that $T$ is $\omega-$categorical. That $T$ is supersimple with $SU-$rank 1 follows from a standard argument which we will sketch here. We claim that if $\mcM\models T$ and $\bar a\in M, A\subseteq M$ with $A_0 = \bar a\cap acl(A) $ then $\bar a\ind_{A_0} A$ which in turn implies what we want to prove.\\\indent
Assume $\bar a\nind_{A_0} A$ and hence $tp(\bar a/A)\models\varphi(\bar x,\bar b)$ such that $\varphi(\bar x,\bar b)$ divides over $A_0$. Assume that $\bar b_1,\bar b_2, \ldots$ is an indiscernible sequence such that $tp(\bar b/ A_0)=tp(\bar b_1/A_0)=\ldots$ and $\{\varphi(\bar x,\bar b_i): i=1,\ldots\}$ is $r-$inconsistent for some $r\in \mbbN$. Let $\bar c_1,\bar c_2, \ldots$ be tuples such that $\mcM\models \varphi(\bar c_j,\bar b_j)$. Since this is an infinite sequence there has to exist $\bar c_{i_1},\ldots,\bar c_{i_r}$ with the same atomic diagram such that each component in one tuple is in the same $\xi-$equivalence class as the corresponding component in the other tuples. But then the $\xi-$extension axioms implies that there exists a tuple $\bar c$ such that $\bar b_{i_j}\bar c_{i_j}$ has the same atomic diagram and $\xi-$classes as $\bar b_{i_j}\bar c$. Using Corollary \ref{exttpcor} it follows that $tp(\bar b_{i_j}\bar c_{i_j}) = tp(\bar b_{i_j}\bar c)$ for each $j\in[r]$. Hence for each $j\in [r]$ $\mcM\models \varphi(\bar c,\bar b_{i_j})$, which means that we have a contradiction against the $r-$inconsistence.\\\indent
Lastly we show that $acl$ is trivial. If $a\in M$ is part of the base set of $\xi$, then clearly $a\in acl(\emptyset)$. Assume distinct $b,\bar a\in M$ are both disjoint from the base sets and $b\in acl(\bar a)$. The $\xi-$extension properties however imply that there exist an arbitrary amount of elements $b_1,b_2,\ldots , b_n$ such that $b_i\bar a$ have the same atomic diagram as $b\bar a$ and are in the same respective equivalence class. But then $tp(b\bar a) = tp(b_i\bar a)$ and hence $b\notin acl(\bar a)$.
\end{proof}

A special case of being simple is to be $\omega-$stable from which we may deduce the following corollary to Theorem \ref{simpthm}.
\begin{cor}\label{stablethm}
Assume that $T$ is a complete theory over a binary vocabulary. The following are equivalent:
\begin{itemize}
\item[(i)] $T$ is $\omega-$stable, $\omega-$categorical with SU-rank $1$ and trivial pregeometry.
\item[(ii)] there is a restricted equivalence relation $\xi$ for $T$ and a set of sets of binary atomic diagrams $\Delta=\{\delta_{i,j}\}$ with $|\delta_{i,j}|=1$ for each $i,j$ such that $T$ satisfy $(\xi,\Delta)-$extension properties.
\item[(iii)] there is a restricted equivalence relation $\xi$ for $T$ such that if $\mcM\models T$ then each equivalence classes $X$ of $\xi$ is indiscernible sets over $M-X$.
\end{itemize}
\end{cor}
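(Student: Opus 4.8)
The plan is to prove (i) $\Rightarrow$ (ii), (ii) $\Rightarrow$ (i) and (ii) $\Leftrightarrow$ (iii), reducing everything to a statement about the cardinalities $|\delta_{i,j}|$ via Theorem \ref{simpthm} and its proof. For (i) $\Rightarrow$ (ii): every $\omega$-stable theory is superstable and hence supersimple, so the hypotheses of (i) are exactly those of Theorem \ref{simpthm}(i). Taking $\mcM\models T$ and letting $\xi$ be the relation defined by $tp(x/acl_{\mcM^{eq}}(\emptyset)) = tp(y/acl_{\mcM^{eq}}(\emptyset))$, Lemma \ref{simpthmlma} gives that $\xi$ is restricted and $T$ satisfies $(\xi,\Delta)$-extension properties, where by Corollary \ref{simpcor} $\Delta=\{\delta_{i,j}\}$ consists of all atomic diagrams occurring between the classes. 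It remains to force $|\delta_{i,j}|=1$. First, if one of the classes, say $j$, is a base set, its unique element lies in $acl(\emptyset)$; since all elements of class $i$ share the same type over $acl_{\mcM^{eq}}(\emptyset)$ they have a common atomic diagram to that element, so $|\delta_{i,j}|=1$ automatically. Thus $|\delta_{i,j}|\geq 2$ would force $i,j$ both infinite. Fixing $\eta_1\neq\eta_2\in\delta_{i,j}$ and a countable set $\{a_n : n\in\mathbb{N}\}$ in the infinite class $j$, the $(\xi,\Delta)$-extension axioms yield, for every $S\subseteq\mathbb{N}$, an element $b_S$ of class $i$ with $atDiag(b_S,a_n)=\eta_1$ for $n\in S$ and $=\eta_2$ otherwise; consistency holds because each finite fragment is realized by an extension axiom. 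Distinct $S$ give distinct types, producing $2^{\aleph_0}$ types over a countable set, which contradicts $\omega$-stability; hence $|\delta_{i,j}|=1$, i.e. (ii).

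For (ii) $\Rightarrow$ (i), Lemma \ref{simplma} already yields that $T$ is $\omega$-categorical, supersimple with SU-rank $1$ and has trivial pregeometry, so only $\omega$-stability remains. Here $|\delta_{i,j}|=1$ makes the atomic diagram of any pair depend only on the two classes; consequently, for a finite tuple $\bar a$ and an element $b$, the type $tp(b\bar a)$ over $\emptyset$ is one of the finitely many complete types isolated by the extension axioms (Corollary \ref{exttpcor}), and that axiom is determined purely by the classes of the entries and the equalities among them. Hence, for any countable $A$, the type $tp(b/A)=\bigcup_{C\subseteq A \text{ finite}}tp(b/C)$ is determined by the class of $b$ and by which (at most one) element of $A$ equals $b$, so $|S_1(A)|\leq\aleph_0$; the same count in each arity gives $\omega$-stability and thus (i).

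For (ii) $\Leftrightarrow$ (iii): assuming (ii), the diagram of any pair is fixed by the two classes, so for a class $X$ and tuples $\bar b,\bar b'$ of distinct elements of $X$ of equal length the counting argument above gives $tp(\bar b/C)=tp(\bar b'/C)$ for every finite $C\subseteq M-X$ (the diagrams to $C$ agree and no equalities with $C$ occur), whence $X$ is an indiscernible set over $M-X$, which is (iii). Conversely, assuming (iii), indiscernibility forces $|\delta_{i,j}|=1$: for $i\neq j$, indiscernibility of class $i$ over its complement fixes the diagram as the class-$i$ element varies and, symmetrically, as the class-$j$ element varies; for $i=j$ the definition of an indiscernible set makes all pairs realize the same symmetric diagram. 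Defining $\Delta$ accordingly, I verify the $(\xi,\Delta)$-extension axioms $\sigma_k,\xi_{k,i},\tau_{k,i,j}$ in $\mcM$: a prescribed one-step extension is realized by a fresh element of the relevant class (an infinite class always supplies one, and its diagram to the existing tuple is the forced one), so by completeness of $T$ and Fact \ref{omegathm} we recover (ii).

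The main obstacle I anticipate is the precise link between $\omega$-stability and the uniqueness $|\delta_{i,j}|=1$. The delicate points are the separate handling of base sets, where uniqueness is forced by $acl(\emptyset)$ rather than by stability, so that any genuine failure of uniqueness occurs between two infinite classes, and the verification that the $2^{\aleph_0}$ coding types are consistent, which rests on the $(\xi,\Delta)$-extension axioms realizing every finite pattern of prescribed atomic diagrams. The converse counting direction is comparatively routine once one observes that $|\delta_{i,j}|=1$ collapses the quantifier-free type of a pair to its pair of classes.
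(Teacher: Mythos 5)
Your proposal is correct, but it is organized differently from the paper and, at one key point, uses a genuinely different argument. The paper proves the cycle (i)$\Rightarrow$(ii)$\Rightarrow$(iii)$\Rightarrow$(i): for (i)$\Rightarrow$(ii) it invokes Lemma \ref{simpthmlma} and then gets $|\delta_{i,j}|=1$ from stationarity, i.e.\ the fact that in an $\omega-$stable theory of SU-rank $1$ the type $tp(a/acl_{\mcM^{eq}}(\emptyset))$ has a \emph{unique} extension to a type over any $A$ not containing $a$, so two elements of the same class have identical atomic diagrams over everything; for (iii)$\Rightarrow$(i) it gets $\omega-$stability, SU-rank $1$, triviality of $acl$ and $\omega-$categoricity all at once by observing that under (iii) the type of a tuple over $A$ depends only on the $\xi-$classes of its entries, so there are at most countably many types over a countable set. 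You instead prove (i)$\Leftrightarrow$(ii) and (ii)$\Leftrightarrow$(iii) separately. Your replacement for the stationarity step is a coding argument: if $\eta_1\neq\eta_2\in\delta_{i,j}$ with both classes infinite, the $(\xi,\Delta)-$extension axioms make every finite pattern of $\eta_1/\eta_2$ diagrams consistent, yielding $2^{\aleph_0}$ types over a countable set and contradicting $\omega-$stability; this is more elementary and self-contained (it never appeals to unique non-forking extensions), and your explicit treatment of base-set classes via $acl(\emptyset)$ is cleaner than the paper's, which glosses over that case. The price of your decomposition is that you must do two pieces of work the paper's cycle avoids: the type-counting for $\omega-$stability has to be run directly from (ii) (essentially duplicating the paper's (iii)$\Rightarrow$(i) count), and (iii)$\Rightarrow$(ii) requires verifying the formal extension-axiom clauses $\sigma_k$, $\xi_{k,i}$, $\tau_{k,i,j}$, which is the most formalism-heavy part; there you should, as the paper does in Lemma \ref{simpthmlma}, adopt the convention that extension axioms only speak about elements outside the base sets (base-set elements being handled by the spanning formula $\gamma$), since otherwise a pattern asking for a second, distinct element of a singleton class would make $\tau_{k,i,j}$ fail vacuously. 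With that convention in place, all of your steps go through at the same level of rigor as the paper's own proof.
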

\begin{proof}
Assume (i) and let $\mcM\models T$. Lemma \ref{simpthmlma} implies that if $\xi (x,y)$ is the equivalence relation $tp(x/acl_{\mcM^{eq}}(\emptyset)) = tp(y/acl_{\mcM^{eq}}(\emptyset))$ then $T$ satisfies $(\xi,\Delta)-$extension axioms for some set $\Delta$. If $A\subseteq M$ and $a,b\in M-A$ such that $\mcM\models\xi(a,b)$ then $p(x)=tp(a/acl_{\mcM^{eq}}(\emptyset))=tp(b/acl_{\mcM^{eq}}(\emptyset))$ and by stability and $SU-$rank 1, there is thus a unique way to extend $p(x)$ to a type over $A$ hence $tp(a/A)=tp(b/A)$. This implies that the atomic diagram of $\{a\}\cup A$ is the same as for $\{b\}\cup A$ for any $A\subseteq M$. We may thus conclude that $|\delta_{i,j}|= 1$ for each $\delta_{i,j}\in \Delta$.

Assume (ii) in order to prove (iii). Let $\mcM\models T$, assume $a_1,\ldots,a_k\in M$ are in the same $\xi-$equivalence class and assume $b_1,\ldots,b_r\in M$ are not in the same class as $a_1$. If $c_1,\ldots,c_k$ are in the same class as $a_1$ then, by the assumptions, they satisfy the same extension axioms, i.e. $\mcM\models \theta_{k,i}(a_1,\ldots,a_k) \wedge \theta_{k,i}(c_1,\ldots,c_k)$ for some $i$. However there is a unique way to extend $c_1,\ldots,c_k$ to any element in the same equivalence class as $b_1$. Thus by induction there is $j$ such that $\mcM\models\theta_{k,j}(a_1,\ldots,a_k,b_1,\ldots,b_r) \wedge \theta_{k,j}(c_1,\ldots,c_k,b_1,\ldots,b_r)$ and hence $tp(a_1,\ldots,a_k/b_1,\ldots,b_r) = tp(c_1,\ldots,c_k/b_1,\ldots,b_r)$. 

If we assume (iii) and want to prove (i), assume $\mcM\models T$. For any $A\subseteq M$ it is clear from the assumption that the algebraic closure is trivial and for any tuple $\bar a\in M$ such that $\bar a\cap A =\emptyset$ the type $tp(\bar a / A)$ only depend on which $\xi-$equivalence class the elements of $\bar a$ are in. Thus we conclude that the SU-rank is $1$ and if $|A|=\aleph_0$ there are only $\aleph_0$ complete types over $A$, hence we have $\omega-$stability. $T$ is $\omega-$categorical since the type of a tuple only depend on which equivalence classes it belongs, and thus there are only a finite amount of $n-$types over $\emptyset$ for each $n<\omega$.
\end{proof}
As a special case of the $\omega-$stable theories we have the strongly minimal ones. 
\begin{cor}\label{smcor}
Assume that $T$ is a complete theory over a binary vocabulary. The following are equivalent.
\begin{itemize}
\item[(i)] $T$ is strongly minimal and $\omega-$categorical with trivial algebraic closure.
\item[(ii)] There is a restricted equivalence relation $\xi$ for $T$ with only one infinite equivalence class in $\mcM\models T$ such that $T$ satisfies $\xi-$extension properties and all pairs of elements which are not from the base sets have the same atomic diagram.
\item[(iii)] If $\mcM\models T$, there exists a cofinite $\emptyset-$definable set which is indiscernible over the rest of $\mcM$.
\end{itemize}
\end{cor}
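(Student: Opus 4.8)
The plan is to prove this as a specialization of Corollary \ref{stablethm}, exploiting that strong minimality is exactly $\omega$-stability together with a single infinite ``generic'' class. I would prove the cycle (i) $\Rightarrow$ (ii) $\Rightarrow$ (iii) $\Rightarrow$ (i), reusing the stable machinery already established. First I would observe that strong minimality implies $\omega$-stability and $SU$-rank $1$, so Corollary \ref{stablethm} applies and furnishes a restricted equivalence relation $\xi$ (namely $tp(x/acl_{\meq}(\emptyset))=tp(y/acl_{\meq}(\emptyset))$) for which $T$ satisfies $(\xi,\Delta)$-extension properties with every $\delta_{i,j}$ a singleton. The only extra content over the $\omega$-stable case is the claim that $\xi$ has \emph{exactly one} infinite class and that all non-base pairs share one atomic diagram.

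For (i) $\Rightarrow$ (ii), the key step is to rule out two distinct infinite $\xi$-classes. Suppose $a,b\in M$ lie in different infinite classes. Since the base sets are exactly $acl(\emptyset)$ (as noted after Lemma \ref{simplma}) and $acl$ is trivial, the set defined by $x\in acl(\emptyset)$ is finite and hence the formula $\xi(x,a)$ (equivalently, membership in $a$'s class) would be a definable set which is coinfinite --- because $b$'s entire infinite class lies outside it --- while also being infinite. This directly contradicts strong minimality, which forces every definable unary set to be finite or cofinite. Thus there is a single infinite class, the other classes being the size-one base sets. That every pair of non-base elements has the same atomic diagram then follows: such elements all lie in the one infinite class, and by the $\omega$-stable case $|\delta_{i,j}|=1$, so in particular $|\delta_{I,I}|=1$ for the infinite class $I$, pinning down the atomic diagram of any two distinct non-base elements.

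For (ii) $\Rightarrow$ (iii), I would take the union $X$ of the single infinite class together with finitely many base elements removed, so that $X$ is cofinite and $\emptyset$-definable (its complement is $acl(\emptyset)$, which is finite and $\emptyset$-definable by $\omega$-categoricity). That $X$ is indiscernible over $M-X$ is then exactly the conclusion of Corollary \ref{stablethm}(iii) applied to the unique infinite class, using that all atomic diagrams inside $X$ and from $X$ to $M-X$ are determined by $\xi$-class data with singleton $\Delta$. For (iii) $\Rightarrow$ (i), given the cofinite indiscernible $\emptyset$-definable set $Y$, any definable unary set $\varphi(\mcM,\bar c)$ meets $Y$ in either all or none of $Y$ by indiscernibility over the parameters $\bar c$ (after moving finitely many parameters out of $Y$), hence $\varphi(\mcM,\bar c)$ is finite or cofinite, giving strong minimality; $\omega$-categoricity and triviality of $acl$ transfer from Corollary \ref{stablethm}(iii) $\Rightarrow$ (i) verbatim.

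The main obstacle I anticipate is the careful bookkeeping in (iii) $\Rightarrow$ (i): one must handle parameters $\bar c$ that themselves land inside the cofinite indiscernible set $Y$, so that indiscernibility is only available over $Y\setminus\bar c$; the argument needs that removing finitely many points from an indiscernible set leaves the remainder indiscernible over the enlarged parameter set, and that this suffices to classify $\varphi(\mcM,\bar c)\cap Y$ as all-but-finitely-many or finitely-many elements of $Y$. Everything else is a direct reduction to the already-proven $\omega$-stable corollary.
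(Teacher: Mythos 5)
Your proposal is correct and takes essentially the same route as the paper: both reduce to Corollary \ref{stablethm}, use strong minimality to force a single infinite $\xi$-class for (i)$\Rightarrow$(ii), invoke the stable corollary's indiscernibility for (ii)$\Rightarrow$(iii), and argue (iii)$\Rightarrow$(i) by noting that a unary formula must hold on all or none of the cofinite indiscernible set once the finitely many parameters inside it are set aside. Your version actually spells out two points the paper leaves implicit --- why two infinite classes would violate strong minimality, and the bookkeeping for parameters landing inside the indiscernible set --- but these are elaborations, not a different argument.
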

\begin{proof} Assume that $\mcM\models T$ is strongly minimal and $\omega-$categorical, thus $\mcM$ is $\omega-$stable. Corollary \ref{stablethm} then implies that there is a restricted equivalence relation $\xi$ for which we satisfy $\xi-$extension properties. The strong minimality however implies that there is only one infinite equivalence class hence (ii) follows.\\\indent 
Assume (ii). The infinite equivalence class of $\xi$ is an $\emptyset-$definable set. By Corollary \ref{stablethm} this set is indiscernible over the rest of $\mcM$. If we assume (iii) it is clear that only a finite amount of $n-$types may exist over $\emptyset$ for each $n<\omega$, thus $T$ is $\omega-$categorical. By indiscernability either $\varphi(x,\bar a)$ is satisfied by all elements in the cofinite set (and not in $\bar a$) or none, thus $\varphi(x,\bar a)$ is defining a finite or cofinite set. Hence $T$ is strongly minimal.
\end{proof}

\begin{rmk}\label{highervocrmk}
It is quite clear that the definition of spanning formulas \ref{spanningdef} and $\xi-$extension properties \ref{xiextax} may be extended into the context of any finite relational vocabulary $V$. With these more general assumptions Corollaries \ref{stablethm} and \ref{smcor} have proofs which are very similar, though more technical, with the main component being the fact that $\omega-$stable theories have stationary types over algebraically closed sets. Theorem \ref{simpthm} however is not possible to generalize using our method as the independence property of simple theories is not strong enough to handle the higher arity relational symbols in a good enough way. 
\end{rmk}
The pregeometry defined by the algebraic closure in a strongly minimal $\omega-$categorical theory satisfies that if $X$ and $Y$ both are independent sets of equal size then $|cl(X)|=|cl(Y)|$. It thus follows, using Lemma \ref{cllemma}, that if $(\mbK_n,\mu_n)_{n\in\mbbN}$ is a class of structures such that almost surely $|N|= n$ for $\mcN\in\mbK_n$ and the almost sure theory $T_\mbK$ is strongly minimal and $\omega-$categorical then the algebraic closure is trivial. This conclusion combined with the previous remark gives us the following result. 
\begin{prp}\label{smlma}
Let $V$ be any finite relational vocabulary. Assume a set of $V-$structures $\mbK = (\mbK_n,\mu_n)_{n\in\mbbN}$ are such that $|N| = n$ almost surely for $\mcN\in \mbK_n$ then the following are equivalent:
\begin{itemize}
\item $T_{\mbK} \text{ is strongly minimal and $\omega-$categorical}$.
\item $\mbK$ has a $0-1$ law and there exists a number $m\in \mbbN$ such that almost surely for $\mcN\in\mbK$ there is $X\subseteq N$ with $|X|=m$ such that $N-X$ is indiscernible over $X$.
\end{itemize}
\end{prp}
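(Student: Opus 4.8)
The plan is to prove the two implications by passing between a countable $\mcM\models T_\mbK$ and the finite structures in $\mbK$, using the remark from the introduction that $\mbK$ has a $0-1$ law exactly when $T_\mbK$ is complete. Note first that the standing hypothesis $|N|=n$ almost surely makes the sizes grow, so each sentence $\exists^{\geq k}x(x=x)$ is almost sure and $T_\mbK$ has no finite models. Now assume $T_\mbK$ is strongly minimal and $\omega$-categorical. Since $\omega$-categoricity is categoricity in $\aleph_0$, the fact from the introduction (categorical in an infinite cardinal, no finite models $\Rightarrow$ complete) shows $T_\mbK$ is complete, so $\mbK$ has a $0-1$ law. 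As indicated just before the proposition, the hypothesis $|N|=n$ combined with Lemma \ref{cllemma}, applied to the $\emptyset$-definable pregeometry $(M,acl)$ (Fact \ref{simppregfact} and the remark following it), forces each increment $|cl(a_1,\dots,a_k)|-|cl(a_1,\dots,a_{k-1})|$ to divide $n-|cl(a_1,\dots,a_{k-1})|$ for all large $n$, hence to equal $1$; so the pregeometry is trivial. Corollary \ref{smcor}, in the form valid for an arbitrary finite relational vocabulary by Remark \ref{highervocrmk}, then yields a cofinite $\emptyset$-definable set $I_0=\chi(\mcM)$ that is indiscernible over its finite complement $X_0=M\setminus I_0$; set $m=|X_0|$.

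The remaining work in this direction is to transfer indiscernibility to the finite structures, and here the finiteness of the vocabulary is essential. Let $\rho$ be the maximal arity. Then the statement ``the complement of $\chi$ is indiscernible over $\chi$'' is captured, at the atomic level, by a single first-order sentence $\Psi$: for every relation symbol $R$, every choice of which argument places fall under $\chi$ and which do not, and every equality pattern on the remaining places, $\Psi$ asserts that the truth value of $R$ is unchanged when the non-$\chi$ entries are replaced by any other distinct elements outside $\chi$. Since $I_0$ is indiscernible over $X_0$ in $\mcM$, both $\exists^{=m}x\,\chi(x)$ and $\Psi$ belong to the complete theory $T_\mbK$ and are therefore almost surely true; taking $X=\chi(\mcN)$ gives the required size-$m$ set with $N-X$ indiscernible over $X$.

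For the converse, the $0-1$ law makes $T_\mbK$ complete; pick $\mcM\models T_\mbK$, which is infinite by growth of sizes. The almost sure existence of $X$ with $|X|=m$ and $N-X$ indiscernible over $X$ is again a first-order sentence $\exists x_1\cdots x_m\,\Psi'$, with $\Psi'$ the bounded-arity condition above, so it lies in $T_\mbK$ and holds in $\mcM$: there are $X_0\subseteq M$ with $|X_0|=m$ and $I=M\setminus X_0$ infinite and atomically indiscernible over $X_0$. The point is that indiscernibility of the infinite set $I$ over $X_0$ is precisely the extension property needed for a back-and-forth fixing $X_0$ pointwise: if $\bar a,\bar b$ are equal-length tuples from $I$ with the same atomic type over $X_0$ and $c\in I$, then any $c'\in I\setminus\bar b$ gives $\bar b c'$ the same atomic type over $X_0$ as $\bar a c$. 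Hence the atomic type over $X_0$ determines the full type over $X_0$, so there are only finitely many complete $n$-types over $X_0$ for each $n$, and therefore only finitely many over $\emptyset$; by the Ryll--Nardzewski theorem $T_\mbK$ is $\omega$-categorical. Strong minimality then follows at once: for $\varphi(x,\bar a)$ with $\bar a=\bar a_0\bar a_1$, $\bar a_0\subseteq X_0$ and $\bar a_1\subseteq I$, indiscernibility of $I$ over $X_0$ makes the atomic type of $x\bar a_1$ over $X_0$, and hence the truth of $\varphi(x,\bar a)$, constant as $x$ ranges over $I\setminus\bar a_1$, so $\varphi(\mcM,\bar a)$ is finite or cofinite. (Alternatively one may feed the $\emptyset$-definable cofinite set into Corollary \ref{smcor}(iii).)

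I expect the main obstacle to be exactly the transfer of indiscernibility in both directions, and specifically the claim that the relevant indiscernibility is first-order expressible. This rests entirely on the finiteness of the vocabulary, which reduces an a priori infinite scheme of conditions to the single sentence $\Psi$ controlling only atomic diagrams of tuples of length at most $\rho$. The second delicate point, handled by the back-and-forth above, is that the atomic indiscernibility available in the finite structures upgrades, in the limit model $\mcM$, to genuine model-theoretic indiscernibility, which is what strong minimality and $\omega$-categoricity require; the infiniteness of $I$ (guaranteed by the growth of sizes) is what makes this step go through.
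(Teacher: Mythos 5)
Your proof is correct, and its forward direction follows the same skeleton as the paper's (very terse) argument: the paper proves this proposition only by the paragraph preceding it, namely that Lemma \ref{cllemma} together with $|N|=n$ forces the algebraic closure of a strongly minimal, $\omega$-categorical $T_\mbK$ to be trivial, after which Remark \ref{highervocrmk} (the generalization of Corollary \ref{smcor} to arbitrary finite relational vocabularies) is invoked, with all transfer details left implicit. What you make explicit --- and this is where the real content lies --- is (1) that indiscernibility of the complement of a size-$m$ set is captured by a single first-order sentence because the arity is bounded by $\rho$, so the property moves between $T_\mbK$ and the finite structures in both directions; and (2) in the converse direction, a back-and-forth argument upgrading the atomic indiscernibility obtained in a model of $T_\mbK$ to full indiscernibility, giving $\omega$-categoricity via Ryll--Nardzewski and strong minimality directly. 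Point (2) is in fact needed rather than optional: condition (b) gives no definability of the set $X$, whereas Corollary \ref{smcor}(iii) demands an $\emptyset$-definable cofinite set, so one cannot simply quote \ref{smcor}(iii) in a model of $T_\mbK$; your back-and-forth replaces exactly this step. For the same reason your closing parenthetical (``alternatively one may feed the $\emptyset$-definable cofinite set into Corollary \ref{smcor}(iii)'') is the one remark that is not justified as written, since no $\emptyset$-definable such set has been produced at that point --- fortunately it is redundant. Finally, one notational slip should be repaired: you introduce $\chi$ as defining the cofinite set $I_0$, but from ``the complement of $\chi$ is indiscernible over $\chi$'' onward (in the description of $\Psi$, in $\exists^{=m}x\,\chi(x)$, and in $X=\chi(\mcN)$) you use $\chi$ as if it defined the finite complement $X_0$; replacing $\chi$ by $\neg\chi$ in those occurrences makes the text consistent, and the mathematics is unaffected.
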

The assumption that $\mbK$ must have a $0-1$ law is necessary for the second direction, as we may almost surely have indiscernible sets even though the same things are not almost surely true. An easy example is letting $\mbK_n$ consist of a complete graph on $n$ nodes when $n$ is even and the complement of the complete graph on $n$ nodes when $n$ is odd.

\section{Approximating theories using probabilities on finite structures}\label{classsection}
In this section we prove Theorems \ref{rngstrthm} and \ref{reductprp} which say that the simple $\omega-$categorical structures with SU-rank 1 and trivial pregeometry are possible to approximate using finite structures and almost sure theories. In order to do this, we will need to define very strict ways to uniformly create the finite structures and in this way satisfy the correct extension properties. We thus define a set of atomic diagrams such that it could have been gotten from an infinite structure with a restricted equivalence relation $Q$ as we saw in Section \ref{smsection}.
\begin{defi}
Assume that $l,t\in\mbbN$, $t<l$, $Q$ is a relational symbol in the vocabulary and $\Delta = \{\delta_{i,j}\}_{i,j\in[l]}$ is such that each $\delta_{i,j}$ is a set of binary atomic diagrams. We call $\Delta$ an \textbf{$(l,t, Q)-$compatible set} if the following properties are satisfied:
\begin{itemize}
\item For any $i,j\in [l]$ if $\zeta(x,y) \in\delta_{i,j}$ then $\zeta(y,x)\in \delta_{j,i}$.
\item For any $i,j,k\in [l]$ if $\zeta(x,y)\in\delta_{i,j}$ and $\zeta'(x',y')\in \delta_{i,k}$ then $\zeta(x,y)$ specifies $x$ to have the same unary atomic diagram as $x'$ in $\zeta'(x',y')$.
\item For any $i\in[l]$ if $j\in \{l-t+1,\ldots,l\}$ then $|\delta_{i,j}| = 1$.
\item $Q(x,y)\wedge Q(y,x)\wedge Q(x,x)$ hold in all atomic diagrams in $\delta_{i,i}$ and $\neg Q(x,y)\wedge \neg  Q(y,x) \wedge Q(x,x) $ hold in all atomic diagrams in $\delta_{i,j}$ if $i\neq j$.
\end{itemize}
\end{defi}
Using the $(l,t,Q)-$compatible sets we will now show a $0-1$ law which will be the foundation for the rest of this section.
The next proposition may seem easy to generalize to structures with more complex vocabulary than binary, however problems may arise with dependence between a lower arity relational symbol and a higher one, which seem to make things quite complicated. This may though be possible to fix by giving an even more elaborate definition than the one above.
If we assume that there is a unique atomic diagram between fixed classes, then a generalization of the proposition to higher arities becomes a quite trivial exercise.
\begin{prp}\label{lawprop}
Let $l,t\in\mbbZ^+$, $Q\in V$ and assume that $\Delta=\{\delta_{i,j}\}_{i,j\in[l]}$ is an $(l,t,Q)-$compatible set. If 
\[\mbK_n = \{\mcN : N = ([n]\times[l-t])\ \cup \ (\{1\}\times \{l-t+1,\ldots ,l\}) \text{ and if } (a,i),(b,j)\in N \]\[\text{ then } atDiag^\mcN((a,i),(b,j))\in \delta_{i,j}  \}\]
with associated uniform measure $\mu_n(\mcN) = 1/|\mbK_n|$ then $\mbK = (\mbK_n,\mu_n)_{n\in\mbbN}$ almost surely satisfies $(\Delta,Q)-$extension properties and has a $0-1$ law with an almost sure theory which is supersimple and $\omega-$categorical with SU-rank 1 and trivial pregeometry.
\end{prp}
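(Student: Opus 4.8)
The plan is to read $Q$ as the restricted equivalence relation $\xi$ and then verify that every $(\xi,\Delta)$-extension axiom of Definition \ref{xiextax} is almost surely true, after which Corollary \ref{01cor} and Lemma \ref{simplma} deliver all the stated conclusions. First I would pin down the probability space. By compatibility conditions~1 and~2, the uniform measure $\mu_n$ on $\mbK_n$ coincides with the product measure obtained by choosing, independently and uniformly for each unordered pair of distinct elements $u,v$ lying in classes $i,j$, a binary atomic diagram from $\delta_{i,j}$; the reflexive and unary data are forced by the class, so distinct pairs contribute independent coordinates with no consistency conflict. Condition~3 forces every relation incident to one of the $t$ base elements $\{1\}\times\{l-t+1,\ldots,l\}$, so these add no randomness, and condition~4 makes $Q$ interpret as the equivalence relation whose classes are exactly the $l$ index-classes, namely $l-t$ classes of size $n$ and $t$ classes of size $1$. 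Thus in the intended limit $\xi = Q$ is restricted with $t$ base sets, and it suffices to control the extension axioms built from this $\xi$ and from $\Delta$.

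Next I would dispatch the spanning requirement, i.e. the formula $\gamma$ packaged inside each $\theta_{k,i}$, which asserts that for all $i,j$ every diagram in $\delta_{i,j}$ is realized by some ordered pair. For a fixed $\delta_{i,j}$ and target diagram $\zeta$ between two \emph{large} classes, using $\lfloor n/2\rfloor$ disjoint pairs shows the probability that $\zeta$ is unrealized is at most $(1-c)^{\lfloor n/2\rfloor}$ for a constant $c>0$, which tends to $0$; diagrams involving a base class are single (condition~3) and realized automatically. A union bound over the finitely many triples $(i,j,\zeta)$ gives $\mu_n(\gamma)\to 1$. Once $\gamma$ holds, the sentences $\sigma_k$, the disjointness clauses, property~(\ref{thetastrong}), and $\xi_{k,i}$ hold in \emph{every} member of $\mbK_n$, since the $(\xi,\Delta)$-extension axioms are designed to enumerate exactly the class/diagram configurations permitted by $\Delta$, and by construction every pair in a structure of $\mbK_n$ has its diagram in the appropriate $\delta_{i,j}$; in particular any new element lands in some class and relates to the existing tuple by an allowed diagram.

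The heart of the argument is that each $\tau_{k,i,j}$ is almost surely true. Such an axiom asserts that every tuple $\bar y = y_1,\ldots,y_k$ realizing $\theta_{k,i}$ admits an element $z$ lying in a prescribed class and exhibiting a prescribed diagram to each $y_s$, these diagrams being members of the relevant $\delta_{c,c_s}$ because $\theta_{k+1,j}$ is a $(\xi,\Delta)$-extension axiom. If the prescribed class is a base set, the witness is the unique forced base element and $\tau$ holds deterministically. If it is one of the $l-t$ large classes, then for a fixed $\bar y$ and each candidate $z$ in that class disjoint from $\bar y$, the probability that $z$ realizes all $k$ required diagrams is a fixed positive constant $p$ (a product of reciprocals $1/|\delta_{c,c_s}|$ over the $k$ relevant class-pairs), independent of $n$; moreover the diagrams from distinct candidates $z$ to $\bar y$ involve pairwise disjoint pairs and are therefore independent. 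Hence the probability that no candidate works is at most $(1-p)^{\,n-k}$, and a union bound over the at most $n^k$ choices of $\bar y$ (times the finitely many $\theta_{k,i}$ at level $k$) gives failure probability at most $n^k(1-p)^{\,n-k}\to 0$, so $\mu_n(\tau_{k,i,j})\to 1$.

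Having shown that every extension axiom is almost surely true, each lies in $T_\mbK$, so $T_\mbK$ satisfies the $(\xi,\Delta)$-extension properties in the sense of Definitions \ref{thetadef} and \ref{xiextax}; equivalently $\mbK$ almost surely satisfies extension properties. Corollary \ref{01cor} then yields both the $0$--$1$ law and the $\omega$-categoricity of $T_\mbK$, and Lemma \ref{simplma} upgrades this to supersimplicity, SU-rank $1$, and trivial pregeometry. I expect the main obstacle to be organizational rather than conceptual: verifying cleanly that the uniform measure factorizes as independent per-pair choices — so that the first-moment estimates are legitimate — while carefully segregating the deterministic base-set relations from the genuinely random large-class relations, and confirming that the diagrams demanded by $\tau_{k,i,j}$ always lie in the relevant $\delta_{i,j}$ (so that $p>0$), which is precisely what the compatibility conditions on $\Delta$ guarantee.
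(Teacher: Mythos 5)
Your proposal is correct and follows essentially the same route as the paper's proof: verify the $(\Delta,Q)$-extension axioms almost surely (spanning formula by a first-moment estimate, the sentences $\sigma_k$ and $\xi_{k,i}$ deterministically once $\gamma$ holds, and $\tau_{k,i,j}$ by independence across candidate witnesses plus a union bound over tuples), then invoke Lemma \ref{simplma} together with the extension-property characterization to obtain the $0$--$1$ law, $\omega$-categoricity, supersimplicity, SU-rank $1$ and triviality of the pregeometry. The only difference is cosmetic: you make explicit the factorization of the uniform measure into independent per-pair choices (which the compatibility conditions indeed guarantee), a point the paper uses implicitly in its probability estimates.
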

\begin{proof} Note that $Q(x,y)$ form a restricted equivalence relation in $\mbK$, thus it follows from Lemma \ref{simplma} that if we can prove that $\mbK$ almost surely satisfies $(\Delta,Q)-$extension properties then $\mbK$ has a $0-1$ law and $T_\mbK$ satisfy all required properties. \\\indent 
If $i,j\in [l-t]$ and $\zeta\in \delta_{i,j}$ then 
\[\mu_n(\exists x,y (atDiag(x,y)=\zeta)) \approx 1 - \frac{(|\delta_{i,j}|-1)^{n^2}}{|\delta_{i,j}|^{n^2}}\] 
which tends to $1$ as $n\rightarrow \infty$. On the other hand if $i\in \{l-t+1,\ldots,l\}$ then $|\delta_{i,j}| = 1$. Thus we conclude that each atomic diagram in any $\delta_{i,j}$ has an asymptotic probability of $1$ to exist, and hence there exists a formula $\gamma$ which is spanning $Q$. 
\\\indent
Using $\Delta$ and $Q$ we may create $(\Delta,Q)-$extension axioms according to Definition \ref{xiextax} and hence we now need to prove that the properties in Definition \ref{thetadef} all almost surely hold in order to finish this proof. It is clear that the extension axioms satisfy property (\ref{thetastrong}). It remains to prove that the formulas $\sigma_k, \tau_{k,i,j}$ and $\xi_{k,i}$ hold.
It is clear, by the way the structures in $\mbK_n$ are defined, that almost surely for $\mcN\in \mbK_n$ we have $\mcN\models\sigma_k \wedge \xi_{k,i}$ for each $k\in\mbbN$ and $i\in [i_k]$.
\\\indent
If $\mcN\models \theta_{k,i}(a_1,\ldots,a_k)$ and $j$ is one of its associated numbers, $\theta_{k+1,j}$ have an equivalence class, with number $p$, (with respect to $\gamma$) pointed out for the extra element. If $p\in\{l-t+1,\ldots,l\}$ then, with probability one, $\mcN\models \tau_{k,i,j}$ since there is only one way for elements in specific equivalence classes to be adjacent to elements in the equivalence class $p$. Assume $p\in[l-t]$. The probability that no element with this atomic diagram exists for some elements satisfying $\theta_{k,i}$ is at most
\[\binom{n\cdot l}{k}\left(\frac{c-1}{c}\right)^{\frac{n}{l} - k -s}\]
where $c$ is the number of possible isomorphism classes of k+1 elements in the chosen equivalence classes and $s$ is the number of elements which $\gamma$ talk about. We note that this probability goes to $0$ as $n$ grows, and thus $\tau_{k,i,j}$ is almost surely true.
\end{proof}
We will now move on to proving Theorem \ref{rngstrthm}. It might seem like we, in the proof, are taking a huge detour to a new structure $\mcM'$. However the problem in studying $\mcM$ is that we do not know, without our detour to $\mcM'$, if the equivalence relation $\xi$ in $\mcM$ does almost surely define an equivalence relation with the desired properties in $\mbK$.
\begin{proof}[Proof of Theorem \ref{rngstrthm}]
Let $\mcM\models T$ and $\xi(x,y)$ be a formula representing the equivalence relation $tp_{\mcM^{eq}}(x/acl_{\mcM^{eq}}(\emptyset)) = tp_{\mcM^{eq}}(y/acl_{\mcM^{eq}}(\emptyset))$ which we know is $\emptyset-$definable and have $l$ equivalence classes out of which $t$ are finite. Let $V'= V\cup \{Q\}$ where $Q$ is a new binary relation and create the $V'$-structure $\mcM'$ such that $\mcM'\reduct V=\mcM$ and $\mcM\models \xi(a,b)$ if and only if $\mcM'\models Q(a,b)$. Obviously $Q$ is an equivalence relation in $\mcM'$ and since $Q$ only represents an $\emptyset-$definable relation in $\mcM$ it is clear that $tp_{\mcM'^{eq}}(x/acl_{\mcM'^{eq}}(\emptyset)) = tp_{\mcM'^{eq}}(y/acl_{\mcM'^{eq}}(\emptyset))$ if and only if $\mcM'\models Q(x,y)$. \\\indent
Let $\Delta'=\{\delta'_{i,j}\}_{i,j\in[l]}$ be such that for each $i,j\in [l]$, $\delta'_{i,j}$ is the set of all binary atomic diagrams existing between elements in $Q-$equivalence class $i$ and $j$ in $\mcM'$. Note that if $a,b,c \in M'$ with $a\in acl_{\mcM'}(\emptyset)$ and $\mcM'\models Q(b,c)$ then $tp_{\mcM'}(b/a)= tp_{\mcM'}(c/a)$ and thus if the equivalence class of $a$ and $b$ is $i$ and $j$ respectively then $|\delta'_{i,j}|= 1$. The remaining properties of $\Delta$ are clear and we may thus conclude that $\Delta$ is an $(l,t,Q)-$compatible set.
Define
\[\mbK'_n = \{\mcN : N = ([n]\times[l-t])\ \cup\ (\{1\}\times \{l-t+1,\ldots, l\}) \text{ and if } (a,i),(b,j)\in N \]\[\text{ then } atDiag((a,i),(b,j))\in \delta_{i,j}  \}\]
and associate the uniform probability measure $\mu'_n(\mcM)= 1/|\mbK'_n|$ with it.
From Proposition \ref{lawprop} we get that $\mbK'=(\mbK_n', \mu_n')_{n\in\mbbN}$ has a $0-1$ law and $T_{\mbK'}$ satisfy $(\Delta', Q)-$extension properties. By Corollary \ref{simpcor}, $Th(\mcM')$ is axiomatized by $(\Delta', Q)-$extension properties and thus $Th(\mcM') = T_{\mbK'}$. By definition $\mcM'\models \forall x,y (\xi(x,y)\leftrightarrow Q(x,y))$ thus $\xi$ is almost surely a restricted equivalence relation in $\mbK'$ and $\mbK'$ almost surely satisfy $(\Delta',\xi)-$extension properties. \\\indent
Let $\Delta = \Delta'\reduct V$, $\mbK_n = \{\mcN'\reduct V  : \mcN'\in \mbK_n'\}$ with probability measure $\mu_n (\mcN\reduct V)= \mu_n'(\mcN)$ and put $\mbK=(\mbK_n,\mu_n)_{n\in\mbbN}$. Clearly $\mu_n$ is a well defined probability measure due to $Q$ implicitly being defined by the labeling of each structure. Since $\xi$ is a $V-$formula, it is almost surely true in $\mbK$ that $\xi$ is a restricted equivalence relation and the $(\Delta,\xi)-$extension properties are almost surely satisfied. By Corollary \ref{simpcor}, $Th(\mcM)$ is axiomatized by the $(\Delta,\xi)-$extension properties and thus $T_\mbK = Th(\mcM)$.
\end{proof}
In previous works on the subject of finding $0-1$ laws the previous theorem and Proposition \ref{lawprop} are not standard since the sets considered are not (almost surely) the set of substructures of a model of the almost sure theory under the uniform measure. The question thus arise if we may create the same results in that context, why we now turn to studying what we call random structures. 
\begin{defi}\label{rngstrdef}
Let $\mcM$ be a structure and $\mbK_n = \{\mcA: A = [n] , \exists f: \mcA \rightarrow \mcM \text{ embedding}\}$ with a probability measure $\mu_n$ such that for $\mcN\in\mbK_n$, $\mu_n(\mcN)= 1/|\mbK_n|$. We say that $\mcM$ is a \textbf{random structure} if $\mbK=(\mbK_n,\mu_n)_{n\in\mbbN}$ has a $0-1$ law and $\mcM\models T_\mbK.$
\end{defi}
Lemma \ref{cllemma} and Corollary \ref{vectcor} imply that an $\omega-$categorical structure $\mcM$, where $(\mcM,acl_\mcM)$ forms a vector space pregeometry, is not a random structure. In our previous examples, by definition, the infinite structures in \ref{omegaex} and \ref{partexa} are both random structures. However Example \ref{nonrigidexa} does not give a random structure, which follows from the previous proposition as it is not $\omega-$stable. Theorem \ref{reductprp} does however show that all the structures considered in this article are at least close to being random structures, which we will now prove.
\begin{proof}[Proof of Theorem \ref{reductprp}]
Assume that $tp(x/acl_{\mcM^{eq}}(\emptyset)) = tp(y/acl_{\mcM^{eq}}(\emptyset))$ has exactly $l$ equivalence classes. Enumerate the classes and let $\Delta=\{\delta_{i,j}\}_{i,j\in[l]}$ be such that $\delta_{i,j}$ is the set of all possible binary atomic diagrams between class $i$ and class $j$. Put $r = \max_{i,j} |\delta_{i,j}|+1$ and let $V'=V\cup\{Q\} \cup \{R^1_{i,j},\ldots,R^{r-|\delta_{i,j}|}_{i,j}\}_{i\leq j}$ where $Q$ and $R_{i,j}^t$ are binary relational symbols not in $V$. For each $i\leq j$ and for some $P(x,y)\in \delta_{i,j}$ we will define new distinct atomic diagrams $P_{0},\ldots,P_{r-|\delta_{i,j}|}$ inductively. Let $P_0$ be the atomic diagram $P(x,y)$ and if $P_{t}(x,y)$ is defined then let $P_{t+1}$ be the atomic diagram $P_{t}(x,y)\wedge R_{i,j}^t(x,y) \wedge R_{i,j}^t(y,x)$. Let $\delta_{i,j}' = \delta_{i,j} \cup \{P_1,\ldots,P_{r-|\delta_{i,j}|}\}$ for $i\leq j$ but if $i>j$ let $\delta_{i,j}'$ be the set of reversed atomic diagrams in $\delta_{j,i}'$. Further add $Q(x,y) \wedge Q(x,x)$ to each atomic diagram in $\delta'_{i,j}$ if and only if $i=j$. 
\\\indent 
It is now clear that $|\delta_{i,j}'|= |\delta_{i',j'}'|$ for each $i,j,i',j'\in[l]$ and $\Delta'=\{\delta'_{i,j}\}_{i,j\in[l]}$ is a $(l,0,Q)-$compatible set. Proposition \ref{lawprop} now implies that there is a countable structure $\mcM'$ which satisfies $(\Delta', Q)-$extension properties, and thus $\mcM'\reduct V$ satisfies $(\Delta,\xi)-$extension properties which in turn implies that $\mcM\cong\mcM'\reduct V$. 
Let $\mbK_n = \{\mcA : A = [n], \exists f: \mcA \rightarrow \mcM' \text{ embedding}\}$ under the uniform measure $\mu_n$. 
As there are an equal amount of possible atomic diagrams between and inside $Q-$equivalence classes and each equivalence class is distinguished by some unique relational symbol it follows quickly that almost surely $\mcA\in \mbK_n$ will contain $l$ $Q-$equivalence classes with more than $\log (n)$ elements in each class. It is now straight forward to show that $\mcM'$ is a random structure in the same way as we showed the $0-1$ law of Proposition \ref{lawprop}.
\end{proof}
The following example describes a structure which satisfies all the assumption of Proposition \ref{reductprp} but is not a random structure. We may thus conclude that being a reduct of a random structure is the best we can get in general for such structures.
\begin{exa}\label{necessaryreduct}
Let V be the vocabulary $\{E_1,E_2,P\}$ where $E_1,E_2$ are binary and $P$ is unary. Let $\mcM$ be the $V-$structure consisting of the disjoint union of the structures $\mcG_1$ and $\mcG_2$ such that the relation $P$ holds for all elements in $\mcG_2$. The  countable structures $\mcG_1$ and $\mcG_2$ are models of the almost sure theory of the class consisting of all finite structures with two respectively one symmetric anti-reflexive relation under the uniform measure (hence $\mcG_2$ is the random graph). It is a quick exercise (which may use Theorem \ref{simpthm}) to show that $\mcM$ is $\omega-$categorical, simple with $SU-$rank 1 and $acl_\mcM(\emptyset)=\emptyset$. Let 
\[\mbK_n = \{\mcA : A = [n], \mcA \hookrightarrow \mcM\}\]
and let 
$\mbC_n = \{\mcN\in\mbK_n : \exists f:\mcN\rightarrow \mcG_1 \text{ embedding}\}$. Note that $|\mbC_n| = 4^{\binom{n}{2}}$. We may then calculate the proportion of structures in $\mbK_n$ which belong to $\mbC_n$:
\[\frac{|\mbC_n|}{|\mbK_n|} = 1 - \frac{\sum_{i=1}^n\binom{n}{i}|C_{n-i}|2^{\binom{i}{2}}}{|\mbK_n|} \geq 1 - \frac{\sum_{i=1}^{n}\binom{n}{i} 4^{\binom{n}{2}}2^{\binom{i}{2}}}{4^{\binom{n}{2}}} =  \]
\[1- \sum_{i=1}^n \binom{n}{i}4^{(-2in + i^2+i)/2} 2^{(i^2-i)/2} \geq 1- \sum_{i=1}^n \binom{n}{i}4^{(-in)/2} 2^{(in)/2} \geq \]
\[ 1- \sum_{i=1}^n \binom{n}{i} \left(\sqrt{\frac{1}{2}}\right)^{in}  \geq 1- \frac{1}{n}\sum_{i=1}^n\binom{n}{i}n^{-i}  = 1- \frac{1}{n} \big((1+ 1/n)^n-1\big )\]
which tends to 1 as $n$ tends to infinity. Thus almost surely $\mbK_n$ equipped with the uniform measure will only contain substructures of $\mcG_1$. Hence $\mcM\not\models T_\mbK$ because the theories $Th(\mcM)$ and $T_\mbK$ satisfy different extension axioms. We conclude that $\mcM$ is not a random structure.
\end{exa}
The previous example is a quite small and easy case. If we would have multiple equivalence classes and multiple atomic diagrams between them then the calculations would be considerably harder. We leave for future combinatoric research to deduce exactly which of the structures of Proposition \ref{reductprp} are random structures and which are just reducts of such. \\\indent
The sets $\mbK_n$ in Proposition \ref{lawprop} are constructed in a specific way, taking care that all $l$ equivalence classes are nonempty and express all the possible atomic diagrams of the spanning formula. The reason for taking such caution, and the reason why we assumed $acl(\emptyset)=\emptyset$ in Theorem \ref{reductprp}, is that $acl_\mcM(\emptyset)$ will almost surely disappear from the set of embeddable structures, unless $\mcM$ is $\omega-$stable. 
\begin{prp}\label{stablerngstr}Let $\mcM$ be a structure which is $\omega-$categorical, simple, with $SU-$rank 1 and with trivial pregeometry such that $acl_\mcM(\emptyset)\neq \emptyset$. If $\mcM$ is a random structure then $\mcM$ is $\omega-$stable. 
\end{prp}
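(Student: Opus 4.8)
The plan is to argue by contraposition: assuming $\mcM$ is a random structure with $acl_\mcM(\emptyset)\neq\emptyset$ but \emph{not} $\omega-$stable, I will produce a $V-$sentence that holds in $\mcM$ yet has asymptotic probability $0$ in the associated class $\mbK$, contradicting $\mcM\models T_\mbK$. First I would fix $\mcM\models T$ together with the restricted equivalence relation $\xi$ defined by $tp(x/acl_{\mcM^{eq}}(\emptyset))=tp(y/acl_{\mcM^{eq}}(\emptyset))$, having $l$ classes of which $t\geq 1$ are base sets (singletons) precisely because $acl_\mcM(\emptyset)\neq\emptyset$. By Corollary \ref{simpcor} the $(\xi,\Delta)-$extension axioms axiomatize $Th(\mcM)$, and since $\mcM$ is random and $T_\mbK$ is complete, $T_\mbK=Th(\mcM)$. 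Because $\mcM$ is not $\omega-$stable, Corollary \ref{stablethm} supplies two \emph{infinite} classes $i_0,j_0$ with $|\delta_{i_0,j_0}|\geq 2$, i.e.\ genuine freedom between them, whereas every base class $b$ satisfies $|\delta_{i,b}|=1$ for all $i$ (two elements of a common class share their type over $acl_\mcM(\emptyset)$); thus a base element has a completely forced atomic diagram to the rest of the structure.

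The combinatorial heart, generalizing the computation in Example \ref{necessaryreduct}, is to show that almost surely no structure in $\mbK_n=\{\mcA:A=[n],\ \mcA\hookrightarrow\mcM\}$ contains the image of a base element. The mechanism is a weighing argument: a point sent to a base class contributes a factor $1$ to the number of admissible atomic diagrams, since every relation to it is forced, whereas re-assigning that same point to class $i_0$ multiplies the admissible diagrams to the class-$j_0$ points by at least $2$ each. Lower-bounding $|\mbK_n|$ by the base-free structures with a balanced assignment to the infinite classes gives $|\mbK_n|\geq 2^{\Omega(n^2)}$, while bounding the base-containing structures by deleting the at most $t$ base points and re-attaching them shows $\#\{\mcA\in\mbK_n:\mcA\text{ uses a base element}\}\leq 2^{-\Omega(n)}|\mbK_n|$: the forced attachment of a base point costs the $2^{\Omega(n)}$ freedom it would have enjoyed against the linearly many class-$j_0$ points. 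Hence this fraction tends to $0$.

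To turn this into a contradiction I would choose a $V-$sentence $\chi$ asserting the existence of a base element and show $\lim_n\mu_n(\chi)=0$. Since $\xi$ is $\emptyset-$definable (Fact \ref{eqfact}) and a base element relates uniformly to each $\xi-$class, one can take $\chi=\exists x\,\beta(x)$ with $\beta$ a universal formula true in $\mcM$ exactly of the elements exhibiting such a rigid, uniform diagram. A base element present in $\mcA$ satisfies $\beta$, as universal formulas are preserved under passing to substructures, and by the previous paragraph such points are almost surely absent. For any other point $v$, the failure of $\beta$ is witnessed by finitely many generic elements which, thanks to the freedom $|\delta_{i_0,j_0}|\geq 2$, appear in the random $\mcA$ with probability $1-e^{-\Omega(n)}$, so a union bound over the at most $n$ candidates shows that almost surely no $v$ satisfies $\beta$. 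Thus $\mu_n(\chi)\to 0$, giving $\neg\chi\in T_\mbK=Th(\mcM)$ while $\mcM\models\chi$, a contradiction; therefore $\mcM$ must be $\omega-$stable.

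The main obstacle is twofold. Analytically, the count of the second paragraph has to be performed uniformly over the possibly many infinite classes, including ``stable'' classes on which $|\delta_{i,j}|=1$, so that the leading exponents of the lower bound on $|\mbK_n|$ and the upper bound on the base-containing structures genuinely match; the safe route is the weighing argument above rather than a comparison of crude absolute bounds, whose exponents need not agree. Conceptually, the delicate point is that membership in a base class is \emph{not} quantifier-free detectable and truth is not preserved under substructure, so reducing ``$\mbK_n$ uses no base element'' to a statement about one first-order sentence requires the careful choice of a universal $\beta$ together with the genericity union bound; verifying that no infinite class spuriously satisfies $\beta$ is exactly where the hypothesis that $\mcM$ fails to be $\omega-$stable, hence has a truly free pair of classes, is used.
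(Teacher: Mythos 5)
Your overall strategy is exactly the paper's proof: argue by contradiction, use Theorem \ref{simpthm} and Corollary \ref{stablethm} to extract from non-$\omega-$stability a pair of classes with at least two admissible atomic diagrams, observe that a base element has all of its diagrams forced, and then run the weighing argument in which each structure containing a (rigid) base point is outweighed by $2^{\Omega(n)}$ structures obtained by re-attaching that point freely. That combinatorial core is sound, and the paper is no more detailed than you are about the multiplicity bookkeeping of the replacement map. The gap is in your third paragraph, where you reduce the combinatorics to a first-order contradiction. You require a \emph{universal} formula $\beta$ true in $\mcM$ of exactly the base elements, and you need every non-base point of a random $\mcA$ to fail $\beta$ with high probability. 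Neither step is justified. The existence of such a $\beta$ is only asserted: a base element is separated from the elements of an infinite class by its type, but since membership in a $\xi-$class is not quantifier-free detectable (as you yourself note), that separation need not be expressible by a universal formula whose failure has quantifier-free witnesses. Moreover, your union bound covers only those points whose failure of $\beta$ is witnessed through the free pair $(i_0,j_0)$: a non-base point lying in a rigid infinite class (all diagrams to all classes forced), or in a free class whose available diagrams happen to coincide with diagrams that $a$ also realizes, is not reached by that genericity argument, and such a point could satisfy $\beta$ in $\mcA$ with high probability, destroying the claim $\mu_n(\chi)\to 0$.

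The paper avoids this machinery entirely, and so can you: since $\mcM$ is a random structure, $\mbK$ has a $0-1$ law, so $T_\mbK$ is complete, and $\mcM\models T_\mbK$ gives $T_\mbK=Th(\mcM)$. Hence \emph{every} sentence of $Th(\mcM)$ is almost surely true in $\mbK$; the transfer goes through the almost sure theory, not through embeddings, so no preservation under substructures is needed and $\beta$ can be an arbitrary formula (indeed the paper works directly with $\xi$). In particular the full first-order description of the $\xi-$configuration transfers: almost surely $\xi$ defines an equivalence relation with a singleton class whose element has exactly $a$'s forced diagrams to the other classes, and with a class realizing several diagrams that contains at least $f(n)$ elements for some $f(n)\to\infty$ (for each fixed $k$ the sentence ``at least $k$ elements'' is in $Th(\mcM)$, and a diagonalization gives $f$). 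The weighing argument then contradicts the almost sure existence of the singleton class directly: the same sentence is almost surely true by transfer and almost surely false by counting. So the fix is to discard the universal $\beta$ and the union bound over candidate points, and phrase the contradiction at the level of the transferred $\xi-$configuration, which is precisely what the paper does.
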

\begin{proof}
If $\mcM$ is not $\omega-$stable, then by Theorem \ref{simpthm} and Corollary \ref{stablethm} there exists a restricted equivalence relation $\xi$ with $l$ equivalence classes such that between some two equivalence classes, or inside one equivalence class, there are multiple possible atomic diagrams. We assume that the equivalence class $B$ contains multiple atomic diagrams inside of it. The calculations are similar (but slightly more technical) in the second case. Let $a\in acl(\emptyset)$ and assume that $(\mbK_n,\mu_n)_{n\in\mbbN}$ is as in the definition of a random structure. Since $\mcM$ is a random structure $\xi$ will almost surely define an equivalence relation where the atomic diagrams between some class with one element and the other classes are the same as $a$ has to the other classes, while one class will almost surely contain the same atomic diagrams as $B$.\\\indent 
The atomic diagram between any equivalence class and a base set is uniquely determined. As $\mcM$ is a random structure, almost surely in $\mbK_n$ there will be more than $f(n)$ elements in the $\xi-$equivalence class with the same atomic diagrams as in $B$, for some increasing function $f$ and only one element in the $\xi-$equivalence class approximating $a$. For each $\mcA\in \mbK_n$ which contain an element approximating $a$ there almost surely exists $2^{f(n)}$ structures not approximating $a$. Thus almost surely $\mcA\in\mbK_n$ will not contain an equivalence class which correspond to $a$. This is a contradiction.
\end{proof} 
\begin{rmk}
Definition \ref{rngstrdef} could have been made in another way to include a wider range of structures. If we had instead done the definition considering $\mbK_n= \{\mcA : A=[n], \exists f: \mcA\rightarrow \mcM \text{ embedding and } acl_\mcM(\emptyset) \subseteq \im(f) \}$ then Proposition \ref{stablerngstr} would no longer hold, Theorem \ref{reductprp} would be possible to generalize and Example \ref{nonrigidexa} would give a random structure. Do however note that Example \ref{necessaryreduct} would still be viable, and thus we may not even in this setting skip the reduct part of Theorem \ref{reductprp}.
\end{rmk}
Just as we in Section \ref{smsection} got corollaries regarding the strongly minimal theories we get, in this section, the following corollary which sums up how the strongly minimal $\omega-$categorical structures may be approximated.

\begin{cor}\label{smlastcor}Let $V$ be any finite relational vocabulary. Assume that $T$ is $\omega-$categorical and strongly minimal. For countable $\mcM\models T$ the following are equivalent:
\begin{itemize}
\item[(i)] $\mcM$ consists of an indiscernible set over a finite tuple.
\item[(ii)] The algebraic closure in $\mcM$ is trivial.
\item[(iii)] $\mcM$ is a random structure.
\end{itemize}
\end{cor}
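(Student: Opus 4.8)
The plan is to prove Corollary \ref{smlastcor} by combining the strongly-minimal characterization in Corollary \ref{smcor} with the random-structure results of this section. Since $T$ is $\omega$-categorical and strongly minimal, Corollary \ref{smcor} already gives us a tight grip on the structure: statements (i), (ii) here should correspond directly to clauses (iii), (ii)-flavored content of Corollary \ref{smcor}. So the bulk of the work is establishing the cycle of implications (i)$\Rightarrow$(ii)$\Rightarrow$(iii)$\Rightarrow$(i), using the heavy machinery (Proposition \ref{lawprop}, Theorem \ref{reductprp}, Proposition \ref{stablerngstr}) only where genuinely needed.

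For (i)$\Rightarrow$(ii): if $\mcM$ consists of an indiscernible set $I$ over a finite tuple $\bar c$, then by the strong minimality and Corollary \ref{smcor}(iii) the cofinite indiscernible set carries no algebraic information, so $acl_\mcM(\emptyset)$ can only consist of elements forced by the finite tuple. I would argue that in the strongly minimal $\omega$-categorical case the base sets of the restricted equivalence relation $\xi$ are exactly $acl(\emptyset)$ (as noted in the remark preceding Lemma \ref{simplma}), and that an indiscernible-set presentation leaves no room for a nonempty base set, forcing $acl_\mcM(\emptyset)=\emptyset$. For (iii)$\Rightarrow$(i): if $\mcM$ is a random structure then in particular it is $\omega$-categorical and strongly minimal by hypothesis, and Corollary \ref{smcor}(iii) hands us a cofinite $\emptyset$-definable indiscernible set over the (finite) remainder, which is exactly clause (i); alternatively one invokes Proposition \ref{stablerngstr} in contrapositive to see that a random structure here cannot have nonempty $acl(\emptyset)$, and then Corollary \ref{smcor} supplies the indiscernible-set shape.

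The substantive implication is (ii)$\Rightarrow$(iii): given trivial algebraic closure, I must produce the random-structure presentation. Here I would apply Theorem \ref{reductprp} to get that $\mcM$ is a reduct of a random structure, and then upgrade ``reduct of a random structure'' to ``random structure'' using the extra strength of strong minimality. The key point is that in the strongly minimal case Corollary \ref{smcor}(ii) forces $\xi$ to have a single infinite equivalence class with all non-base pairs sharing one atomic diagram, so each $\delta_{i,j}$ is a singleton; this is precisely the $\omega$-stable regime where, by Proposition \ref{stablerngstr} and the construction in the proof of Theorem \ref{reductprp}, no reduct is actually needed and the embeddable finite substructures under the uniform measure already satisfy the correct $(\xi,\Delta)$-extension properties almost surely. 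Concretely, I would rerun the counting estimate from the proof of Proposition \ref{lawprop}: with a unique atomic diagram between and inside classes and trivial $acl(\emptyset)$, almost surely an embeddable $\mcA$ of size $n$ realizes all extension axioms, giving the $0$-$1$ law and $\mcM\models T_\mbK$.

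The main obstacle I anticipate is the (ii)$\Rightarrow$(iii) step, specifically verifying that strong minimality genuinely removes the obstruction that forces the ``reduct'' qualifier in Theorem \ref{reductprp}. In the general simple $SU$-rank $1$ case, multiple atomic diagrams between classes mean that a randomly sampled finite substructure will overwhelmingly favor the richer classes and wash out the base sets (this is exactly the phenomenon exploited in Example \ref{necessaryreduct} and Proposition \ref{stablerngstr}). Strong minimality collapses the combinatorics: the single infinite class with a unique atomic diagram means there is essentially nothing to wash out, so the detour through $\mcM'$ and the added relational symbols $R^t_{i,j}$ in the proof of Theorem \ref{reductprp} becomes unnecessary. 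I would need to check carefully that with $acl_\mcM(\emptyset)=\emptyset$ there are \emph{no} base sets at all, so the degenerate $t=0$ case applies and $\mbK_n$ as defined via embeddings into $\mcM$ coincides (up to a measure-one event) with the uniform class of Proposition \ref{lawprop}; once that identification is made, the $0$-$1$ law and the identity $T_\mbK = Th(\mcM)$ follow from the already-established results rather than from fresh estimation.
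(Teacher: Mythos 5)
Your proposal has a central gap: you conflate ``trivial algebraic closure'' with $acl(\emptyset)=\emptyset$. In this paper triviality of the pregeometry means $acl(X)=X\cup acl(\emptyset)$, so $acl(\emptyset)$ may well be nonempty, and the finite tuple in clause (i) is essentially $acl(\emptyset)$. For example, a structure consisting of one point named by a unary predicate together with an infinite indiscernible set of unnamed points satisfies (i), (ii) \emph{and} (iii), yet $acl(\emptyset)\neq\emptyset$. This breaks your argument in two places. First, your (i)$\Rightarrow$(ii) asserts that an indiscernible-set presentation ``forces $acl_\mcM(\emptyset)=\emptyset$,'' which is simply false (though repairable, since (ii) only asks for triviality). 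Second, and fatally, your (ii)$\Rightarrow$(iii) rests on applying Theorem \ref{reductprp}, whose hypothesis is precisely $acl(\emptyset)=\emptyset$ --- not available from (ii) --- and on the claim that ``the degenerate $t=0$ case applies.'' When $t=0$ the statement is nearly vacuous ($|\mbK_n|=1$); the entire substance of (ii)$\Rightarrow$(iii) is the case $t>0$, where one must show that a uniformly random embeddable substructure with universe $[n]$ almost surely realizes all $t$ base points. That is exactly the point where $\omega$-stability (unique atomic diagrams, hence polynomial rather than exponential counting) defeats the wash-out phenomenon of Example \ref{necessaryreduct}, and you have assumed it away. Note also that Proposition \ref{stablerngstr} is a necessary condition (random plus $acl(\emptyset)\neq\emptyset$ implies $\omega$-stable), not a sufficient one, so it cannot be used ``in reverse'' to certify randomness.

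Your (iii)$\Rightarrow$(i) is likewise broken. To extract the cofinite indiscernible set from Corollary \ref{smcor} you need its clause (i), which includes trivial algebraic closure --- but that is statement (ii) of the present corollary, not available at that point in your cycle; strong minimality and $\omega$-categoricity alone do not suffice (an infinite-dimensional vector space over $\mathbb{F}_2$, presented relationally, is strongly minimal and $\omega$-categorical but is not an indiscernible set over a finite tuple). Your fallback via the ``contrapositive'' of Proposition \ref{stablerngstr} yields nothing: strongly minimal theories are already $\omega$-stable, so that proposition produces no contradiction and no conclusion about $acl(\emptyset)$. The paper's route here is different and is the idea you are missing: (iii)$\Rightarrow$(ii) follows from the divisibility Lemma \ref{cllemma}, exactly as in Proposition \ref{smlma} --- a random structure has $\mbK_n$ consisting of structures with universe $[n]$ for every $n$, so the closure increments $|cl(a_1,\ldots,a_k)|-|cl(a_1,\ldots,a_{k-1})|$ must divide numbers of every residue and hence equal $1$. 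Then (ii)$\Rightarrow$(i) is the elementary fact that in a strongly minimal theory independent tuples of equal length have the same type, and (i)$\Rightarrow$(iii) is where the construction of Proposition \ref{lawprop} and Theorem \ref{rngstrthm} is rerun. Finally, the corollary is stated for an arbitrary finite relational vocabulary, whereas Corollary \ref{smcor}, Theorem \ref{reductprp} and Proposition \ref{lawprop} are binary; any proof must pass through the generalization of Remark \ref{highervocrmk}, as the paper's does, and your proposal never addresses this.
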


\begin{proof}
(ii) implies (i) follows since in strongly minimal theories if $\mcM\models T$ and $\{a_1,\ldots,a_n\},\{b_1,\ldots,b_n\}\subseteq M$ are independent sets then $tp(a_1,\ldots,a_n)=tp(b_1,\ldots,b_n)$. (iii) implies (ii) follows from Lemma \ref{cllemma} in the same way as we used it in proving Proposition \ref{smlma}. (i) implies (iii) may be proved by using a generalization of Theorem \ref{rngstrthm} to the context of finite relational vocabularies in the context of $\omega-$stable structures in accordance to Remark \ref{highervocrmk}. By building the sets of structures like in Proposition \ref{lawprop} and the proof of Theorem \ref{rngstrthm} we may conclude that $|N|=n$ for $\mcN\in\mbK_n$.
\end{proof}

\textbf{Acknowledgement.} The author would like to thank Professor Vera Koponen of Uppsala University for valuable discussion and remarks.

\end{document}